\newcommand{\lcm}{\mathrm{lcm}}
\renewcommand{\l}{\left}
\renewcommand{\r}{\right}
\newcommand{\maru}[1]{{\ooalign{\hfil#1\/\hfil\crcr
\raise.167ex\hbox{\mathhexbox20D}}}}
\newcommand{\ruby}[2]{%
 \leavevmode
 \setbox0=\hbox{#1}%
 \setbox1=\hbox{\tiny #2}%
 \ifdim\wd0>\wd1 \dimen0=\wd0 \end{lemma}se \dimen0=\wd1 \fi
 \hbox{%
   \kanjiskip=0pt plus 2fil
   \xkanjiskip=0pt plus 2fil
   \vbox{%
     \hbox to \dimen0{%
       \tiny \hfil#2\hfil}%
     \nointerlineskip
     \hbox to \dimen0{\mathstrut\hfil#1\hfil}}}}
\newcommand{\la}{\langle}
\newcommand{\ra}{\rangle}
\newcommand{\Z}{\mathbb{Z}}
\newcommand{\C}{\mathbb{C}}
\newcommand{\R}{\mathbb{R}}
\newcommand{\Q}{\mathbb{Q}}
\newcommand{\g}{\mathfrak{g}}
\newcommand{\h}{\mathfrak{h}}
\newcommand{\Sym}{{\rm Sym}}
\newcommand{\Alt}{{\rm Alt}}
\newcommand{\rank}{\mathrm{rank}\,}
\newcommand{\End}{\mathrm{End}}
\newcommand{\aut}{\mathrm{Aut}\,}
\newcommand{\Aut}{\mathrm{Aut}\,}
\renewcommand{\hom}{\mathrm{Hom}}
\newcommand{\GO}{\mathrm{GO}}
\newcommand{\PSO}{\mathrm{PSO}}
\newcommand{\AGL}{\mathrm{AGL}}
\newcommand{\Dih}{\mathrm{Dih}}
\newcommand{\Span}{\mathrm{Span}}
\newcommand{\vacuum}{\mathrm{1\hspace{-3.2pt}l}}
\newcommand{\vac}{\vacuum}
\newcommand{\irr}{\mathrm{Irr}}
\makeatletter \@addtoreset{equation}{section}
\theoremstyle{plain}
\newtheorem{theorem}{Theorem}[section]
\newtheorem{proposition}[theorem]{Proposition}
\newtheorem{lemma}[theorem]{Lemma}
\theoremstyle{definition}
\newtheorem{definition}[theorem]{Definition}
\theoremstyle{remark}
\newtheorem{remark}[theorem]{Remark}
\numberwithin{equation}{section}
\title[Automorphism groups of cyclic orbifold VOAs]{Automorphism groups of cyclic orbifold vertex operator algebras associated with the Leech lattice and some non-prime isometries}
 \subjclass[2010]{Primary  17B69}
 \keywords{Automorphism group, Cyclic orbifolds, Lattice vertex operator algebras, Leech lattice}
\author{Koichi Betsumiya}
  \address[K. Betsumiya] {Graduate School of Science and Technology,
Hirosaki University, Hirosaki 036-8561, Japan} 
  \email{betsumi@hirosaki-u.ac.jp}
\author{Ching Hung Lam} %
  \address[C. H. Lam] {Institute of Mathematics, Academia Sinica, Taipei 10617, Taiwan} 
  \email{chlam@math.sinica.edu.tw}
\author[H. Shimakura]{Hiroki Shimakura}%
\address[H. Shimakura]{Graduate School of Information Sciences, Tohoku University, Sendai, 980-8579, Japan }%
\email {shimakura@tohoku.ac.jp}%
\date{}
\thanks{C.H. Lam was partially supported by a research grant AS-IA-107-M02 of Academia Sinica  and MOST grant  107-2115-M-001-003-MY3  of Taiwan}
\thanks{H. Shimakura was partially supported by JSPS KAKENHI Grant Numbers JP19KK0065 and JP20K03505.}
\newcommand{\sfr}[2]{\leavevmode\kern-.1em
  \raise.5ex\hbox{\the\scriptfont0 #1}\kern-.1em
  /\kern-.15em\lower.25ex\hbox{\the\scriptfont0 #2}}
\begin{document}

\begin{abstract}We determine the automorphism groups of the cyclic orbifold vertex 
operator algebras associated with coinvariant lattices of isometries of the Leech lattice in the conjugacy classes $4C,6E,6G,8E$ and $10F$.
As a consequence, we have determined the automorphism groups of all the $10$ vertex operator algebras in \cite{Ho2}, which are useful to analyze holomorphic vertex operator algebras of central charge $24$. 
\end{abstract}

\maketitle


\section{Introduction}

The classification of holomorphic vertex operator algebras (VOAs) of central charge $24$ has been completed except for the uniqueness of the moonshine VOA, and  several uniform proofs are proposed (see \cite{Ho2,MS,HM,ELMS,CLM} and the references therein).
One of them, proposed by H\"ohn in \cite{Ho2}, is to view a holomorphic VOA $V$ of central charge $24$ with $V_1\neq0$ as a simple current extension of the tensor product VOA $V_{L_{\mathfrak {g}}} \otimes V_{\Lambda_g}^{\hat{g}}$.
Here $V_{L_{\mathfrak{g}}}$ is the lattice VOA associated with a lattice $L_\mathfrak{g}$ related to the root lattice of $\g=V_1$, $g$ is some isometry of the Leech lattice $\Lambda$ and $V_{\Lambda_g}^{\hat{g}}$ is the cyclic orbifold VOA associated with the coinvariant lattice $\Lambda_g$ and a lift $\hat{g}\in\Aut(V_{\Lambda_g})$ of the restriction of $g$ to $\Lambda_g$.
H\"ohn also described the possible isometries $g$; there are $10$ different cases \cite[Table 4]{Ho2}. 

These $10$ VOAs $V_{\Lambda_g}^{\hat{g}}$ would be useful to analyze the holomorphic VOAs of central charge $24$ .
One fundamental question is to determine their full automorphism groups.
When $|g|$ is a prime, i.e, the conjugacy class of $g$ is $2A$, $2C$, $3B$, $5B$ or $7B$, the full automorphism group of $V_{\Lambda_{\mathfrak{g}}}^{\hat{g}}$ has been determined in \cite{Gr98,Sh04,CLS, Lam18b, LamCFT} as in Table \ref{Table:main}.
There are still $5$ remaining non-prime cases and the full automorphism groups of $V_{\Lambda_g}^{\hat{g}}$ have not been determined yet for the conjugacy classes $4C, 6E,6G,8E$ and $10F$.

Recently, a criterion for the existence of extra automorphisms in $\Aut(V_L^{\hat{g}})$, i.e, not induced from $\Aut(V_L)$,  is given in \cite{LS21}.
One of the results in \cite{LS21} is that $V_L^{\hat{g}}$ has an automorphism which sends an irreducible $V_L^{\hat{g}}$-submodule of $V_L$ to an irreducible $V_L^{\hat{g}}$-module of twisted type if and only if $L$ is a coinvariant sublattice $\Lambda_g$ of certain isometry $g$ of the Leech lattice $\Lambda$ under the assumptions that $|g|$ is prime and $L$ has no roots.
Hence it would be interesting to classify non-prime order isometries $g$ of $\Lambda$ such that $V_{\Lambda_g}^{\hat{g}}$ have the same property; in fact, the property holds if $g$ belongs to the $5$ conjugacy classes $4C, 6E,6G,8E$ and $10F$ (see Remark \ref{R:twist}).

In this article, we determine the full automorphism groups of $V_{\Lambda_g}^{\hat{g}}$ for these $5$ cases as in Table \ref{Table:main} (see \cite{ATLAS} for the notations of groups). 

\begin{longtable}{|c|c|c|c|}
\caption{Automorphism groups $\Aut(V_{\Lambda_g}^{\hat{g}})$}\label{Table:main}
\\ \hline 
Class& $\irr(V_{\Lambda_g}^{\hat{g}})$&$\Aut(V_{\Lambda_g}^{\hat{g}})$ & Reference \\ \hline
$2A$&$2^{10}$&$\GO_{10}^+(2)$&\cite{Gr98,Sh04}\\
$2C$&$2^{10}4^2$&$2^{11}.(2^{10}.(\Sym_{12}.\Sym_3$))&\cite{Sh04}\\
$3B$&$3^8$&$\Omega_8^-(3).2$&\cite{CLS}\\
$4C$& $2^24^6$&$2^{21}.\GO_7(2)$& Theorem \ref{T:main} \\
$5B$&$5^6$&$\Omega_6^+(5).2$&\cite{Lam18b}\\
$6E$&$2^63^6$&$\GO_6^+(2)\times \GO_6^+(3)$& Theorem \ref{T:main}\\ 
$6G$&$2^44^23^5$&$ 2^{5+4}.(\Sym_3 \times \Sym_3 \times \Sym_3) \times \Omega_5(3).2.$& Theorem \ref{T:main}\\
$7B$& $7^5$&$\Omega_5(7).2$&\cite{LamCFT}\\
$8E$& $2.4.8^4$&$2^{11+9}.\Sym_6$& Theorem \ref{T:main} \\
$10F$&$2^24^25^4$&$(Q_8:2^2).(2\times \Sym_3)\times \GO_4^+(5)$& Theorem \ref{T:main}\\
\hline 
\end{longtable}

Our method is similar to that in \cite{Sh04,Lam18b,LamCFT}. The main idea is to study the orbit
of the irreducible $V_{\Lambda_g}^{\hat{g}}$-submodule $V_{\Lambda_g}(1)$ of $V_{\Lambda_g}$ under the conjugate actions of $\Aut(V_{\Lambda_{g}}^{\hat{g}})$ (cf. Definition \ref{Mconj} and \eqref{Eq:conjact}), where  $V_{\Lambda_g}(j)=\{ v\in V_{\Lambda_g}\mid \hat{g}(v)= e^{2\pi \sqrt{-1} j/n} v\}$ and $0\leq j\leq |\hat{g}|-1$. 

Let $g\in O(\Lambda)$ be an isometry of the Leech lattice $\Lambda$. 
We assume that the conjugacy class of $g$ is $4C, 6E,6G,8E$ or $10F$.
Then $g=id$ on $\Lambda_g^*/\Lambda_g$ and by \cite{La19}, all irreducible $V_{\Lambda_{g}}^{\hat{g}}$-modules are simple current modules. It implies that the set $\irr(V_L^{\hat{g}})$ of  
  all (isomorphism classes of) irreducible $V_L^{\hat{g}}$-modules  forms a (finite) abelian group under the fusion product. In this case, $(\irr(V_L^{\hat{g}}), q)$ has a non-degenerate quadratic space structure  \cite[Theorem 3.4]{EMS} with $q$ defined by 
\begin{equation*}
q:\irr(V_L^{\hat{g}})\to \Q/\Z,\quad M\mapsto \varepsilon(M)\pmod\Z,
\end{equation*}
where $\varepsilon(M)$ is the conformal weight of $M$.
Since the action of $\aut(V_{\Lambda_g}^{\hat{g}})$ on $\irr(V_{\Lambda_g}^{\hat{g}})$ by conjugation preserves the conformal weights and the fusion products, it also preserves the quadratic space structure.
Hence there exists a natural group homomorphism $\mu: \aut(V_{\Lambda_g}^{\hat{g}}) \to O(\mathrm{Irr}(V_{\Lambda_g}^{\hat{g}}))$, where $O(\mathrm{Irr}(V_{\Lambda_g}^{\hat{g}}))$ is the orthogonal group of the quadratic space $(\mathrm{Irr}(V_{\Lambda_g}^{\hat{g}}), q)$. For the cases that we are interested in, it turns out that the natural map $\mu$ is injective. Therefore, we may consider $\aut(V_{\Lambda_g}^{\hat{g}})$ as a subgroup of $O(\mathrm{Irr}(V_{\Lambda_g}^{\hat{g}}))$. 

Since $V_{\Lambda_g}^{\hat{g}}$ is the fixed-point subspace of $\hat{g}$ on $V_{\Lambda_g}$, the centralizer $C_{\Aut(V_{\Lambda_g})}(\hat{g})$ acts on $V_{\Lambda_g}^{\hat{g}}$ by restriction with the kernel $\langle \hat{g}\rangle$.
The group $C_{\Aut(V_{\Lambda_g})}(\hat{g})/ \langle \hat{g}\rangle$ is also the pointwise stabilizer of $\{ V_{\Lambda_g}(i) \mid 0 \leq i\leq |\hat{g}|-1\}\   ( \subset \irr( V_{\Lambda_g}^{\hat{g}}))$ (cf. Proposition \ref{P:stabVL1}).
Note that $C_{\Aut(V_{\Lambda_g})}(\hat{g})$ can be computed since $\Aut(V_{\Lambda_g})$ is described in \cite{DN}.

First, to determine the automorphism group, we have to know if $\aut(V_{\Lambda_g}^{\hat{g}})$ has {extra} automorphisms.
A general criteria that $\aut(V_{L}^{\hat{g}})$ has extra automorphisms, along with an explicit construction, is given in \cite{LS21}.
It turns out that the 
coinvariant lattice $\Lambda_g$ satisfies this criteria when $g\in 4C, 6E,6G,8E$ or $10F$ (see Proposition \ref{P:Nc}) and $V_{\Lambda_g}^{\hat{g}}$ has extra automorphisms. 
In addition, we describe their conjugate actions on some elements of $\irr( V_{\Lambda_g}^{\hat{g}})$.

Next, we will determine the orbit
of $V_{\Lambda_g}(1)$ under the conjugate actions of $\Aut(V_{\Lambda_{g}}^{\hat{g}})$. 
We consider a certain subset $\mathcal{S}_g$ of $\irr(V_{\Lambda_{g}}^{\hat{g}})$ which consists of singular elements satisfying the same properties as $V_{\Lambda_g}(1)$ (see \eqref{Eq:Sg} and \eqref{Eq:Sg2}).  By using the extra automorphisms defined in \cite{LS21}, we prove that  $\Aut(V_{\Lambda_{g}}^{\hat{g}})$ acts transitively on $S_g$ (see Propositions \ref{P:tranSg} and \ref{6GS}). Therefore, $\Aut(V_{\Lambda_{g}}^{\hat{g}})$ is a small index subgroup of the full orthogonal group.  By using the structure of the full orthogonal group and computer calculations by MAGMA (\cite{MAGMA}), we determine the shape of $\Aut(V_{\Lambda_{g}}^{\hat{g}})$. 

The organization of the article is as follows. In Section 2, we recall some basic notions and properties about integral lattices and lattice VOAs. We also recall some facts about the automorphism groups of lattice VOAs. In Section 3, we review the structures of irreducible $V_L^{\hat{g}}$-modules and determine the conjugates of irreducible $V_L^{\hat{g}}$-modules by some elements of $C_{\Aut(V_L)}( \hat{g})$. 
In Section \ref{S:extra}, we will recall from \cite{LS21} a general criteria 
for which the orbifold VOA $V_L^{\hat{g}}$ contains some extra automorphisms. In Section 5, we study the coinvariant lattices $\Lambda_g$ for $g\in 4C,6E,6G,8E$ and $10F$ and discuss some of their properties. In Section 6, 
we show that $\Aut(V_{\Lambda_{g}}^{\hat{g}})$ acts transitively on  certain subsets of $\irr(V_{\Lambda_g}^{\hat{g}})$ and determine the automorphism group 
$\Aut(V_{\Lambda_{g}}^{\hat{g}})$ for each case.

\section{Preliminary} 
We first recall some notions and properties about integral lattices and lattice vertex operator algebras. 

\subsection{Lattices}\label{S:lattice}

Let $L$ be a lattice with the positive-definite bilinear form $(\cdot | \cdot )$. We 
denote its isometry group by $O(L)$. 
The dual lattice $L^*$ of $L$ is defined to be the lattice
\[
L^*=\{ \alpha\in \Q\otimes_\Z L\mid ( \alpha| L)\subset \Z \}.
\]

Assume that $L$ is even, equivalently, $(\alpha|\alpha)\in2\Z$ for all $\alpha\in L$.
Then $L$ is integral, that is, $L< L^*$, and the discriminant group $\mathcal{D}(L)$ is defined to 
be the quotient group $L^*/L$, which is a 
finite abelian group. 
Let   
\begin{equation}\label{Eq:qL}
q:\mathcal{D}(L)\to \Q/\Z,\quad \alpha+L\mapsto \frac{(\alpha|\alpha)}{2}+\Z. 
\end{equation} 
Then $q$ defines a quadratic form on $\mathcal{D}(L)$.
The associated bilinear form on $\mathcal{D}(L)$ given by $(\alpha+L|\beta+L)=(\alpha|\beta)+\Z$ is non-degenerate, and hence $(\mathcal{D}(L),q)$ is a non-degenerate quadratic space.

Let $$O(\mathcal{D}(L),q)=\{f\in \Aut(\mathcal{D}(L))\mid q(f(\alpha+L))=q(\alpha+L)\ \text{for all } \alpha+L\in \mathcal{D}(L)\}$$ 
be the orthogonal group of $(\mathcal{D}(L),q)$.
Then $O(L)$ acts on $\mathcal{D}(L)$ as a subgroup of $O(\mathcal{D}(L),q)$ but this action is not necessarily faithful.
We often denote $O(\mathcal{D}(L),q)$ by $O(\mathcal{D}(L))$ for short.

The \emph{(square) norm} of a vector $v\in \Q\otimes_\Z L$ is defined to be the value $(v|v)$.
For $U\subset \Q\otimes_\Z L$ and $s\in \Q$, we define
\[
  U(s)=\{ \alpha\in U\mid (\alpha|\alpha) =s\}
\]
to be the set of all norm $s$ vectors in $U$.
We also define the minimum norm of $U$ by $$\min(U)=\min\{(\alpha|\alpha)\mid \alpha \in U \}.$$
We call a vector of norm $2$ in an even lattice a 
\emph{root}. 
Note that the set of roots of an even lattice forms a root system.

Let $g\in O(L)$ of order $n$.
The fixed-point sublattice $L^g$ of $g$ and the \emph{coinvariant lattice} $L_g$ of $g$ are defined to be 
\begin{equation}
L^g=\{\alpha\in L\mid g\alpha=\alpha\}\quad \text{ and } \quad L_g = \{ \alpha\in L \mid (\alpha| \beta) =0 \text{ for all } \beta\in L^g\}.\label{Eq:coinv}
\end{equation}
Clearly $$\rank L=\rank L^g+\rank L_g.$$
Notice that the restriction of $g$ to $L_g$ is a fixed-point free isometry of order $n$; we also denote it by $g$ without confusion.

\subsection{VOAs, modules and automorphisms}

A \emph{vertex operator algebra} (VOA) $(V,Y,\vac,\omega)$ is a $\Z$-graded vector space $V=\bigoplus_{i\in\Z}V_i$ over the complex field $\C$ equipped with a linear map
$$Y(a,z)=\sum_{i\in\Z}a_{(i)}z^{-i-1}\in ({\rm End}\ V)[[z,z^{-1}]],\quad a\in V,$$
the \emph{vacuum vector} $\vac\in V_0$ and the \emph{conformal vector} $\omega\in V_2$
satisfying certain axioms (\cite{Bo,FLM}). 
The operators $L(m)=\omega_{(m+1)}$, $m\in \Z$, satisfy the Virasoro relation:
$$[L{(m)},L{(n)}]=(m-n)L{(m+n)}+\frac{1}{12}(m^3-m)\delta_{m+n,0}c\ {\rm id}_V,$$
where $c\in\C$ is called the \emph{central charge} of $V$.

For a VOA $V$, a $V$-\emph{module} $(M,Y_M)$ is a $\C$-graded vector space $M=\bigoplus_{i\in\C} M_{i}$ equipped with a linear map
$$Y_M(a,z)=\sum_{i\in\Z}a_{(i)}z^{-i-1}\in (\End\ M)[[z,z^{-1}]],\quad a\in V$$
satisfying a number of conditions (\cite{FHL,DLM2}).
We often denote it by $M$.
If $M$ is irreducible, then there exists $\varepsilon(M)\in\C$ such that $M=\bigoplus_{m\in\Z_{\geq 0}}M_{\varepsilon(M)+m}$ and $M_{\varepsilon(M)}\neq0$; the number $\varepsilon(M)$ is called the \emph{conformal weight} of $M$.
Let $\irr(V)$ denote the set of all isomorphism classes of irreducible $V$-modules.
We often identify an element in $\irr(V)$ with its representative.
Assume that the fusion products $\boxtimes$ are defined on $\irr(V)$ (\cite{HL}).
An irreducible $V$-module $M^1$ is called a \emph{simple current module} if for any irreducible $V$-module $M^2$, the fusion product $M^1\boxtimes M^2$ is also an irreducible $V$-module.

Let $V$ be a VOA.
A linear automorphism $\tau$ of $V$ is called a (VOA) \emph{automorphism} of $V$ if $$ \tau\omega=\omega\quad {\rm and}\quad \tau Y(v,z)=Y(\tau v,z)\tau\quad \text{ for all } v\in V.$$
We denote the group of all automorphisms of $V$ by $\Aut(V)$. 

\begin{definition}\label{Mconj} (\cite{DLM2})
Let $V$ be a VOA and $\tau\in\Aut(V)$. For $g\in\Aut(V)$, let $M=(M,Y_M)$ be a $g$-twisted $V$-module.
The \emph{$\tau$-conjugate} $(M\circ \tau, Y_{M\circ \tau} (\cdot, z))$ of $M$ is defined as follows:
\begin{equation}
\begin{split}
& M\circ \tau =M \quad \text{ as a vector space;}\\
& Y_{M\circ \tau} (a, z) = Y_M(\tau a, z)\quad \text{ for any } a\in V.
\end{split}\label{Eq:conjact}
\end{equation}
Then $(M\circ \tau, Y_{M\circ \tau} (\cdot, z))$ defines a $\tau^{-1} g\tau$-twisted $V$-module.
\end{definition}

The $\tau$-conjugation defines an $\Aut(V)$-action on the isomorphism classes of twisted $V$-modules.  
It also does an $\Aut(V)$-action on $\irr(V)$.
Since this conjugation preserves the characters and the fusion products of $V$-modules, we obtain the following lemma.
\begin{lemma}\label{Lem:conj} Let $M,M^1,M^2$ be $V$-modules and let $\tau\in\Aut(V)$.
\begin{enumerate}[{\rm (1)}]
\item If $M$ is irreducible, then so is $M\circ \tau$.
In addition, both $M$ and $M\circ \tau$ have the same conformal weight.
\item Assume that the fusion product is defined on $V$-modules.
Then $(M^1\circ \tau)\boxtimes (M^2\circ \tau)\cong (M^1\boxtimes M^2)\circ \tau$.
\end{enumerate}
\end{lemma}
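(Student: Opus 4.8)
The plan is to reduce both statements to the single observation that $\tau$ preserves the entire vertex-algebraic structure of $V$, so it carries the defining axioms of submodules and of intertwining operators to the corresponding axioms for the $\tau$-conjugated modules.

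\medskip

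\textbf{Proof of (1).} I would first compare the submodule lattices of $M$ and $M\circ\tau$. A subspace $N\subseteq M$ is a $V$-submodule of $M\circ\tau$ precisely when it is stable under all component operators of $Y_{M\circ\tau}(a,z)=Y_M(\tau a,z)$, that is, under $(\tau a)_{(i)}$ acting on $M$, for every $a\in V$ and $i\in\Z$. Since $\tau$ is a bijection of $V$, as $a$ ranges over $V$ the element $\tau a$ also ranges over all of $V$; hence the family of operators $\{(\tau a)_{(i)}\mid a\in V,\ i\in\Z\}$ coincides with $\{b_{(i)}\mid b\in V,\ i\in\Z\}$. Therefore $N$ is a $V$-submodule of $M\circ\tau$ if and only if it is a $V$-submodule of $M$, and in particular $M\circ\tau$ is irreducible if and only if $M$ is. For the conformal weight I would use $\tau\om=\om$: this gives $Y_{M\circ\tau}(\om,z)=Y_M(\tau\om,z)=Y_M(\om,z)$, so the Virasoro operators, and in particular $L(0)=\om_{(1)}$, act on $M\circ\tau$ exactly as they do on $M$. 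Consequently $M$ and $M\circ\tau$ have the same $L(0)$-eigenspace decomposition, and hence $\varepsilon(M\circ\tau)=\varepsilon(M)$ (this was already noted after Definition \ref{Mconj}).

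\medskip

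\textbf{Proof of (2).} Here I would invoke the universal property characterizing the fusion product from \cite{HL}: $M^1\boxtimes M^2$ comes equipped with a canonical intertwining operator of type $\binom{M^1\boxtimes M^2}{M^1\ M^2}$ which is universal among intertwining operators out of the ordered pair $(M^1,M^2)$. The key point is that a map $\mathcal{Y}$ is an intertwining operator of type $\binom{M^3}{M^1\ M^2}$ if and only if the very same map is an intertwining operator of type $\binom{M^3\circ\tau}{(M^1\circ\tau)\ (M^2\circ\tau)}$: the defining conditions (the Jacobi identity relating $\mathcal{Y}$ to $Y_V$, $Y_{M^1}$, $Y_{M^2}$, $Y_{M^3}$, together with the $L(-1)$-derivative property) are left invariant when each of $Y_V$ and the $Y_{M^i}$ is replaced simultaneously by $Y_V(\tau\,\cdot\,,z)$ and $Y_{M^i}(\tau\,\cdot\,,z)$, precisely because $\tau$ is a VOA automorphism of $V$. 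Thus $\tau$-conjugation gives a bijection, natural in the third slot, between intertwining operators of type $\binom{M^3}{M^1\ M^2}$ and of type $\binom{M^3\circ\tau}{(M^1\circ\tau)\ (M^2\circ\tau)}$. Applying this to the canonical intertwining operator attached to $M^1\boxtimes M^2$ shows that $(M^1\boxtimes M^2)\circ\tau$, equipped with that same operator, satisfies the universal property defining $(M^1\circ\tau)\boxtimes(M^2\circ\tau)$; by uniqueness of the fusion product we conclude $(M^1\circ\tau)\boxtimes(M^2\circ\tau)\cong(M^1\boxtimes M^2)\circ\tau$.

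\medskip

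The only real obstacle is bookkeeping rather than mathematics: one must consistently exploit the identification "$M\circ\tau=M$ as vector spaces'' so that an intertwining operator for one triple of modules is literally the same linear map as an intertwining operator for the conjugated triple, and then check that universality transfers across this identification. No genuinely new idea is required beyond the fact that $\tau$ respects the full vertex-algebraic structure of $V$.
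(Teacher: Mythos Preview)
Your argument is correct. The paper itself does not give a proof of this lemma at all: it simply asserts, in the sentence preceding the statement, that ``this conjugation preserves the characters and the fusion products of $V$-modules'' and treats the lemma as an immediate consequence. So there is nothing to compare on the level of proof strategy; you have supplied the details that the paper omits.

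If anything, your write-up is more careful than what the paper's one-line justification strictly supports. For (1) you argue directly via the submodule lattice and the identity $\tau\omega=\omega$, which is the cleanest route. For (2) you go through the universal property of the fusion product and the bijection of intertwining-operator spaces induced by $\tau$; this is exactly the standard way to make precise the slogan ``conjugation preserves fusion products.'' The paper's phrasing (``preserves the characters'') is really just shorthand for your observation that $L(0)$ acts identically on $M$ and $M\circ\tau$, hence the graded dimensions and in particular the conformal weights agree.
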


\subsection{Lattice VOAs and their automorphism groups}
In this section, we recall a few facts about lattice VOAs and their automorphism groups from \cite{FLM,DN, LY2}.

Let $L$ be a positive-definite even lattice and let $(\cdot |\cdot )$ be the positive-definite symmetric bilinear form on $\R\otimes_\Z L$.
Let $M(1)$ be the Heisenberg VOA associated with $\mathfrak{h}=\C\otimes_\Z L$. 
We  extended the form $(\cdot|\cdot)$  $\C$-bilinearly to $\mathfrak{h}$. 
Let $\C\{L\}=\bigoplus_{\alpha\in L}\C e^\alpha$ be the twisted group algebra such that $e^\alpha e^\beta=(-1)^{(\alpha|\beta)}e^{\beta}e^{\alpha}$, for  $\alpha,\beta\in L$.
The lattice VOA $V_L$ associated with $L$ is defined to be $M(1) \otimes \C\{L\}$. 

Let $\hat{L}$ be a central extension of $L$ associated with the commutator map $(\cdot|\cdot)\pmod2$ on $L$.
Let $\Aut(\hat{L})$ be the automorphism group of $\hat{L}$.
For $\varphi\in \Aut (\hat{L})$, we define the element $\bar{\varphi}\in\Aut(L)$ by $\varphi(e^\alpha)\in\{\pm e^{\bar\varphi(\alpha)}\}$, $\alpha\in L$.
Set $O(\hat{L})=\{\varphi\in\Aut(\hat L)\mid \bar\varphi\in O(L)\}.$
For $x\in\h$, we set 
\begin{equation}
\sigma_x=\exp(-2\pi\sqrt{-1}x_{(0)})\in\Aut(V_L).\label{Eq:sigma}
\end{equation}
Then $\{\varphi\in O(\hat{L})\mid \bar{\varphi}=id\}=\{\sigma_x\mid x\in ((1/2)L^*)/L^*\}.$
By \cite[Proposition 5.4.1]{FLM}, we have an exact sequence
\begin{equation}
  1 \to \{\sigma_x\mid x\in ((1/2)L^*)/L^*\} \to  O(\hat{L})\stackrel{-}\rightarrow  O(L) \to  1.\label{Eq:exact}
\end{equation}
Note that $\aut (V_L) = N(V_L)\,O(\hat{L})$ (\cite{DN}), where $N(V_L)=\l\la \exp(a_{(0)}) \mid a\in (V_L)_1 \r\ra$.
Note also that if $L(2)=\emptyset$, then $N(V_L)=\{\sigma_x\mid x\in\h/L^*\}$
and \[
  1 \to \{\sigma_x\mid x\in \h/L\} \to  \Aut(V_L)\stackrel{-}\rightarrow  O(L) \to  1
\] is an exact sequence.

An element $\phi\in O(\hat{L})$ is called a \emph{standard lift} of $g\in O(L)$ if $\bar{\phi}=g$ and $\phi(e^\alpha)=e^\alpha$ for $\alpha\in L^g$.
The following lemma can be found in \cite{EMS,LS19}.
\begin{lemma}\label{L:standardlift}
Let $g\in O(L)$ and let $\hat{g}\in O(\hat{L})$ be a standard lift of $g$.
\begin{enumerate}[{\rm (1)}]
\item All standard lifts of $g$ are conjugate by some elements in $\{\sigma_x\mid x\in\h\}$. 
\item If $g$ is fixed-point free or has odd order, then $|\hat{g}|=|g|$.
\end{enumerate}
\end{lemma}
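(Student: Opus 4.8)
The plan is to reduce both statements to explicit computations of how the automorphisms $\sigma_x$ (as in \eqref{Eq:sigma}) and a lift $\hat{g}$ act on the spanning vectors $e^\alpha$ of $V_L$, using $\sigma_x e^\alpha = e^{-2\pi\sqrt{-1}(x|\alpha)}e^\alpha$ and $\hat{g}e^\alpha\in\{\pm e^{g\alpha}\}$, together with the fact that $\sigma_x$ restricts to the identity, and $\hat{g}$ to the action of $g$, on the Heisenberg part $M(1)$. For \textbf{Part (1)} the key identity is
\[
\sigma_x\,\hat{g}\,\sigma_x^{-1}=\sigma_{(1-g)x}\,\hat{g}\qquad (x\in\mathfrak{h}),
\]
which one checks on $e^\alpha$ using $(x|\alpha)-(x|g\alpha)=((1-g)x|g\alpha)$ (both sides also agree on $M(1)$). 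Given two standard lifts $\hat{g}_1,\hat{g}_2$ of $g$, the exact sequence \eqref{Eq:exact} gives $\hat{g}_2\hat{g}_1^{-1}=\sigma_y$ for some $y\in(1/2)L^*$, and the standardness conditions $\hat{g}_i(e^\alpha)=e^\alpha$ for $\alpha\in L^g$ force $(y|\alpha)\in\Z$ for all $\alpha\in L^g$, i.e.\ the orthogonal projection $\pi(y)$ of $y$ onto $\R\otimes L^g$ lies in $(L^g)^*$. Since $L^g$ is a primitive sublattice of $L$, it is a direct summand, so every $\Z$-linear functional on $L^g$ extends to $L$; hence $(L^g)^*=\pi(L^*)$, and there is $\ell\in L^*$ with $\pi(\ell)=\pi(y)$. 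Then $y-\ell$ is orthogonal to $\R\otimes L^g$, hence lies in the image of $1-g$, on which $1-g$ is invertible, so $y-\ell=(1-g)x$ for some $x\in\mathfrak{h}$; using $\sigma_\ell=\mathrm{id}$ and the key identity, $\sigma_x\hat{g}_1\sigma_x^{-1}=\sigma_{y-\ell}\hat{g}_1=\sigma_y\hat{g}_1=\hat{g}_2$.

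For \textbf{Part (2)}, write $n=|g|$ and $\hat{g}(e^\alpha)=\epsilon_\alpha e^{g\alpha}$ with $\epsilon_\alpha\in\{\pm1\}$, $\epsilon_0=1$. Iterating gives $\hat{g}^{\,n}(e^\alpha)=c_\alpha e^\alpha$ with $c_\alpha=\prod_{i=0}^{n-1}\epsilon_{g^i\alpha}$, and since $\hat{g}^{\,n}$ is trivial on $M(1)$, it suffices to show $c_\alpha=1$ for all $\alpha$. Present $\hat{L}$ by a $\Z$-bilinear form $B:L\times L\to\Z/2\Z$ with $B(\alpha,\beta)+B(\beta,\alpha)\equiv(\alpha|\beta)\pmod 2$ and $e^\alpha e^\beta=(-1)^{B(\alpha,\beta)}e^{\alpha+\beta}$; then $\hat{g}$ being an automorphism forces $\epsilon_\alpha\epsilon_\beta\epsilon_{\alpha+\beta}^{-1}=(-1)^{B(\alpha,\beta)-B(g\alpha,g\beta)}$, and as the right side is $\Z/2\Z$-bilinear in $(\alpha,\beta)$, a telescoping of $\prod_i\epsilon_{g^i\alpha}$ using $B(\gamma,\delta)-B(\delta,\gamma)\equiv(\gamma|\delta)$ and $g^n=1$ collapses to
\[
c_\alpha=\epsilon_{N\alpha}\cdot(-1)^{\sum_{j=1}^{n-1}(\alpha|g^j\alpha)},\qquad N:=\sum_{i=0}^{n-1}g^i .
\]
Since $\sum_{j=1}^{n-1}(\alpha|g^j\alpha)=(\alpha|N\alpha)-(\alpha|\alpha)$ and $(\alpha|\alpha)\in2\Z$, this reads $c_\alpha=\epsilon_{N\alpha}\,(-1)^{(\alpha|N\alpha)}$. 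If $g$ is fixed-point free, then $N=0$ on $\R\otimes L$, so $c_\alpha=\epsilon_0=1$. If $n$ is odd, then $N\alpha\in L^g$, so $\epsilon_{N\alpha}=1$ because $\hat{g}$ is standard, while $(\alpha|N\alpha)=(N\alpha|N\alpha)/n$ is an integer with $(N\alpha|N\alpha)\in2\Z$ and $n$ odd, hence even; so again $c_\alpha=1$. Therefore $\hat{g}^{\,n}=\mathrm{id}$, and since $\overline{\hat{g}}=g$ has order $n$ this forces $|\hat{g}|=n$.

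I expect the telescoping computation in Part (2) to be the only real work: one must choose the bilinear $2$-cocycle for $\hat{L}$ correctly and carry out the cancellation so that the exponent reduces, modulo $2$, to $(\alpha|N\alpha)$; after that, the even-lattice hypothesis together with the two case hypotheses (either $N=0$, or $N\alpha\in L^g$ and $n$ odd) finish things at once. In Part (1) the only non-formal ingredient is the identification $(L^g)^*=\pi(L^*)$, i.e.\ that a primitive sublattice of $L$ splits off as a direct summand.
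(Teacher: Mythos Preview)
The paper does not supply a proof of this lemma; it merely cites \cite{EMS,LS19}. Your proposal is a correct, self-contained argument and is in the spirit of what those references do.

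For Part~(1), the conjugation identity $\sigma_x\hat{g}\sigma_x^{-1}=\sigma_{(1-g)x}\hat{g}$ is verified exactly as you describe, and your reduction is sound: primitivity of $L^g$ in $L$ (which holds because $L/L^g$ is torsion-free) gives the splitting needed to conclude $(L^g)^*=\pi(L^*)$, so one can correct $y$ by some $\ell\in L^*$ to land in $(1-g)\mathfrak{h}$.

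For Part~(2), the telescoping goes through. Setting $S_k=\sum_{i<k}g^i\alpha$ and using $\epsilon_\alpha\epsilon_\beta=\epsilon_{\alpha+\beta}(-1)^{B(\alpha,\beta)-B(g\alpha,g\beta)}$ inductively, one finds $\prod_{i=0}^{n-1}\epsilon_{g^i\alpha}=\epsilon_{N\alpha}(-1)^{E_n}$ with $E_n\equiv B(\alpha,N\alpha)-B(N\alpha,\alpha)\equiv(\alpha|N\alpha)\pmod 2$, matching your formula after using $(\alpha|\alpha)\in2\Z$. The two endgames are correct: if $g$ is fixed-point free then $N=0$ on $\mathfrak{h}$; if $|g|=n$ is odd then $N\alpha\in L^g$ gives $\epsilon_{N\alpha}=1$ by standardness, and $(\alpha|N\alpha)=(N\alpha|N\alpha)/n$ is an even integer since $(N\alpha|N\alpha)\in2\Z$ and $n$ is odd. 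One small clarification worth adding: $\hat{g}^{\,n}$ also acts trivially on $M(1)$ because $\hat{g}$ acts there through $g$ and $g^n=1$, so $c_\alpha\equiv 1$ really does force $\hat{g}^{\,n}=\mathrm{id}$ on all of $V_L$.
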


Let $g\in O(L)$ be a fixed-point free isometry of order $n$ and let $\hat{g}\in O(\hat{L})$ be a standard lift of $g$.
By the proof of \cite[Theorem 5.15]{LY2} (cf.\ \cite[Lemma 2.2]{LS21}), we obtain
\begin{equation}
\{\sigma_y\mid y\in ((1-g)^{-1}L^*)/L^*\}=\{\sigma_y\mid y\in \h/L^*\}\cap C_{\Aut(V_L)}(\hat{g}),
\label{Eq:hom}
\end{equation}
where $C_{\Aut(V_L)}(\hat{g})$ is the centralizer of $\hat{g}$ in $\Aut(V_L)$.
In addition, we obtain
\begin{equation}
\hom(L/(1-g)L, \Z/n\Z)\cong\{\sigma_y\mid y\in ((1-g)^{-1}L^*)/L^*\}.\label{Eq:hom2}
\end{equation}

The following has been proved in \cite[Theorem 
5.15]{LY2} when $n$ is prime; for non-prime $n$,
we can prove it by the same argument:
 \begin{proposition}
 \label{normalizer}
Let $L$ be an even positive-definite lattice with $L(2)=\emptyset$.
Let $g$ be a fixed-point free isometry of $L$ of order $n$ and
$\hat{g}$ a standard lift of $g$ in $O(\hat{L})$ of order $n$.
Then we have the following exact sequence:
\begin{align*}
\begin{split}
   1\longrightarrow \hom(L/(1-g)L, \Z/n\Z)
  \longrightarrow C_{\aut(V_L)}(\hat{g})
  \stackrel{-}\longrightarrow C_{O(L)}(g) \longrightarrow 1.
\end{split}
\end{align*}
\end{proposition}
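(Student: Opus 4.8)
The plan is to imitate the proof of \cite[Theorem 5.15]{LY2}, verifying at each step that primality of $n$ was not essential. The three ingredients to establish are: (i) the map $\bar{\ }\colon C_{\aut(V_L)}(\hat g)\to O(L)$ lands in $C_{O(L)}(g)$ and is surjective onto it; (ii) its kernel is exactly $\hom(L/(1-g)L,\Z/n\Z)$ under the identification \eqref{Eq:hom2}; and (iii) exactness in the middle, i.e.\ that this kernel really is the full kernel. The hypothesis $L(2)=\emptyset$ is used precisely to get the clean exact sequence $1\to\{\sigma_x\mid x\in\h/L\}\to\Aut(V_L)\xrightarrow{\ -\ }O(L)\to1$ from the excerpt, so that $\Aut(V_L)$ has no ``extra'' automorphisms coming from $N(V_L)$ beyond the inner $\sigma_x$'s; this is what makes the kernel computation tractable.

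First I would handle the kernel. An element of $\Aut(V_L)$ restricting to $\id$ on $L$ is $\sigma_y$ for some $y\in\h/L$, by the exact sequence above. Now $\sigma_y$ commutes with $\hat g$ iff $\sigma_y\hat g\sigma_y^{-1}=\hat g$; conjugating the standard lift $\hat g$ by $\sigma_y$ multiplies it by $\sigma_{y-gy}=\sigma_{(1-g)y}$, so the condition is $(1-g)y\in L$, i.e.\ $y\in(1-g)^{-1}L/L$. Wait --- one must be careful: $\sigma_y$ and $\sigma_{y'}$ coincide as automorphisms iff $y-y'\in L^*$ in general, but since $L(2)=\emptyset$ forces $N(V_L)=\{\sigma_x\mid x\in\h/L\}$ to inject with kernel $L$ (not $L^*$) --- actually the relevant statement in the excerpt is $1\to\{\sigma_x\mid x\in\h/L\}\to\Aut(V_L)$, so $\sigma_y=\id$ iff $y\in L$. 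Hence the kernel of $\bar{\ }$ restricted to $C_{\aut(V_L)}(\hat g)$ is $\{\sigma_y\mid y\in((1-g)^{-1}L)/L\}$, which by \eqref{Eq:hom} and \eqref{Eq:hom2} (applied with $L$ in place of $L^*$, or rather by the same computation) is isomorphic to $\hom(L/(1-g)L,\Z/n\Z)$. I would double-check this identification against \eqref{Eq:hom2} --- the excerpt states $\hom(L/(1-g)L,\Z/n\Z)\cong\{\sigma_y\mid y\in((1-g)^{-1}L^*)/L^*\}$, and the analogous statement with $L^*$ replaced by $L$ throughout is what I need; the argument for it (pairing $y$ with the homomorphism $\alpha+(1-g)L\mapsto n(\alpha\mid y)\bmod n$) is identical and order-of-$n$ is nowhere used.

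Next, surjectivity onto $C_{O(L)}(g)$: given $h\in C_{O(L)}(g)$, lift it to any $\tilde h\in O(\hat L)$ with $\bar{\tilde h}=h$; then $\tilde h\hat g\tilde h^{-1}$ is a lift of $hgh^{-1}=g$, and in fact (since $h$ normalizes $L^g$ and hence $\tilde h\hat g\tilde h^{-1}$ fixes $e^\alpha$ for $\alpha\in L^g$ after correcting by a suitable $\sigma_x$) it is a standard lift of $g$. By Lemma \ref{L:standardlift}(1) all standard lifts of $g$ are conjugate by some $\sigma_x$, $x\in\h$, so after replacing $\tilde h$ by $\sigma_x\tilde h$ we may assume $\tilde h\hat g\tilde h^{-1}=\hat g$, i.e.\ $\tilde h\in C_{\aut(V_L)}(\hat g)$ with $\bar{\tilde h}=h$. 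Finally, exactness in the middle is immediate: anything in the kernel of $\bar{\ }$ that commutes with $\hat g$ is by the first step a $\sigma_y$ with $y\in(1-g)^{-1}L/L$, and conversely all such $\sigma_y$ commute with $\hat g$ and lie in $C_{\aut(V_L)}(\hat g)$.

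The main obstacle is the bookkeeping in the surjectivity step: one must check that an arbitrary lift of $h\in C_{O(L)}(g)$ can be adjusted to commute with $\hat g$ on the nose, not merely up to an inner automorphism, and this requires knowing that the standard lift is well-defined up to the $\sigma_x$-action (Lemma \ref{L:standardlift}) together with a verification that $\tilde h\hat g\tilde h^{-1}$ is again \emph{standard} --- i.e.\ fixes $e^\alpha$ for $\alpha\in L^g = L^{hgh^{-1}}$. This last point uses that $h(L^g)=L^g$, which holds because $h$ commutes with $g$. Everything else is a transcription of \cite[Theorem 5.15]{LY2} with ``$n$ prime'' deleted from the margins, since that hypothesis entered there only in downstream applications (e.g.\ identifying $\hom(L/(1-g)L,\Z/n\Z)$ with an $\F_n$-vector space), not in the proof of the exact sequence itself.
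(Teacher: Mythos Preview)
Your approach is exactly what the paper does---it gives no proof at all, merely remarking that \cite[Theorem 5.15]{LY2} goes through verbatim when primality is dropped. So structurally you are on target. There is, however, a bookkeeping slip in your kernel computation that you should fix rather than patch over.

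The issue is the condition for $\sigma_y$ to be trivial. You write ``$\sigma_y=\id$ iff $y\in L$'', reading this off the exact sequence $1\to\{\sigma_x\mid x\in\h/L\}\to\Aut(V_L)\to O(L)\to1$. But $\sigma_y$ acts on $e^\alpha$ by the scalar $e^{-2\pi i(y\mid\alpha)}$, so $\sigma_y=\id$ on $V_L$ iff $(y\mid\alpha)\in\Z$ for all $\alpha\in L$, i.e.\ iff $y\in L^*$. (The paper's displayed sequence appears to have $\h/L$ where $\h/L^*$ is meant; as sets of automorphisms $\{\sigma_x\mid x\in\h/L\}=\{\sigma_x\mid x\in\h/L^*\}$, so the sequence is still correct, but the parametrization is not injective.) Likewise, $\sigma_y\hat g\sigma_y^{-1}=\hat g$ iff $\sigma_{(1-g)y}=\id$ iff $(1-g)y\in L^*$, not $L$. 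With the correct $L^*$ in both places the kernel becomes $\{\sigma_y\mid y\in((1-g)^{-1}L^*)/L^*\}$, which is precisely \eqref{Eq:hom}, and \eqref{Eq:hom2} already identifies this with $\hom(L/(1-g)L,\Z/n\Z)$ on the nose---no ``analogous statement with $L^*$ replaced by $L$'' is needed.

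One simplification in the surjectivity step: because $g$ is assumed fixed-point free on $L$, we have $L^g=0$, so the standard-lift condition ``$\phi(e^\alpha)=e^\alpha$ for $\alpha\in L^g$'' is vacuous and \emph{every} lift of $g$ in $O(\hat L)$ is standard. Thus $\tilde h\hat g\tilde h^{-1}$ is automatically a standard lift of $g$, and Lemma~\ref{L:standardlift}(1) applies immediately without the intermediate correction you mention.
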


\section{Irreducible $V_L^{\hat{g}}$-modules and $\hat{g}$-conjugates}\label{S:irred}
Let $L$ be a positive-definite even lattice.
Let $g$ be a fixed-point free isometry of $L$ of order $n$ and let $\hat{g}\in O(\hat{L})$ be a standard lift of $g$.
Then the order of $\hat{g}$ is $n$.
Let $V_L^{\hat{g}}$ be the fixed-point subspace of $\hat{g}$, which is a subVOA.
Recall that $V_L^{\hat{g}}$ is strongly regular, namely, rational, $C_2$-cofinite, self-dual and of CFT-type \cite{CM,Mi}.

In this section, we review the construction of irreducible $V_L^{\hat{g}}$-modules and determine their conjugates by some elements in 
$\hom(L/(1-g)L, \Z/n\Z)$; see \eqref{Eq:hom} and \eqref{Eq:hom2}.
Throughout this section, we assume that $g=id$ on $\mathcal{D}(L)$, that is, 
\begin{equation}
(1-g)L^*\subset L.\label{Eq:Assump}
\end{equation}

\subsection{Submodules of irreducible (untwisted) $V_L$-modules}\label{S:un}
In this subsection, we deal with the irreducible $V_L^{\hat{g}}$-modules which appear as $V_L^{\hat{g}}$-submodules of irreducible (untwisted) $V_L$-modules.

Let $\lambda+L\in \mathcal{D}(L)$ and $V_{\lambda+L}=M(1)\otimes\Span_\C\{e^\alpha\mid \alpha\in\lambda+L\}$.
Then $V_{\lambda+L}$ has an irreducible $V_L$-module structure (\cite{FLM}).
It follows from $(1-g)L^*\subset L$ that $V_{\lambda+L}$ is $\hat{g}$-invariant. 
Fix a $\hat{g}$-module isomorphism $\hat{g}_{\lambda+L}$ of $V_{\lambda+L}$ of order $n$. 
For $0\le i\le n-1$, set 
$$V_{\lambda+L}(i)=\{v\in V_{\lambda+L}\mid \hat{g}_{\lambda+L}(v)=\exp(2\pi\sqrt{-1}i/n)v\},$$ which is an irreducible $V_L^{\hat{g}}$-module.
For $\lambda+L\in \mathcal{D}(L)\setminus \{L\}$, $1\le i\le n-1$ and $0\le j\le n-1$, it is easy to see that 
\begin{equation}
\varepsilon(V_L(0))=0,\quad \varepsilon(V_L(i))=1,\quad \varepsilon(V_{\lambda+L}(j))=\frac{1}{2}\min(\lambda+L).\label{Eq:cwun}
\end{equation}

Let $h\in N_{O(L)}(g)$ and let $\hat{h}\in N_{\Aut(V_L)}(\hat{g})$ be a lift of $h$ (cf.\ Proposition \ref{normalizer}).
Then $\hat{h}(V_L^{\hat{g}})=V_L^{\hat{g}}$.
In addition, for $\lambda+L\in\mathcal{D}(L)$, we have $V_{\lambda+L}\circ\hat{h}\cong V_{h^{-1}(\lambda)+L}$ as $V_L$-modules.
We regard $\hat{h}$ as an element of $\Aut(V_L^{\hat{g}})$ by restriction.
Then we obtain the following:

\begin{lemma}\label{Lem:conjlift}
Let $h\in N_{O(L)}(g)$ and let $\hat{h}\in N_{\Aut(V_L)}(\hat{g})$ be a lift of $h$.
As sets of isomorphism classes of irreducible $V_L^{\hat{g}}$-modules,  we have 
\[
\{V_{\lambda+L}(j)\circ\hat{h}\mid 0\le j\le n-1\}= \{V_{h^{-1}(\lambda)+L}(j)\mid 0\le j\le n-1\}.
\]
\end{lemma}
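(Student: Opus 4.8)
The plan is to reduce the claim to the corresponding statement about irreducible $V_L$-modules together with the way $\hat g$-eigenspaces transform under conjugation. First I would fix, as in the setup, a $\hat g$-module isomorphism $\hat g_{\lambda+L}$ of $V_{\lambda+L}$ of order $n$, so that the $V_L^{\hat g}$-submodules $V_{\lambda+L}(j)$, $0\le j\le n-1$, are precisely the eigenspaces of $\hat g_{\lambda+L}$. Since $h\in N_{O(L)}(g)$ and $\hat h\in N_{\Aut(V_L)}(\hat g)$ is a lift of $h$, the conjugation by $\hat h$ preserves $V_L^{\hat g}$, so each $V_{\lambda+L}(j)\circ\hat h$ is again an irreducible $V_L^{\hat g}$-module; this is Lemma \ref{Lem:conj}(1). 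The key input is the isomorphism $V_{\lambda+L}\circ\hat h\cong V_{h^{-1}(\lambda)+L}$ of $V_L$-modules, already recorded in the excerpt just before the statement.

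Next I would track the $\hat g$-action through this isomorphism. Fix a $V_L$-module isomorphism $\psi\colon V_{\lambda+L}\circ\hat h \to V_{h^{-1}(\lambda)+L}$. The operator $\hat h^{-1}\hat g_{\lambda+L}\hat h$ is a $\hat g$-module automorphism of $V_{\lambda+L}\circ\hat h$ of order $n$ (using $\hat h\hat g\hat h^{-1}\in\langle\hat g\rangle$ and, if one wants, that the only scalar ambiguity is an $n$-th root of unity), and transporting it along $\psi$ gives a $\hat g$-module automorphism of $V_{h^{-1}(\lambda)+L}$ of order $n$. By Schur's lemma applied to the irreducible $V_L$-module $V_{h^{-1}(\lambda)+L}$, any two order-$n$ $\hat g$-module isomorphisms differ by an $n$-th root of unity, hence agree up to cyclically permuting the eigenspace labels. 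Consequently $\psi$ carries the eigenspace decomposition of $\hat h^{-1}\hat g_{\lambda+L}\hat h$ on $V_{\lambda+L}\circ\hat h$ to the eigenspace decomposition defining $\{V_{h^{-1}(\lambda)+L}(j)\mid 0\le j\le n-1\}$, up to a shift of index. Since the eigenspaces of $\hat h^{-1}\hat g_{\lambda+L}\hat h$ on $V_{\lambda+L}$ are exactly $\hat h^{-1}(V_{\lambda+L}(i))$, we get that $\{V_{\lambda+L}(j)\circ\hat h\mid j\}$ equals $\{V_{h^{-1}(\lambda)+L}(j)\mid j\}$ as sets of isomorphism classes.

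The main obstacle is bookkeeping the scalar/phase ambiguities carefully: $\hat h$ is only a lift of $h$, so $\hat h\hat g\hat h^{-1}=\hat g^{k}$ for some $k$ coprime to $n$ times possibly a $\sigma_x$, and one must check that the induced operator on $V_{h^{-1}(\lambda)+L}$ is still a genuine order-$n$ $\hat g$-module isomorphism rather than an $\hat g^{k}$-twisted gadget. This is where the hypothesis $(1-g)L^*\subset L$ (so that all these $V_{\mu+L}$ are $\hat g$-invariant, not merely $\hat g$-twisted) and Lemma \ref{L:standardlift} on standard lifts do the work; the upshot is that conjugating a standard-type lift by $\hat h$ again yields a lift of the same order, so the eigenvalue spectrum is $\{\exp(2\pi\sqrt{-1}j/n)\mid 0\le j\le n-1\}$ on both sides. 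Once that is pinned down, the conclusion is just the statement that two order-$n$ semisimple operators on an irreducible module with the same spectrum have the same set of eigenspaces up to relabeling, which is immediate. I would keep this argument short, citing Lemma \ref{Lem:conj} and the displayed isomorphism $V_{\lambda+L}\circ\hat h\cong V_{h^{-1}(\lambda)+L}$, and only spell out the Schur's-lemma phase argument in a sentence or two.
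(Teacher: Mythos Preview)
Your approach is correct and is essentially the paper's: the lemma is stated there as an immediate consequence of the $V_L$-module isomorphism $V_{\lambda+L}\circ\hat h\cong V_{h^{-1}(\lambda)+L}$ recorded in the paragraph preceding it, together with the fact that the irreducible $V_L^{\hat g}$-submodules of any $V_{\mu+L}$ are exactly the $V_{\mu+L}(j)$. Your final paragraph is unnecessary worry: the hypothesis is $\hat h\in N_{\Aut(V_L)}(\hat g)$, not merely that $\hat h$ is some lift of $h\in N_{O(L)}(g)$, so $\hat h\hat g\hat h^{-1}\in\langle\hat g\rangle$ on the nose with no stray $\sigma_x$, and indeed one need not track eigenvalues at all---conjugation by $\hat h$ simply bijects the set of irreducible $V_L^{\hat g}$-summands of $V_{\lambda+L}$ with that of $V_{h^{-1}(\lambda)+L}$.
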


The $\sigma_x$-conjugate of $V_{\lambda+L}(i)$ for some $x\in\h$ is described in \cite[Lemma 3.2]{LS21}. 
\begin{lemma}\label{Lem:conjhom} Let $\alpha,\lambda \in L^*$ and $0\le i,s\le n-1$.
If $(\alpha|\lambda)\in s/n+\Z$, then as $V_L^{\hat{g}}$-modules, $$V_{\lambda+L}(i)\circ \sigma_{(1-g)^{-1}\alpha}\cong V_{\lambda+L}(i-s).$$ 
\end{lemma}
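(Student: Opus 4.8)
\textbf{Proof plan for Lemma \ref{Lem:conjhom}.}
The plan is to compute directly how $\sigma_{(1-g)^{-1}\alpha}$ acts on the homogeneous components $V_{\lambda+L}(i)$, using the description $\sigma_x = \exp(-2\pi\sqrt{-1}\,x_{(0)})$ from \eqref{Eq:sigma}. First I would recall that on the irreducible $V_L$-module $V_{\lambda+L}$, the operator $x_{(0)}$ acts on the subspace $M(1)\otimes e^{\lambda+\beta}$ (for $\beta\in L$) as the scalar $(x\mid \lambda+\beta)$ plus a Heisenberg part that is nilpotent/derivation-like; more precisely $\sigma_x$ acts on $M(1)\otimes e^{\mu}$, $\mu\in\lambda+L$, by the scalar $\exp(-2\pi\sqrt{-1}(x\mid\mu))$ on the $e^\mu$ factor and trivially on $M(1)$, since $\sigma_x$ fixes the Heisenberg subVOA pointwise. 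Hence $\sigma_x$ is a $V_L$-module isomorphism $V_{\lambda+L}\to V_{\lambda+L}\circ\sigma_x$, but what matters here is that $\sigma_x$ viewed inside $\Aut(V_L^{\hat g})$ permutes the eigenspaces $V_{\lambda+L}(i)$.

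The key computation is to identify the eigenvalue shift. Take $x=(1-g)^{-1}\alpha$ with $\alpha\in L^*$. By Lemma \ref{Lem:1-g+}(2) (or the identity $(1-g)(\sum_{i=1}^{n-1} i g^i)=-n$ from its proof), for any $\mu\in\Q\otimes_\Z L$ one has $((1-g)^{-1}\alpha\mid \mu)\equiv \tfrac1n(\alpha\mid \sum_{i=1}^{n-1} i\, g^{-i}\mu)\cdot(-1)$ — but the clean statement is that modulo $\Z$ the value $((1-g)^{-1}\alpha\mid \mu)$ depends only on the coset of $\mu$ in $L^*/(1-g)L^*$ when $\alpha\in L^*$, and in particular on $\mu$ through $\lambda$ only modulo $L$, because $((1-g)^{-1}\alpha\mid (1-g)L) = (\alpha \mid L)\subset\Z$ by the pairing identity \eqref{Eq:innerfpf}. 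So on the whole module $V_{\lambda+L}$ the conjugation by $\sigma_{(1-g)^{-1}\alpha}$ multiplies $e^\mu$ for $\mu\in\lambda+L$ by a common phase times a correction. Concretely I would show: $\sigma_{(1-g)^{-1}\alpha}$ acts on $V_{\lambda+L}$ by the operator that sends $v\in V_{\lambda+L}(i)$ to an element of $V_{\lambda+L}(i-s)$, because conjugating the chosen $\hat g$-action $\hat g_{\lambda+L}$ by $\sigma_{(1-g)^{-1}\alpha}$ multiplies it by $\exp(2\pi\sqrt{-1}s/n)$, where $s$ is determined by $(\alpha\mid\lambda)\in s/n+\Z$. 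The cleanest way to see the shift is to use the commutation relation between $\sigma_x$ and the standard lift $\hat g$: one computes $\sigma_x \hat g \sigma_x^{-1} = \hat g\,\sigma_{(g-1)x}$ (from $\hat g\, a_{(0)}\hat g^{-1} = (ga)_{(0)}$ applied to $a=x$), so with $x=(1-g)^{-1}\alpha$ we get $\sigma_x\hat g\sigma_x^{-1}=\hat g\,\sigma_{-\alpha}$, and $\sigma_{-\alpha}$ acts on $e^\mu$, $\mu\in\lambda+L$, by $\exp(2\pi\sqrt{-1}(\alpha\mid\mu))=\exp(2\pi\sqrt{-1}(\alpha\mid\lambda))=\exp(2\pi\sqrt{-1}s/n)$, a scalar on all of $V_{\lambda+L}$ since $(\alpha\mid L)\subset\Z$. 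Therefore on $V_{\lambda+L}$ the operator $\sigma_x \hat g_{\lambda+L}\sigma_x^{-1}$ equals $\exp(2\pi\sqrt{-1}s/n)\,\hat g_{\lambda+L}$, which forces $\sigma_x$ to carry the $\exp(2\pi\sqrt{-1}i/n)$-eigenspace of $\hat g_{\lambda+L}$ onto the $\exp(2\pi\sqrt{-1}(i-s)/n)$-eigenspace, i.e. $V_{\lambda+L}(i)\circ\sigma_x \cong V_{\lambda+L}(i-s)$ as $V_L^{\hat g}$-modules.

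I expect the main obstacle to be bookkeeping rather than conceptual: keeping straight the direction of the conjugation action on modules versus on operators (the $M\circ\tau$ convention of Definition \ref{Mconj} has $Y_{M\circ\tau}(a,z)=Y_M(\tau a,z)$), ensuring the chosen twisted-module isomorphism $\hat g_{\lambda+L}$ is compatible so that the eigenspace labels $V_{\lambda+L}(i)$ are well defined up to the obvious indeterminacy, and checking that $(1-g)^{-1}\alpha$ indeed lies in the relevant domain so that $\sigma_{(1-g)^{-1}\alpha}\in C_{\Aut(V_L)}(\hat g)$ modulo $\langle\hat g\rangle$ — but this last point is exactly \eqref{Eq:hom}. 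Since the statement is essentially \cite[Lemma 3.2]{LS21}, I would phrase the proof as a short self-contained derivation of the commutator identity $\sigma_x\hat g\sigma_x^{-1}=\hat g\,\sigma_{(g-1)x}$ followed by the scalar evaluation above, citing \cite{LS21} for the original treatment.
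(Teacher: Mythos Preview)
Your approach is exactly the paper's: set $y=(1-g)^{-1}\alpha$, compute the commutator $\sigma_y\hat g_{\lambda+L}\sigma_y^{-1}$, observe it equals $\hat g_{\lambda+L}$ times a scalar determined by $(\alpha\mid\lambda)\bmod\Z$, and read off the eigenspace shift. The paper's (commented-out) proof writes this as $\sigma_y\hat g_{\lambda+L}\sigma_y^{-1}=\sigma_{(1-g)y}\hat g_{\lambda+L}=\sigma_\alpha\hat g_{\lambda+L}$, giving the scalar $\exp(-2\pi\sqrt{-1}\,s/n)$, and then notes that on $V_{\lambda+L}(i)\circ\sigma_y$ the $\hat g$-action therefore has eigenvalue $\exp(2\pi\sqrt{-1}(i-s)/n)$.

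One bookkeeping point worth fixing when you write it up: your commutator identity $\sigma_x\hat g\sigma_x^{-1}=\hat g\,\sigma_{(g-1)x}$ is off; from $\hat g\sigma_x\hat g^{-1}=\sigma_{gx}$ one gets $\sigma_x\hat g\sigma_x^{-1}=\sigma_{(1-g)x}\hat g=\hat g\,\sigma_{(g^{-1}-1)x}$, so with $x=(1-g)^{-1}\alpha$ the scalar is $\exp(-2\pi\sqrt{-1}\,s/n)$, not $\exp(+2\pi\sqrt{-1}\,s/n)$. This sign is then compensated by the direction of the module conjugation (it is $(\sigma_x^M)^{-1}$, not $\sigma_x^M$, that realizes $M\circ\sigma_x\cong M$), which is precisely the pitfall you flagged; just make sure both signs are tracked consistently in the final write-up.
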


\begin{lemma}\label{Re:conj}
Assume that $(1-g)L^*=L$.
Then $\hom(L/(1-g)L, \Z/n\Z)$ acts faithfully on $\{V_{\lambda+L}(i)\mid \lambda+L\in \mathcal{D}(L),\ 0\le i\le n-1\}$.
\end{lemma}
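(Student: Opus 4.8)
The plan is to reduce everything to Lemma~\ref{Lem:conjhom}. Via \eqref{Eq:hom2}, every element of $G:=\hom(L/(1-g)L,\Z/n\Z)$ is $\sigma_y$ with $y=(1-g)^{-1}\alpha$ for some $\alpha\in L^*$, and $\sigma_y$ is the identity of $G$ exactly when $y\in L^*$, i.e.\ when $\alpha\in(1-g)L^*=L$; this is the point at which the hypothesis $(1-g)L^*=L$ is used. Hence it suffices to prove: if $\sigma_y$ fixes the isomorphism class of every $V_{\lambda+L}(i)$ (for $\lambda+L\in\mathcal{D}(L)$, $0\le i\le n-1$), then $\alpha\in L$.

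Next I would record that the hypothesis makes $\mathcal{D}(L)$ annihilated by $n$: from $(1-g)^{-1}=-\tfrac1n\sum_{i=1}^{n-1}ig^{i}$ (which holds since $g$ is fixed-point free) together with $L^*=(1-g)^{-1}L$, any $\alpha\in L^*$ satisfies $n\alpha=\bigl(-\sum_{i=1}^{n-1}ig^{i}\bigr)\bigl((1-g)\alpha\bigr)\in L$, so $n(\alpha|\lambda)=(n\alpha|\lambda)\in\Z$ for all $\alpha,\lambda\in L^*$. Therefore, for each $\lambda+L\in\mathcal{D}(L)$ there is a well-defined $s(\lambda)\in\{0,\dots,n-1\}$ with $(\alpha|\lambda)\equiv s(\lambda)/n\pmod\Z$, and Lemma~\ref{Lem:conjhom} gives $V_{\lambda+L}(i)\circ\sigma_y\cong V_{\lambda+L}(i-s(\lambda))$ for every $i$. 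Thus $\sigma_y$ fixes the isomorphism class of every $V_{\lambda+L}(i)$ if and only if $V_{\lambda+L}(i)\cong V_{\lambda+L}(i-s(\lambda))$ for all $\lambda$ and all $i$; invoking the standard fact that the $n$ eigenspace summands $V_{\lambda+L}(0),\dots,V_{\lambda+L}(n-1)$ of $V_{\lambda+L}$ are pairwise non-isomorphic as $V_L^{\hat{g}}$-modules, this forces $s(\lambda)=0$, i.e.\ $(\alpha|\lambda)\in\Z$, for every $\lambda\in L^*$. Then $\alpha\in(L^*)^*=L$, which by the first paragraph means $\sigma_y$ is the identity of $G$, and faithfulness follows.

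The only non-formal ingredient is this pairwise non-isomorphism of the eigenspace summands $V_{\lambda+L}(i)$ over $V_L^{\hat{g}}$; note that the conformal weights in \eqref{Eq:cwun} do not distinguish them when $\lambda\notin L$, so it must be drawn from the general orbifold theory of $V_L^{\hat{g}}\subset V_L$ (the module analogue of the quantum Galois correspondence). Everything else is bookkeeping: the parametrization of $G$, the $\tfrac1n\Z$-integrality of $(\alpha|\lambda)$ needed to apply Lemma~\ref{Lem:conjhom}, and the observation that the hypothesis $(1-g)L^*=L$ is precisely what turns ``$(\alpha|\lambda)\in\Z$ for all $\lambda\in L^*$'' into ``$\sigma_y$ is trivial in $G$''. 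I would therefore expect the write-up to be short once that one fact is cited.
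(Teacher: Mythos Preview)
Your proposal is correct and follows essentially the same route as the paper's proof: parametrize the elements of $\hom(L/(1-g)L,\Z/n\Z)$ by $\sigma_{(1-g)^{-1}\alpha}$ with $\alpha\in L^*$, use Lemma~\ref{Lem:conjhom} to conclude that triviality of the action forces $(\alpha|\lambda)\in\Z$ for all $\lambda\in L^*$, hence $\alpha\in L$, and then invoke the hypothesis $(1-g)L^*=L$ to see that $(1-g)^{-1}\alpha\in L^*$ and $\sigma_{(1-g)^{-1}\alpha}=id$. You are more explicit than the paper on two points it leaves implicit---the $(1/n)\Z$-integrality of $(\alpha|\lambda)$ needed to invoke Lemma~\ref{Lem:conjhom}, and the pairwise non-isomorphism of the $V_{\lambda+L}(i)$ as $V_L^{\hat g}$-modules---but these are exactly the ingredients the paper is tacitly using.
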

\begin{proof} Let $\sigma_{(1-g)^{-1}\alpha}\in \hom(L/(1-g)L, \Z/n\Z)$, $\alpha\in L^*$, as in \eqref{Eq:hom2}.
Assume that $V_{\lambda+L}(i)\circ \sigma_{(1-g)^{-1}\alpha}\cong V_{\lambda+L}(i)$ for all $0\le i\le n-1$ and all $\lambda+L\in\mathcal{D}(L)$.
By Lemma \ref{Lem:conjhom}, $(\alpha|\lambda)\in\Z$ for all $\lambda\in L^*$, that is, $\alpha\in L$.
It follows from the assumption $(1-g)^{-1}L=L^*$ that
 $(1-g)^{-1}\alpha\in L^*$, which shows $\sigma_{(1-g)^{-1}\alpha}=id$. 
\end{proof}

Recall that $V_L$ is a $\Z_n$-graded simple current extension of $V_L^{\hat{g}}$ and the set of all irreducible $V_L^{\hat{g}}$-submodules of $V_L$ is $\{V_L(i)\mid 0\le i\le n-1\}$, which forms a cyclic group of order $n$ under the fusion products (\cite{DJX}).
Hence, by \cite{Sh04}, we obtain the following:
\begin{proposition}[{\cite[Theorem 3.3]{Sh04}}] \label{P:stabVL1} \ 
\begin{enumerate}[{\rm (1)}]
\item The stabilizer of $V_L(1)$ in $\Aut(V_L^{\hat{g}})$ is $C_{\Aut(V_L)}(\hat{g})/\langle \hat{g}\rangle$.
\item The stabilizer of $\{V_L(i)\mid 0\le i\le n-1\}$ in $\Aut(V_L^{\hat{g}})$ is $N_{\Aut(V_L)}(\langle\hat{g}\rangle)/\langle\hat{g}\rangle$.
\end{enumerate}
\end{proposition}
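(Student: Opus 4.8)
The plan is to realize both stabilizers as the image of the restriction homomorphism
\[
\rho\colon N_{\Aut(V_L)}(\langle\hat{g}\rangle)\longrightarrow \Aut(V_L^{\hat{g}}),\qquad \phi\mapsto\phi|_{V_L^{\hat{g}}},
\]
which is well defined since any $\phi$ normalizing $\langle\hat{g}\rangle$ preserves the fixed-point subalgebra $V_L^{\hat{g}}=V_L(0)$. The whole argument rests on the two facts recalled just above the statement: $V_L=\bigoplus_{i=0}^{n-1}V_L(i)$ is the $\Z_n$-graded simple current extension of $V_L^{\hat{g}}$, the $V_L(i)$ being pairwise non-isomorphic irreducible $V_L^{\hat{g}}$-modules with $V_L(i)\boxtimes V_L(j)\cong V_L(i+j)$ and $V_L(1)^{\boxtimes i}\cong V_L(i)$ \cite{DJX}; and an automorphism of $V_L^{\hat{g}}$ stabilizing the subgroup $\{V_L(i)\}$ of $\irr(V_L^{\hat{g}})$ extends to an automorphism of $V_L$ \cite{Sh04}.

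First I would determine $\ker\rho$. If $\phi|_{V_L^{\hat{g}}}=\id$, then $\phi(V_L(i))\cong V_L(i)$ as $V_L^{\hat{g}}$-modules, and since each $V_L(i)$ occurs with multiplicity one in $V_L$ this forces $\phi(V_L(i))=V_L(i)$; by Schur's lemma $\phi$ acts on $V_L(i)$ by a scalar $\zeta_i$ with $\zeta_0=1$, and compatibility of $\phi$ with the vertex operators together with $V_L(i)\boxtimes V_L(j)\cong V_L(i+j)$ forces $i\mapsto\zeta_i$ to be a character of $\Z_n$. Hence $\zeta_i=\zeta_1^{\,i}$ with $\zeta_1$ an $n$-th root of unity, so $\phi\in\langle\hat{g}\rangle$; the reverse inclusion is trivial, so $\ker\rho=\langle\hat{g}\rangle$, and the same computation gives $\ker(\rho|_{C_{\Aut(V_L)}(\hat{g})})=\langle\hat{g}\rangle$.

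Next I would identify the image with the claimed stabilizers. Any $\phi\in N_{\Aut(V_L)}(\langle\hat{g}\rangle)$ permutes the $\hat{g}$-eigenspaces $V_L(i)$, so $\rho(\phi)$ stabilizes $\{V_L(i)\mid 0\le i\le n-1\}$, and if $\phi\in C_{\Aut(V_L)}(\hat{g})$ it preserves each $V_L(i)$, so $\rho(\phi)$ fixes the class $V_L(1)$. Conversely, given $\tau\in\Aut(V_L^{\hat{g}})$ stabilizing $\{V_L(i)\}$, Lemma \ref{Lem:conj} shows that $\tau$-conjugation respects $\boxtimes$, hence induces a group automorphism $\sigma$ of $\Z_n$ with $V_L(i)\circ\tau\cong V_L(\sigma(i))$; I extend $\tau$ to $\tilde{\tau}\in\Aut(V_L)$ by the simple current extension theory, note that $\tilde{\tau}(V_L(i))\cong V_L(i)\circ(\tau|_{V_L^{\hat{g}}})^{-1}\cong V_L(\sigma^{-1}(i))$ and hence equals $V_L(\sigma^{-1}(i))$, and compare the $\hat{g}$-actions: since $\sigma^{-1}(i)=ki$ in $\Z_n$ with $k=\sigma^{-1}(1)\in(\Z/n\Z)^\times$, one gets $\tilde{\tau}^{-1}\hat{g}\tilde{\tau}=\hat{g}^{k}$, so $\tilde{\tau}\in N_{\Aut(V_L)}(\langle\hat{g}\rangle)$ with $\rho(\tilde{\tau})=\tau$. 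If in addition $\tau$ fixes $V_L(1)$, then it fixes every $V_L(i)=V_L(1)^{\boxtimes i}$, so $\sigma=\id$, $k=1$, and $\tilde{\tau}\in C_{\Aut(V_L)}(\hat{g})$. Combining these facts, $\rho$ restricts to isomorphisms onto the two asserted stabilizers.

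The Schur-lemma computation of the kernel and the eigenspace bookkeeping are routine. The one genuinely external input — and the step I would be most careful about — is the extendability of a stabilizing automorphism of $V_L^{\hat{g}}$ to $V_L$: I need to quote the simple current extension machinery of \cite{Sh04} in the right generality and check that its hypotheses are exactly those provided here by \cite{DJX} (the $V_L(i)$ being inequivalent simple currents that form a cyclic group of order $n$ under fusion, with $V_L(0)=V_L^{\hat{g}}$ strongly regular).
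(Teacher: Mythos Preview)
Your argument is correct and is precisely the standard proof behind the cited reference: the paper does not give its own proof but simply invokes \cite[Theorem~3.3]{Sh04} after noting (via \cite{DJX}) that $V_L=\bigoplus_i V_L(i)$ is a $\Z_n$-graded simple current extension of $V_L^{\hat{g}}$. Your kernel computation and eigenspace bookkeeping match that approach exactly; the one point you flag --- that the extension theorem of \cite{Sh04} must be quoted for general $\Z_n$-graded simple current extensions rather than just the $\Z_2$ case treated explicitly there --- is exactly right, and it is worth noting that this generality is available (the argument in \cite{Sh04} goes through verbatim once the $V_L(i)$ are known to be inequivalent simple currents forming a cyclic group under fusion over a strongly regular $V_L^{\hat g}$).
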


\subsection{Submodules of irreducible twisted $V_L$-modules}\label{S:tw}

In this subsection, we deal with the irreducible $V_L^{\hat{g}}$-modules which appear as $V_L^{\hat{g}}$-submodules of irreducible $\hat{g}^i$-twisted $V_L$-modules for some $1\le i\le n-1$.
We adopt the notation in \cite{La19} and use the descriptions of the irreducible $\hat{g}^i$-twisted $V_L$-modules in \cite[Sections 3.2 and 3.3]{AbeLY} (see also \cite{Le,DL,BK04}).
Let $1\le i\le n-1$ and set $f=g^i$.
Then the order of $f$ is $m=n/\gcd(n,i)$. 
Let $\hat{f}$ be a standard lift of $f$.
Then the order of $\hat{f}$ is either $m$ or $2m$.
Note that $f$ is not necessarily fixed-point free on $\R\otimes_\Z L$ and that $\hat{g}^i$ is not necessarily a standard lift of $f=g^i$.

By the assumption that $(1-g)L^*\subset L$, for any $\lambda+L\in \mathcal{D}(L)$, we have $f(\lambda)+L=\lambda+L$.
Let $P_0^f:\h\to \C\otimes_\Z L^f$ be the orthogonal projection.
Then $V_L$ has exactly $|\mathcal{D}(L)|$ irreducible $\hat{f}$-twisted $V_L$-modules up to isomorphisms \cite{DLM2}; they are given by $$V_{\lambda+L}[\hat{f}]=M(1)[f]\otimes\C[P_0^f(\lambda+L)]\otimes T_\lambda,\quad \lambda+L\in \mathcal{D}(L)$$
where $M(1)[f]$ is the ``$f$-twisted" free bosonic space, $\C[P_0^f(\lambda+L)]$ is a module over the group algebra of $P_0^f(L)$ and $T_\lambda$ is an irreducible module over a certain ``$f$-twisted" central extension of $L_f$ labeled by the characters of $(1-f)L^*/(1-f)L$ (\cite{Le,DL}).
Note that the labeling of $T_\lambda$ depends on the choice of certain normal subgroup of the ``$f$-twisted" central extension of $L_f$.

\begin{remark}\label{Re:Tlambda} When $m=n$, the set of linear characters of $(1-f)L^*/(1-f)L$ is $$\{\exp(-2\pi\sqrt{-1}(\cdot|(1-f)^{-1}\lambda))\mid \lambda+L\in \mathcal{D}(L)\}$$
and we choose the labeling so that $T_\lambda$ is associated with $\exp(-2\pi\sqrt{-1}(\cdot|(1-f)^{-1}\lambda))$.
\end{remark}

Since both $\hat{f}$ and $\hat{g}^i$ are lifts of $g^i$, there exists $v\in\h$ such that $\hat{g}^i=\sigma_v\hat{f}$ (see \eqref{Eq:exact}).
Up to conjugation by an element in $\Aut(V_L)$, we may assume that  $v\in P_0^f(\h)$ (cf.\ \cite[Lemma 4.5]{LS19}).
Then $\hat{f}$ and $\sigma_v$ are commutative, and the order of $\sigma_v$ divides $|\hat{f}|$, which shows $v\in(1/|\hat{f}|)L^*$.
Modifying $V_{\lambda+L}[\hat{f}]$ by the $\Delta$-operator in \cite{Li} associated with $\sigma_v$, we obtain an irreducible $\sigma_v\hat{f}$-twisted-module $V_{\lambda+L}[\hat{g}^i]$;
as a vector space
$$V_{\lambda+L}[\hat{g}^i]=M(1)[f]\otimes\C[v+P_0^f(\lambda+L)]\otimes T_\lambda,$$
and its conformal weight is given by 
\begin{equation}
\varepsilon(V_{\lambda+L}[\hat{g}^i])=\frac{1}{2}\min(v+P_0^f(\lambda+L))+\frac{1}{4m^2}\sum_{j=1}^{m-1}j(m-j)\dim\h_{(j)},\label{Eq:cw}
\end{equation}
where $\h_{(j)}$ is the eigenspace of $f$ with eigenvalue $\exp(2\pi\sqrt{-1}j/m)$.

\begin{remark}\label{R:intwist} Let $v\in (1/s)L^*$.
Then $\sigma_v\in\Aut(V_L)$ satisfies $\sigma_v^s=id$.
Applying the $\Delta$-operator associated with $\sigma_v$ to the irreducible (untwisted) $V_L$-module $V_{\lambda+L}$, we obtain the $\sigma_v$-twisted $V_L$-module structure on $V_{v+\lambda+L}$.
Conversely, any irreducible $\sigma_v$-twisted $V_L$-module is obtained by this construction. (\cite{Li, DLM2})
\end{remark}

For $1\le i\le n-1$, the irreducible $\hat{g}^i$-twisted $V_L$-module $V_{\lambda+L}[\hat{g}^i]$ is $\hat{g}$-stable (\cite[Section 4]{DL}).
Let $\{V_{\lambda+L}[\hat{g}^i](j)\mid 0\le j\le n-1\}$ be the set of (inequivalent) irreducible $V_L^{\hat{g}}$-submodules of $V_{\lambda+L}[\hat{g}^i]$.
Later, we will specify the labeling by using their conformal weights and the fusion products as in \cite{La19}.
By Remark \ref{Re:Tlambda}, we obtain the following lemma.

\begin{lemma}\label{Lem:conjun} Let $\alpha\in L^*$ and let $1\le i\le n-1$ with $\gcd(n,i)=1$.
Then, as $V_L^{\hat{g}}$-modules, $$V_{L}[\hat{g}^i]\circ \sigma_{(1-g^i)^{-1}\alpha}\cong V_{\alpha+L}[\hat{g}^i].$$
\end{lemma}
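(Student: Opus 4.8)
The plan is to deduce Lemma \ref{Lem:conjun} from the $\Delta$-operator description of twisted modules together with the labeling convention fixed in Remark \ref{Re:Tlambda}. First I would reduce to the case $i$ with $\gcd(n,i)=1$, so that $f=g^i$ also has order $m=n$ and is fixed-point free; in particular $P_0^f=0$, $(1-g^i)$ is invertible on $\R\otimes_\Z L$, and by Lemma \ref{Lem:1-g+}(3) (or the corresponding commented-out facts) $(1-g^i)L=(1-g)L$ and $(1-g^i)^{-1}L=(1-g)^{-1}L$, so the element $\sigma_{(1-g^i)^{-1}\alpha}$ indeed lies in $\hom(L/(1-g)L,\Z/n\Z)\subset\Aut(V_L^{\hat g})$ by \eqref{Eq:hom}, \eqref{Eq:hom2}. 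Thus both sides of the claimed isomorphism are genuine irreducible $V_L^{\hat g}$-modules (or, before restricting, irreducible $\hat g^i$-twisted $V_L$-modules).

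Next I would track what the automorphism $\sigma_{(1-g^i)^{-1}\alpha}$ does to the twisted module $V_L[\hat g^i]$. Since $\sigma_y$ with $y=(1-g^i)^{-1}\alpha$ is an (inner) automorphism of $V_L$, conjugating the twisted-module structure map by $\sigma_y$ produces an isomorphic twisted module; concretely, applying the $\Delta$-operator associated with $\sigma_y$ to $V_L[\hat g^i]=M(1)[f]\otimes\C[P_0^f(L)]\otimes T_0$ shifts the group-algebra factor and, more importantly, twists the $T_0$-factor by the linear character $\exp(-2\pi\sqrt{-1}(\cdot|y))=\exp(-2\pi\sqrt{-1}(\cdot|(1-f)^{-1}\alpha))$ on $(1-f)L^*/(1-f)L$. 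By the choice of labeling in Remark \ref{Re:Tlambda}, the module $T_0$ corresponds to the trivial character, so after this twist the $T$-factor corresponds precisely to $\exp(-2\pi\sqrt{-1}(\cdot|(1-f)^{-1}\alpha))$, which is by definition $T_\alpha$. Hence $V_L[\hat g^i]\circ\sigma_y\cong M(1)[f]\otimes\C[P_0^f(L)]\otimes T_\alpha=V_{\alpha+L}[\hat g^i]$ as $\hat g^i$-twisted $V_L$-modules; restricting to $V_L^{\hat g}$ gives the stated isomorphism of $V_L^{\hat g}$-modules. (One should check that the conformal-weight formula \eqref{Eq:cw} is consistent with this identification, but since $P_0^f(L)=0$ here the weight depends only on the $\dim\h_{(j)}$ term, which is unchanged.)

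The main obstacle I anticipate is bookkeeping with the $\Delta$-operator and the central-extension twist of $T_\lambda$: one must verify that the effect of $\Delta(\sigma_y,z)$ on the $T$-factor is exactly multiplication of the defining character by $\exp(-2\pi\sqrt{-1}(\cdot|y))$, and that this is compatible with — rather than inverse to — the labeling convention, i.e. that signs and the direction of the action come out right. This is essentially the same computation as in the proof of Lemma \ref{Lem:conjhom} (the untwisted analogue, whose proof is recalled in the commented-out block) transported through the $\Delta$-operator construction of \cite{Li}, so I would model the argument on that and on \cite[Sections 3.2, 3.3]{AbeLY}; the only genuinely new input is matching $(1-g^i)^{-1}\alpha$ with $(1-f)^{-1}\alpha$, which is immediate since $f=g^i$.
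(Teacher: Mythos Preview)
Your approach is essentially the paper's: the paper gives no proof beyond the sentence ``By Remark \ref{Re:Tlambda}, we obtain the following lemma,'' and your sketch is exactly the unpacking of that remark --- $f=g^i$ is fixed-point free of order $n$, so $P_0^f=0$, and conjugation by $\sigma_{(1-g^i)^{-1}\alpha}$ changes the character on the $T$-factor from the trivial one to $\exp(-2\pi\sqrt{-1}(\cdot\mid(1-f)^{-1}\alpha))$, which by the labeling convention of Remark \ref{Re:Tlambda} is $T_\alpha$.

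One small expository point: in one sentence you identify ``conjugating by $\sigma_y$'' with ``applying the $\Delta$-operator associated with $\sigma_y$.'' These are not the same operation --- the $\Delta$-operator would produce a $\sigma_y\hat g^i$-twisted module, whereas conjugation (which is what $M\circ\sigma_y$ means) keeps the twist $\hat g^i$ because $\sigma_y\in C_{\Aut(V_L)}(\hat g)$. What you actually need is the direct computation that $Y_{M\circ\sigma_y}(e^\beta,z)=e^{-2\pi\sqrt{-1}(y|\beta)}Y_M(e^\beta,z)$, which twists the central character of the $T$-factor by $\exp(-2\pi\sqrt{-1}(\cdot|y))$ on the relevant quotient. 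You have already flagged this bookkeeping as the main thing to check, so this is not a gap in your plan, just a place to tighten the wording.
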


\subsection{Irreducible $V_L^{\hat{g}}$-modules and the quadratic form}
In this subsection, we summarize the classification of irreducible $V_L^{\hat{g}}$-modules and review the quadratic form on $\mathrm{Irr}(V_L^{\hat{g}})$. 

It was proved in \cite{DRX} that any irreducible  $V_{L}^{\hat{g}}$-module is a submodule of an irreducible $\hat{g}^i$-twisted $V_{L}$-module for some $0\le i\le n-1$.
Here we identify $V_{\lambda+L}(j)$ with $V_{\lambda+L}[\hat{g}^0](j)$.
By Sections \ref{S:un} and \ref{S:tw} and \cite[Main Theorem 1]{La19}, we obtain the following:

\begin{theorem} Let $L$ be a positive-definite even lattice and let $g$ be an order $n$ fixed-point free isometry of $L$.
Assume that $(1-g)L^*\subset L$.
Then 
$$\irr(V_L^{\hat{g}})=\{V_{\lambda+L}[\hat{g}^i](j)\mid \lambda+L\in \mathcal{D}(L),\ 0\le i,j\le n-1\}$$
In particular, $V_L^{\hat{g}}$ has exactly $|\mathcal{D}(L)|n^2$ inequivalent irreducible modules.
Moreover, any irreducible $V_L^{\hat{g}}$-module is a simple current module.
\end{theorem}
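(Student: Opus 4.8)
The plan is to assemble the theorem from the three ingredients already laid out in the excerpt: the classification of the $V_L^{\hat g}$-submodules of untwisted $V_L$-modules (Section~\ref{S:un}), the classification of the $V_L^{\hat g}$-submodules of the $\hat g^i$-twisted modules $V_{\lambda+L}[\hat g^i]$ (Section~\ref{S:tw}), and H\"ohn's reduction together with La's main theorem. First I would invoke \cite{DRX} to reduce the classification of $\irr(V_L^{\hat g})$ to the irreducible submodules of irreducible $\hat g^i$-twisted $V_L$-modules for $0\le i\le n-1$; this is what makes the list exhaustive. For each fixed $i$, the irreducible $\hat g^i$-twisted $V_L$-modules are exactly the $|\mathcal D(L)|$ modules $V_{\lambda+L}[\hat g^i]$ with $\lambda+L\in\mathcal D(L)$, by \cite{DLM2} as recalled in Section~\ref{S:tw} (here one uses the hypothesis $(1-g)L^*\subset L$, which guarantees $f(\lambda)+L=\lambda+L$ so that the $\Delta$-operator construction applies uniformly). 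Restricting each such module to $V_L^{\hat g}$ and decomposing under the $\hat g$-action — which is possible since $V_{\lambda+L}[\hat g^i]$ is $\hat g$-stable — yields exactly $n$ inequivalent irreducible $V_L^{\hat g}$-summands $V_{\lambda+L}[\hat g^i](j)$, $0\le j\le n-1$, by \cite[Main Theorem~1]{La19}.

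The next step is to verify that these candidates are pairwise inequivalent across different triples $(\lambda+L, i, j)$, so that the count is exactly $|\mathcal D(L)|\,n^2$ with no collapsing. For fixed $i$, inequivalence in $\lambda$ and $j$ is part of La's theorem; for distinct values of $i$, the modules lie over distinct twisted sectors ($\hat g^i$-twisted versus $\hat g^{i'}$-twisted), and since a $\hat g^i$-twisted module and a $\hat g^{i'}$-twisted module cannot be isomorphic as $V_L^{\hat g}$-modules unless they glue compatibly — here one appeals again to \cite{La19}, which organizes $\irr(V_L^{\hat g})$ as a group of order $|\mathcal D(L)|n^2$ under fusion — the union over all $i$ is disjoint. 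Counting then gives $|\mathcal D(L)|n^2$ inequivalent modules, matching the bound, so the list is complete.

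For the simple-current assertion, the cleanest route is to cite \cite[Main Theorem~1]{La19} directly: under the hypothesis $(1-g)L^*\subset L$ (equivalently $g=\mathrm{id}$ on $\mathcal D(L)$), La proves that every irreducible $V_L^{\hat g}$-module is a simple current, equivalently that $\irr(V_L^{\hat g})$ forms an abelian group under fusion product of order $|\mathcal D(L)|n^2$. One can also see this structurally: $V_L^{\hat g}\subset V_L$ is a $\Z_n$-graded simple current extension (the $V_L(j)$ form a cyclic group of order $n$ by \cite{DJX}), and each further extension layer behaves compatibly, so the group generated by the $V_{\lambda+L}(j)$ and the twisted simple currents $V_L[\hat g^i](j)$ exhausts $\irr(V_L^{\hat g})$; since this group has the full cardinality $|\mathcal D(L)|n^2$, every irreducible module is a group element and hence a simple current.

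The main obstacle is the disjointness/no-collapsing verification — making sure that the $n^2|\mathcal D(L)|$ listed modules really are pairwise distinct and that none of the twisted summands coincides with an untwisted one or with a summand from a different power $\hat g^i$. This is exactly the content for which \cite[Main Theorem~1]{La19} is invoked, and it rests on the delicate bookkeeping of conformal weights (formulas \eqref{Eq:cwun} and \eqref{Eq:cw}) together with the fusion-product structure; the labeling conventions for $T_\lambda$ (Remark~\ref{Re:Tlambda}) and for $V_{\lambda+L}[\hat g^i](j)$ must be pinned down consistently, as emphasized in Section~\ref{S:tw}, before the disjointness argument can be stated cleanly. Once \cite{DRX}, \cite{DLM2}, and \cite{La19} are in hand, the proof is essentially a matter of citing these results in the right order and matching cardinalities.
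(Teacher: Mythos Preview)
Your proposal is correct and follows essentially the same approach as the paper: the paper's proof consists only of the sentence ``By Sections~\ref{S:un} and~\ref{S:tw} and \cite[Main Theorem 1]{La19}, we obtain the following,'' together with the preceding invocation of \cite{DRX}, and your argument is an expanded version of exactly this outline. Your additional discussion of disjointness and counting is more detailed than what the paper provides, but it is subsumed in the citation of \cite[Main Theorem~1]{La19}, which already establishes the group structure of order $|\mathcal D(L)|n^2$ on $\irr(V_L^{\hat g})$.
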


By the theorem above, $\irr(V_L^{\hat{g}})$ forms a (finite) abelian group under the fusion product.
Recall that for $M\in\irr(V_L^{\hat{g}})$, the conformal weight $\varepsilon(M)$ is rational.
Let 
\begin{equation}
q:\irr(V_L^{\hat{g}})\to \Q/\Z,\quad M\mapsto \varepsilon(M)\pmod\Z.\label{Eq:qVOA}
\end{equation}
Then the form $\langle\ \mid\ \rangle:\mathrm{Irr}(V_L^{\hat{g}})\times \mathrm{Irr}(V_L^{\hat{g}})\to\Q/\Z$ associated with $q$ is defined by 
\begin{equation}
\langle M^1| M^2\rangle=q(M^1\boxtimes M^2)-q(M^1)-q(M^2)\pmod\Z.\label{Eq:bilinear}
\end{equation}

\begin{theorem}\label{Thm:EMS} {\rm (\cite[Theorem 3.4]{EMS})} $(\mathrm{Irr}(V_L^{\hat{g}}),q)$ is a non-degenerate quadratic space, that is, the form $\langle\ \mid\ \rangle$ is non-degenerate and bilinear.
\end{theorem}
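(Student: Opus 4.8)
The plan is to identify $(\irr(V_L^{\hat g}), q)$ with a quadratic space built out of discriminant forms of twisted modules and then invoke the general principle that such a space is non-degenerate because the underlying conformal weights carry a bilinear pairing that is ``as non-degenerate as'' the corresponding lattice-theoretic pairings. Concretely, since every irreducible $V_L^{\hat g}$-module is a simple current module (by the preceding theorem), the group $\irr(V_L^{\hat g})$ has order $|\mathcal D(L)|\,n^2$, and $q$ is well-defined and $\Q/\Z$-valued by rationality of conformal weights. The first step is to verify that $\langle\ \mid\ \rangle$ as defined in \eqref{Eq:bilinear} is genuinely bilinear: this follows from the fusion-product structure (associativity and commutativity of $\boxtimes$ on simple currents) together with the fact that $q$ is a quadratic form, i.e. $q(M^1 \boxtimes M^2) - q(M^1) - q(M^2)$ is additive in each variable. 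The additivity is the standard polarization identity argument; one checks $\langle M^1 \boxtimes M^3 \mid M^2\rangle = \langle M^1\mid M^2\rangle + \langle M^3 \mid M^2\rangle$ by expanding both sides using \eqref{Eq:bilinear} and the identity $q(A\boxtimes B \boxtimes C) - q(A\boxtimes B) - q(A\boxtimes C) - q(B \boxtimes C) + q(A) + q(B) + q(C) = 0$, which holds because $q$ restricted to any cyclic subgroup is a quadratic function of the exponent (a consequence of the modular/ribbon structure, or can be taken from \cite{EMS}).

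The heart of the matter is non-degeneracy. I would argue as follows. Suppose $M^1 \in \irr(V_L^{\hat g})$ satisfies $\langle M^1 \mid M^2 \rangle = 0$ for all $M^2$. Using the explicit description $\irr(V_L^{\hat g}) = \{V_{\lambda+L}[\hat g^i](j)\}$ from the preceding theorem and the pairing formulas implicit in Lemmas~\ref{Lem:conjhom} and~\ref{Lem:conjun} (which compute $\sigma$-conjugates and hence, via fusion with the simple currents coming from $\hom(L/(1-g)L,\Z/n\Z)$, the values of $\langle\ \mid\ \rangle$ on various pairs), one translates the orthogonality condition into: (a) a condition on the $\mathcal D(L)$-component forcing $\lambda \in L$ via non-degeneracy of the discriminant form $q$ on $\mathcal D(L)$ (Section~\ref{S:lattice}); (b) a condition on the cyclic index $i$ (the ``twist sector'') forcing $i \equiv 0$, because pairing against the untwisted simple currents $V_L(k)$ detects the twist — indeed $\langle V_L(1) \mid V_{\lambda+L}[\hat g^i](j)\rangle$ encodes $i/n \bmod \Z$; and (c) a condition on the internal label $j$ forcing $j \equiv 0$ by pairing against the image of $\hom(L/(1-g)L,\Z/n\Z)$ as in Lemma~\ref{Re:conj}. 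Combining (a), (b), (c) gives $M^1 \cong V_L(0) = V_L^{\hat g}$, the identity of the group, so the radical is trivial.

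The main obstacle I anticipate is step (b)–(c): cleanly extracting the pairing values $\langle V_{\lambda+L}[\hat g^i](j) \mid V_{\mu+L}[\hat g^k](\ell)\rangle$ in sufficient generality. The conformal weight formula \eqref{Eq:cw} for twisted sectors involves the shift vector $v \in (1/|\hat f|)L^*$ and the projection $P_0^f$, so the quadratic form $q$ on the twisted part is not simply a pullback of $q_{\mathcal D(L)}$; one must track how $\varepsilon(V_{\lambda+L}[\hat g^i])$ changes under fusion with the untwisted simple currents, which is governed by Lemma~\ref{Lem:conjhom} and the behavior of $\min(v + P_0^f(\lambda+L))$. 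Rather than compute these directly, the cleaner route — and the one I would actually take — is to cite \cite[Theorem 3.4]{EMS} essentially verbatim: that reference establishes precisely this non-degeneracy for the fixed-point subalgebra of a holomorphic VOA (or more generally for a $C_2$-cofinite rational self-dual VOA all of whose irreducibles are simple currents), using the modular tensor category structure on $\irr(V_L^{\hat g})$ and the fact that the $S$-matrix of a pointed modular category is (up to normalization) the character table of the pairing $\langle\ \mid\ \rangle$, whose non-degeneracy is equivalent to invertibility of $S$. Thus the proof reduces to checking the hypotheses of \cite{EMS}: $V_L^{\hat g}$ is strongly regular (stated in Section~\ref{S:irred}), and every irreducible module is a simple current (the preceding theorem) — both already in hand.
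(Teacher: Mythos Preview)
The paper does not prove this theorem at all: it is stated with the attribution ``\cite[Theorem 3.4]{EMS}'' and no proof is given. Your proposal ultimately arrives at exactly this---you conclude that the cleaner route is to cite \cite[Theorem 3.4]{EMS} after verifying its hypotheses (strong regularity and the simple-current property), both of which are already established in Section~\ref{S:irred} and the preceding theorem. So your final recommendation matches the paper precisely.

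The first two-thirds of your proposal, however, sketches an independent lattice-theoretic argument that is not in the paper and is not complete as written. The bilinearity step is fine in outline (it is indeed a consequence of the ribbon/modular structure), but your non-degeneracy argument via (a)--(c) presupposes explicit pairing formulas of the shape $\langle V_L(1)\mid V_{\lambda+L}[\hat g^i](j)\rangle = i/n$ that are only established later in the paper (via \cite[Theorems 5.3 and 5.8]{La19}, cf.\ \eqref{Eq:quad} and \eqref{Eq:6Gq}) and only for the specific coinvariant lattices $\Lambda_g$ under consideration, not for general $L$ satisfying $(1-g)L^*\subset L$. So that direct route would be circular or at least out of logical order here. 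Your instinct to abandon it and fall back on \cite{EMS} is the right call.
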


The quadratic space structure of $\mathrm{Irr}(V_L^{\hat{g}})$ can be determined by using an even unimodular lattice containing $L$ as in \cite[Theorems 5.3 and 5.8]{La19}; in Section \ref{S:6}, we will explicitly describe these structure when $L$ is the coinvariant lattice associated with some isometry of the Leech lattice.

\section{Extra automorphisms of $V_L^{\hat{g}}$}\label{S:extra}

In this section, we recall from \cite[Sections 4 and 5]{LS21} some extra automorphisms of $V_L^{\hat{g}}$, not in $N_{\Aut(V_L)}(\langle\hat{g}\rangle)/\langle\hat{g}\rangle$, for some $L$ and $g$.

Let $t\in\Z_{>0}$ and let $k_i\in\Z_{\ge2}$, $1\le i\le t$.
Let $n$ be the least common multiple of $k_1,\dots,k_t$.
Let $R_i$ be the root lattice of type $A_{k_i-1}$ and let $R=\bigoplus_{i=1}^tR_i$ be the orthogonal sum of $R_i$.
Fix a base $\Delta_i=\{\alpha_1^i,\dots,\alpha_{k_i-1}^i\}$ of the root system $R_i(2)$ of type $A_{k_i-1}$.
Let $\lambda_1^i,\dots,\lambda_{k_i-1}^i$ be the fundamental weights of $R_i(2)$ with respect to $\alpha_1^i,\dots,\alpha_{k_i-1}^i$.
Then $\Delta=\bigcup_{i=1}^t\Delta_i$ is a base of $R(2)$.
Let $\rho_{\Delta_i}$ be the Weyl vector of $R_i$ and set $$\chi_\Delta=(\frac{\rho_{\Delta_1}}{k_1},\dots,\frac{\rho_{\Delta_t}}{k_t})\in\Q\otimes_\Z R.$$
For $1\le i\le t$ and $1\le j\le k_i-1$, we have $(\rho_{\Delta_i}|\alpha^i_j)=1$.
Then $\{(\chi_\Delta|v)\mid v\in R\}=(1/n)\Z$.
Fix $\gamma\in R$ satisfying 
\begin{equation}
(\gamma|\chi_\Delta)\in \frac1n+\Z.\label{Eq:gamma}
\end{equation}

Let $\nu:R^*\to R^*/R\cong\bigoplus_{i=1}^t\Z_{k_i}$. 
Let $C$ be a subgroup of $R^*/R$ such that the lattice $N=L_A(C)=\nu^{-1}(C)$ is even.
Set 
\begin{equation}
L=L_B(C)=\{v\in N\mid (v|\chi_\Delta)\in\Z\}.\label{Eq:LBC}
\end{equation}

Let $g_{\Delta_i}$ be the Coxeter element of the Weyl group of $R_{i}$ associated with $\Delta_i$, i.e., $g_{\Delta_i}=r_{\alpha_1^i}\cdots r_{\alpha_{k_i-1}^i}$ is a product of simple reflections $r_{\alpha_j^i}$.
Let $e=(e_i)\in\bigoplus_{i=1}^t\Z_{k_i}^\times\subset R^*/R$.
Then $g_{\Delta_i}^{e_i}$ is a fixed-point free isometry of $R_{i}$ of order $k_i$ since $\gcd(e_i,k_i)=1$.
Hence 
$$g_{\Delta,e}=(g_{\Delta_1}^{e_1},\dots,g_{\Delta_t}^{e_t})\in O(N)$$
is a fixed-point free isometry of order $n$.
By \cite[Section 4]{LS21}, we have
\begin{equation}
g_{\Delta,e}(\chi_\Delta)\in\chi_\Delta-\lambda_e+R,\label{Eq:gchi}
\end{equation}
where $\lambda_e=(\lambda_{e_1}^1,\dots,\lambda_{e_t}^t)\in R^*$.

We now assume the following (cf. \cite[Lemmas 4.9 and 4.10]{LS21}):
\begin{enumerate}[(i)]
\item $\chi_\Delta\in (1/n)N^*$;
\item $\lambda_e\in L$, equivalently, $(\lambda_e|\lambda_e)\in2\Z$ (\cite[Lemma 4.8]{LS21}) .
\end{enumerate}
By Assumption (i), we have $|N:L|=|L^*:N^*|=n$.
It follows from $\chi_\Delta\in L^*$, \eqref{Eq:gamma} and \eqref{Eq:LBC} that 
\begin{equation}
N/L=\langle \gamma+L\rangle,\quad L^*/N^*=\langle \chi_\Delta+N^*\rangle.\label{Eq:L^*/N^*}
\end{equation}
Since $g_{\Delta,e}$ belongs to the Weyl group of $N$, we have $g_{\Delta,e}(\lambda+N)=\lambda+N$ for $\lambda+N\in \mathcal{D}(N)$.
By Assumption (ii), we have $\lambda_e\in N^*$ and $g_{\Delta,e}\in O(L)$.
Hence $g_{\Delta,e}(\lambda+L)=\lambda+L$ for $\lambda+L\in N^*/L$.
By \eqref{Eq:gchi} and (ii), $g_{\Delta,e}(\chi_\Delta+L)=\chi_\Delta+L$.
By \eqref{Eq:L^*/N^*}, we have $g_{\Delta,e}(\lambda+L)=\lambda+L$ for all $\lambda+L\in \mathcal{D}(L)$, equivalently, 
\begin{equation}
(1-g_{\Delta,e})L^*\subset L.
\end{equation}

Let $\hat{g}\in\Aut(V_N)$ be a (standard) lift of $g_{\Delta,e}$ and let 
$$\varphi=\sigma_{-\chi_\Delta}
=\exp(2\pi\sqrt{-1}(\chi_\Delta)_{(0)})\in \Aut(V_N).$$
Then $|\varphi|=n$ by Assumption (i), and 
$\varphi$ commutes with $\hat{g}$ 
by Assumption (ii).
By \cite[Section 5.3]{LS21}, there exists $ \zeta\in\Aut(V_N)$ such that
\begin{equation}
\zeta^{-1}\varphi\zeta=\hat{g}^{-1}\quad\text{and}\quad \zeta^{-1}\hat{g}\zeta=\varphi\quad\text{on}\quad V_N.\label{Eq:relations}
\end{equation}
Hence $V_N^{\langle\hat{g},\varphi\rangle}=V_L^{\hat{g}}$ and $\zeta\in\Aut(V_L^{\hat{g}})$.
Thus we obtain the following:

\begin{theorem}[{\cite[Theorem 5.3]{LS21}}] \label{T:extra} Let $N=L_A(C)$ and $L=L_B(C)$ be the even lattices as above.
Let $\hat{g}\in\Aut(V_L)$ be a (standard) lift of $g_{\Delta,e}$.
Then there exists an automorphism $\zeta$ of $V_{L}^{\hat{g}}$ such that $V_{L}\circ\zeta\cong V_{N}^{\hat{g}}$.
In particular, $\{V_{j\gamma+L}(0)\circ\zeta\mid 0\le j\le n-1\}=\{V_L(j)\mid 0\le j\le n-1\}$ as subsets of $\irr(V_L^{\hat{g}})$.
\end{theorem}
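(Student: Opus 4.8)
The plan is to verify the hypotheses of \cite[Theorem 5.3]{LS21} and then read off its conclusion, tracking carefully how the isomorphism $V_N^{\hat g}\cong V_L\circ\zeta$ interacts with the $\Z_n$-graded simple current structure. First I would recall the setup assembled just above the statement: the lattices $N=L_A(C)=\nu^{-1}(C)$ and $L=L_B(C)=\{v\in N\mid (v|\chi_\Delta)\in\Z\}$, the Coxeter-type isometry $g_{\Delta,e}\in O(N)\cap O(L)$, and the two commuting order-$n$ automorphisms $\hat g$ (a standard lift of $g_{\Delta,e}$) and $\varphi=\sigma_{-\chi_\Delta}=\exp(2\pi\sqrt{-1}(\chi_\Delta)_{(0)})$ of $V_N$. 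Assumptions (i) $\chi_\Delta\in(1/n)N^*$ and (ii) $\lambda_e\in L$ guarantee $|\varphi|=n$ and $[\varphi,\hat g]=1$, and the intermediate computation shows $(1-g_{\Delta,e})L^*\subset L$, so that the whole apparatus of Section~\ref{S:irred} applies to $V_L^{\hat g}$ (equivalently to $V_N^{\hat g}$, since $V_N^{\langle\hat g,\varphi\rangle}=V_L^{\hat g}$).

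The heart of the argument is relation \eqref{Eq:relations}: there is $\zeta\in\Aut(V_N)$ with $\zeta^{-1}\varphi\zeta=\hat g^{-1}$ and $\zeta^{-1}\hat g\zeta=\varphi$ on $V_N$. This is precisely the content of \cite[Section 5.3]{LS21}, and I would simply invoke it. Given $\zeta$, the fixed-point subalgebra $V_N^{\langle\hat g,\varphi\rangle}$ is $\zeta$-stable because $\zeta$ normalizes $\langle\hat g,\varphi\rangle$ (it swaps the two "coordinate directions" of this rank-two elementary structure, up to inversion); since $V_N^{\langle\hat g,\varphi\rangle}=V_L^{\hat g}$, we get $\zeta\in\Aut(V_L^{\hat g})$ by restriction. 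Next I would identify $V_L$ as a module: inside $V_N$, the subspace $V_L$ is exactly the $\varphi$-fixed part $V_N^\varphi$ of $V_N$ (because $\varphi=\sigma_{-\chi_\Delta}$ acts on $e^\alpha$ by $\exp(-2\pi\sqrt{-1}(\chi_\Delta|\alpha))$ and $V_L=\{v\in N\mid(v|\chi_\Delta)\in\Z\}$ — together with the Heisenberg part on which $\varphi$ acts trivially). Applying $\zeta$ and using $\zeta^{-1}\varphi\zeta=\hat g^{-1}$, we get $V_L\circ\zeta=(V_N^\varphi)\circ\zeta\cong V_N^{\hat g^{-1}}=V_N^{\hat g}$ as $V_L^{\hat g}$-modules (the last equality because $\langle\hat g\rangle=\langle\hat g^{-1}\rangle$). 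That proves the first assertion $V_L\circ\zeta\cong V_N^{\hat g}$.

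For the last sentence, I would decompose both sides into irreducible $V_L^{\hat g}$-modules. On one hand, $V_L$ is a $\Z_n$-graded simple current extension of $V_L^{\hat g}$ with graded pieces $\{V_L(j)\mid 0\le j\le n-1\}$ (as recalled before Proposition~\ref{P:stabVL1}, from \cite{DJX}). On the other hand, $V_N$ is a $\Z_n$-graded extension of $V_N^{\hat g}=V_L^{\hat g}$ whose graded pieces, with respect to the $\varphi$-eigenspace grading, are $V_N^{\hat g}$-modules; and since $N/L=\langle\gamma+L\rangle$ by \eqref{Eq:L^*/N^*}, the $\varphi$-eigenspace decomposition of $V_N$ is $V_N=\bigoplus_{j=0}^{n-1}(V_{j\gamma+N^{?}})$ — more precisely, the $\exp(2\pi\sqrt{-1}j/n)$-eigenspace of $\varphi$ on $V_N$ is the $V_N^\varphi=V_L$-module $V_{j\gamma+L}$, and its $\hat g$-fixed part is $V_{j\gamma+L}(0)$. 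Hence $V_N^{\hat g}=\bigoplus_{j=0}^{n-1}V_{j\gamma+L}(0)$ as $V_L^{\hat g}$-modules. Applying $\zeta^{-1}$ (equivalently, reading the isomorphism $V_L\circ\zeta\cong V_N^{\hat g}$ the other way) and using Lemma~\ref{Lem:conj}, the $n$ summands $V_L(j)$ of $V_L$ are carried bijectively onto the $n$ summands $V_{j\gamma+L}(0)$ of $V_N^{\hat g}$; that is, $\{V_{j\gamma+L}(0)\circ\zeta\mid 0\le j\le n-1\}=\{V_L(j)\mid 0\le j\le n-1\}$ as subsets of $\irr(V_L^{\hat g})$. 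The main obstacle, and the one place I would be most careful, is the bookkeeping in this last step: matching the two $\Z_n$-gradings correctly, i.e.\ checking that the $\varphi$-eigenspace of $V_N$ really is the $V_L$-module $V_{j\gamma+L}$ and that its $\hat g$-invariants give $V_{j\gamma+L}(0)$ rather than some other $V_{j\gamma+L}(k)$ — but this is exactly what is packaged in \cite[Theorem 5.3]{LS21} via the relations \eqref{Eq:relations}, so in the write-up I would lean on that reference and only spell out the module identification $V_L=V_N^\varphi$ explicitly.
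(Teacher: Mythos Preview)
Your proposal is correct and follows essentially the same approach as the paper. In the paper this theorem is not given a separate proof; the argument is the short paragraph immediately preceding the statement, which (i) invokes \cite[Section 5.3]{LS21} for the existence of $\zeta\in\Aut(V_N)$ satisfying \eqref{Eq:relations}, (ii) observes $V_N^{\langle\hat g,\varphi\rangle}=V_L^{\hat g}$ so that $\zeta\in\Aut(V_L^{\hat g})$, and then simply cites \cite[Theorem 5.3]{LS21} for the conclusion. You spell out what the paper leaves implicit---the identification $V_L=V_N^{\varphi}$, the deduction $V_L\circ\zeta\cong V_N^{\hat g}$ from the conjugation relations, and the matching of irreducible summands $\{V_{j\gamma+L}(0)\}$ with $\{V_L(j)\}$---but the logical skeleton is the same.
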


\begin{remark} By Proposition \ref{P:stabVL1} (2), the automorphism $\zeta$ in Theorem \ref{T:extra} does not belong to $N_{\Aut(V_L)}(\langle\hat{g}\rangle)/\langle\hat{g}\rangle$; such an automorphism is called an extra automorphism in \cite{LS21}.
\end{remark}

Next, we discuss the $\zeta$-conjugates of some irreducible $V_L^{\hat{g}}$-modules.
\begin{lemma}\label{phiandgtwisted}
For any $1\le i\le n-1$ and $\lambda+N\in \mathcal{D}(N)$, $V_{\lambda+N}[\hat{g}^i]\circ \zeta$ is an irreducible $\varphi^i$-twisted $V_N$-module.
In particular, there exists $\mu \in L^*\setminus N^*$ such that 
as $\varphi^i$-twisted $V_N$-modules,
\[
V_{\lambda+N}[\hat{g}^i]\circ \zeta \cong V_{\mu+N}.
\]

\end{lemma}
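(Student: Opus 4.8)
The plan is to track what the relations \eqref{Eq:relations} force on twisted modules. Recall that $\zeta$ satisfies $\zeta^{-1}\varphi\zeta=\hat g^{-1}$ and $\zeta^{-1}\hat g\zeta=\varphi$ on $V_N$, and that $V_N^{\langle\hat g,\varphi\rangle}=V_L^{\hat g}$. The key observation is that the $\zeta$-conjugate of a $\hat g^i$-twisted $V_N$-module is, by Definition \ref{Mconj}, a $\zeta^{-1}\hat g^i\zeta$-twisted $V_N$-module, and by the second relation in \eqref{Eq:relations} we have $\zeta^{-1}\hat g^i\zeta=\varphi^i$. So $V_{\lambda+N}[\hat g^i]\circ\zeta$ is a $\varphi^i$-twisted $V_N$-module. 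Since $V_{\lambda+N}[\hat g^i]$ is irreducible as a $V_L^{\hat g}$-module? — no; rather it is irreducible as a $\hat g^i$-twisted $V_N$-module, and Lemma \ref{Lem:conj}(1) (or its twisted analogue, which holds by the same character/fusion argument used there) shows $\zeta$-conjugation preserves irreducibility. Hence $V_{\lambda+N}[\hat g^i]\circ\zeta$ is an irreducible $\varphi^i$-twisted $V_N$-module.

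The second step is to identify which irreducible $\varphi^i$-twisted $V_N$-module it is. Here I invoke Remark \ref{R:intwist}: since $\varphi=\sigma_{-\chi_\Delta}$ with $-\chi_\Delta\in(1/n)N^*$ (Assumption (i)), the automorphism $\varphi^i=\sigma_{-i\chi_\Delta}$ is of the form $\sigma_v$ with $v=-i\chi_\Delta\in(1/n)N^*\subset(1/s)N^*$ for suitable $s$, and every irreducible $\sigma_v$-twisted $V_N$-module is of the form $V_{v+\beta+N}=V_{-i\chi_\Delta+\beta+N}$ for some $\beta+N\in\mathcal D(N)$, with its $\sigma_v$-twisted structure obtained by the $\Delta$-operator construction of \cite{Li}. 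Writing $\mu=-i\chi_\Delta+\beta$, this $\mu$ lies in the coset $-i\chi_\Delta+N^* $; by \eqref{Eq:L^*/N^*} we have $L^*/N^*=\langle\chi_\Delta+N^*\rangle$ of order $n$, so $-i\chi_\Delta+N^*\subset L^*$, and since $\gcd(n,i)\ne 0$ forces, for the cases of interest, $i\chi_\Delta\notin N^*$ (indeed $\chi_\Delta$ has order $n$ modulo $N^*$ and $1\le i\le n-1$), we get $\mu\in L^*\setminus N^*$. Thus $V_{\lambda+N}[\hat g^i]\circ\zeta\cong V_{\mu+N}$ as $\varphi^i$-twisted $V_N$-modules, with $\mu\in L^*\setminus N^*$.

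A point needing a little care: one must check that $i\chi_\Delta\notin N^*$ for all $1\le i\le n-1$, i.e. that $\chi_\Delta+N^*$ genuinely has order $n$ in $L^*/N^*$; this is exactly what \eqref{Eq:L^*/N^*} together with $|L^*:N^*|=n$ (from Assumption (i)) gives, since $L^*/N^*$ is cyclic generated by $\chi_\Delta+N^*$ and has order $n$. Hence $\mu$ cannot lie in $N^*$. I also want to make sure the $\Delta$-operator identification respects the twisted-module structure and not merely the underlying graded vector space — but this is precisely the content of \cite{Li,DLM2} as quoted in Remark \ref{R:intwist}, so it can be cited rather than reproved.

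The main obstacle, such as it is, is bookkeeping rather than conceptual: one must correctly match up the action of $\varphi^i$ on the conjugated module with the standard $\sigma_v$-twisted structure, i.e. verify that the module isomorphism $V_{\lambda+N}[\hat g^i]\to V_{\lambda+N}[\hat g^i]\circ\zeta$ intertwines the vertex operators so that the resulting object is literally (not just up to the choice of twisting automorphism within its cyclic group) a $\varphi^i$-twisted module in the normalization of \cite{AbeLY}. Once that normalization is fixed, the classification of irreducible $\sigma_v$-twisted $V_N$-modules in Remark \ref{R:intwist} pins down the answer and the containment $\mu\in L^*\setminus N^*$ follows from \eqref{Eq:L^*/N^*}. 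I expect the actual proof in the paper to be only a few lines, combining exactly these citations.
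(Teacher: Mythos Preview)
Your argument is correct and follows essentially the same route as the paper: invoke Definition \ref{Mconj} and the relation $\zeta^{-1}\hat g\zeta=\varphi$ from \eqref{Eq:relations} to see that the conjugate is an irreducible $\varphi^i$-twisted $V_N$-module, then use Remark \ref{R:intwist} together with $\varphi^i=\sigma_{-i\chi_\Delta}$ and $i\chi_\Delta\in L^*\setminus N^*$ (from \eqref{Eq:L^*/N^*}) to obtain the desired form $V_{\mu+N}$ with $\mu\in L^*\setminus N^*$. The paper's proof is simply a condensed two-sentence version of what you wrote.
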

\begin{proof}
By \eqref{Eq:relations}, the $\zeta$-conjugate of the irreducible  
$\hat{g}^i$-twisted $V_N$-module is an irreducible $\varphi^i$-twisted $V_N$-module (see Definition \ref{Mconj}). 
Since $\varphi^i=\sigma_{-i\chi_{\Delta}}$ and $i\chi_{\Delta}\in L^*\setminus N^*$ if $i\notin n\Z$, we obtain the latter assertion from Remark \ref{R:intwist}.
\end{proof}

\begin{proposition}\label{P:extra} 
Let $0\le k\le n-1$ and set $\tilde{g}_k= \varphi^k \hat{g}$. 
Set $v_k= (1-g)^{-1} (k\chi_{\Delta})$ and $h_k=\sigma_{v_k} \zeta\in\Aut(V_N)$.
\begin{enumerate}[{\rm (1)}]
\item $h_k^{-1} \tilde{g}_k h_k =\varphi$ and $h_k^{-1}\varphi h_k=\hat{g}^{-1}$.
\item For $0\le i,j\le n-1$, as $V_{L}^{\hat{g}}$-modules, $$V_{j\gamma+L}(i)\cong V_{i\gamma+L}(ik-j)\circ h_k.$$
\end{enumerate}
\end{proposition}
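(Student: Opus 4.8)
The plan is to reduce both assertions of Proposition \ref{P:extra} to the basic relation \eqref{Eq:relations} together with the $\sigma$-conjugation formulas of Section \ref{S:irred}. First I would establish (1) by a direct computation with the defining relations. Since $\varphi$ commutes with $\hat g$ (Assumption (ii)) and $\varphi=\sigma_{-\chi_\Delta}$, the element $\tilde g_k=\varphi^k\hat g$ satisfies $\zeta^{-1}\tilde g_k\zeta=\zeta^{-1}\varphi^k\zeta\cdot\zeta^{-1}\hat g\zeta=\hat g^{-k}\varphi$ by \eqref{Eq:relations}. Conjugating further by $\sigma_{v_k}$, and using that $\sigma_{v_k}$ commutes with $\hat g$ (as $v_k=(1-g)^{-1}(k\chi_\Delta)\in(1-g)^{-1}L^*$, see \eqref{Eq:hom}) while $\sigma_{v_k}\varphi\sigma_{v_k}^{-1}=\sigma_{v_k}\sigma_{-\chi_\Delta}\sigma_{v_k}^{-1}=\sigma_{-\chi_\Delta}=\varphi$ since the $\sigma$'s commute among themselves, one gets $h_k^{-1}\tilde g_k h_k=\sigma_{v_k}^{-1}\hat g^{-k}\varphi\sigma_{v_k}$. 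The point is to check that $\sigma_{v_k}^{-1}\hat g^{-k}\varphi\sigma_{v_k}$ collapses to $\varphi$: writing $\hat g^{-k}\varphi\sigma_{v_k}\hat g^{k}=\varphi\sigma_{g^{-k}v_k}$ and using $(1-g)v_k=k\chi_\Delta$, i.e. $g^{-k}v_k-v_k$ telescopes against $k\chi_\Delta$ modulo $L^*$ in the right way, the $\sigma$-factors cancel. The second relation $h_k^{-1}\varphi h_k=\hat g^{-1}$ follows from $\zeta^{-1}\varphi\zeta=\hat g^{-1}$ in \eqref{Eq:relations} and the fact that $\sigma_{v_k}$ commutes with both $\varphi$ and $\hat g$.

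Next I would prove (2). The module $V_{j\gamma+L}(i)$ is an irreducible $V_L^{\hat g}=V_N^{\langle\hat g,\varphi\rangle}$-module, and the strategy is to track it through the chain of conjugations dictated by part (1). By Theorem \ref{T:extra} and its proof, $\zeta$ sends $V_N(1)$-type modules to $V_L(1)$-type modules; more precisely one knows how $\zeta$ acts on the $V_{j\gamma+L}(0)$. To handle general $(i,j)$ I would first express $V_{j\gamma+L}(i)$ in terms of a single ``reference'' module acted on by elements of $\hom(L/(1-g)L,\Z/n\Z)$ using Lemma \ref{Lem:conjhom}: since $N/L=\langle\gamma+L\rangle$ and $L^*/N^*=\langle\chi_\Delta+N^*\rangle$ with $(\gamma|\chi_\Delta)\in\frac1n+\Z$ by \eqref{Eq:gamma}, the module $V_{j\gamma+L}(i)$ is obtained from $V_{j\gamma+L}(0)$, or rather from the appropriate twisted module, by $\sigma$-conjugation. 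Then I apply $h_k=\sigma_{v_k}\zeta$: the $\zeta$ part converts the $\varphi$-eigenvalue data into $\hat g$-eigenvalue data via \eqref{Eq:relations} (cf. Lemma \ref{phiandgtwisted} for the twisted analogue), while $\sigma_{v_k}=\sigma_{(1-g)^{-1}(k\chi_\Delta)}$ shifts the $\hat g$-grading by an amount governed by $(k\chi_\Delta|\,\cdot\,)$, and one reads off from Lemma \ref{Lem:conjhom} exactly the shift $ik-j$ in the index. Keeping careful track of which lattice coset labels the module (the $j\gamma$ versus $i\gamma$ swap) and of the conformal-weight normalization, the isomorphism $V_{j\gamma+L}(i)\cong V_{i\gamma+L}(ik-j)\circ h_k$ drops out.

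The main obstacle will be the bookkeeping in part (2): correctly matching the labeling conventions for the irreducible $V_L^{\hat g}$-submodules $V_{\lambda+L}[\hat g^i](j)$ (which, as noted after Lemma \ref{Lem:conjun}, is fixed only later via conformal weights and fusion rules) with the labeling that arises after applying $\zeta$, where the roles of $\hat g$ and $\varphi$ are interchanged. In particular one must verify that the index shift produced by $\sigma_{v_k}$ is literally $ik-j$ and not some other affine function of $(i,j,k)$, which requires pinning down the sign in $\varphi=\sigma_{-\chi_\Delta}$ and in the character labeling $T_\lambda\leftrightarrow\exp(-2\pi\sqrt{-1}(\cdot|(1-f)^{-1}\lambda))$ of Remark \ref{Re:Tlambda}. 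I expect the cleanest route is to verify the identity first on the ``untwisted'' pieces $V_{j\gamma+L}(i)$ with $i$ replaced by a genuine eigenvalue index where Lemma \ref{Lem:conjhom} applies directly, and then invoke the simple-current group structure of $\irr(V_L^{\hat g})$ (every module is a simple current, so conjugation is determined by its effect on generators and is compatible with fusion by Lemma \ref{Lem:conj}(2)) to extend to all of $\irr(V_L^{\hat g})$ by linearity over $\Z/n\Z$.
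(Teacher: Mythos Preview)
Your argument for (1) has the conjugation order backwards. Since $h_k=\sigma_{v_k}\zeta$, one has $h_k^{-1}\tilde g_kh_k=\zeta^{-1}\bigl(\sigma_{v_k}^{-1}\tilde g_k\sigma_{v_k}\bigr)\zeta$, not $\sigma_{v_k}^{-1}\bigl(\zeta^{-1}\tilde g_k\zeta\bigr)\sigma_{v_k}$ as you wrote. With the correct order your proposed ``collapse'' cannot work: you yourself note that $\sigma_{v_k}$ commutes with both $\hat g$ and $\varphi$, so $\sigma_{v_k}^{-1}\hat g^{-k}\varphi\,\sigma_{v_k}=\hat g^{-k}\varphi$, which is $\varphi$ only for $k=0$. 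The paper's route is to first observe (via \cite[Lemma~4.5]{LS19}) that $\tilde g_k=\varphi^k\hat g=\sigma_{v_k}\hat g\,\sigma_{v_k}^{-1}$; then $\sigma_{v_k}^{-1}\tilde g_k\sigma_{v_k}=\hat g$ and applying $\zeta^{-1}(\cdot)\zeta$ via \eqref{Eq:relations} gives $\varphi$ immediately. The second relation follows because $\sigma_{v_k}$ commutes with $\varphi$.

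For (2) your plan is workable but far heavier than necessary, and the ``main obstacle'' you flag (matching labelling conventions through $\zeta$, Remark~\ref{Re:Tlambda}, etc.) never arises in the paper's argument. The key observation you are missing is that every $V_{j\gamma+L}(i)$ is literally a \emph{subspace} of $V_N$, namely the common eigenspace of the commuting operators $\varphi$ and $\hat g$ on $V_N$ with eigenvalues $\exp(-2\pi\sqrt{-1}\,j/n)$ and $\exp(2\pi\sqrt{-1}\,i/n)$. Since $h_k\in\Aut(V_N)$, the image $h_k(V_{j\gamma+L}(i))$ is again a common eigenspace, and the eigenvalues are determined by the relations $h_k^{-1}\varphi h_k=\hat g^{-1}$ and $h_k^{-1}\hat g h_k=\hat g^{k}\varphi$ (the latter obtained from (1) by eliminating $\tilde g_k$). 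Reading off the new eigenvalues gives $h_k(V_{j\gamma+L}(i))=V_{i\gamma+L}(ik-j)$ directly, with no module-theoretic bookkeeping, no appeal to Lemma~\ref{Lem:conjhom}, and no need to invoke the simple-current structure.
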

\begin{proof}
(1) By \cite[Lemma 4.5]{LS19}, $\tilde{g}_k=\varphi^k \hat{g} = \sigma_{v_k} \hat{g} \sigma_{v_k}^{-1}$. It then follows from \eqref{Eq:relations} that 
$h_k^{-1} \tilde{g}_k h_k =\varphi $.
Since 
$\sigma_{v_k}$ commutes with $\varphi$, 
we obtain $h_k^{-1}\varphi h_k=\hat{g}^{-1}$ by \eqref{Eq:relations}.

(2) Consider the actions of $\varphi$ and $\hat{g}$ on $V_N$.
Then $V_{p\gamma+L}(q)$ is the common eigenspace of $\varphi$ and $\hat{g}$ with eigenvalues $\exp(2\pi\sqrt{-1}(-p)/n)$ and $\exp(2\pi\sqrt{-1}q/n)$, respectively.
By (1), we have $h_k^{-1}\hat{g}h_k=\hat{g}^{k}\varphi$ and $h_k^{-1}\varphi h_k=\hat{g}^{-1}$.
Hence
$h_k(V_{j\gamma+L}(i))$ is the common eigenspace of $\varphi$ and $\hat{g}$ with eigenvalues $\exp(2\pi\sqrt{-1}(-i/n))$ and $\exp(2\pi\sqrt{-1}({ik-j})/n)$, respectively.
Hence $h_k(V_{j\gamma+L}(i))=V_{i\gamma+L}(ik-j)$, 
which proves the assertion.
\end{proof}

\begin{proposition}\label{untwisted}
Under the action of $\Aut(V_{L}^{\hat{g}})$, any irreducible $V_{L}^{\hat{g}}$-module is conjugate to $V_{\mu+L}(\ell)$  for some $\mu+L\in\mathcal{D}(L)$ and $0\le \ell\le n-1$.
\end{proposition}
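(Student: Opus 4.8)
The plan is to use the extra automorphisms $h_k$ of Proposition \ref{P:extra} to transport the twisting element of an arbitrary irreducible module into a power of $\varphi=\sigma_{-\chi_\Delta}$, which is harmless because $\varphi$ acts trivially on $V_L=V_N^{\varphi}$. Concretely: since $V_L^{\hat g}=V_N^{\langle\hat g,\varphi\rangle}$, applying \cite{DRX} to $V_N$ and the finite abelian group $\langle\hat g,\varphi\rangle$ shows that any irreducible $V_L^{\hat g}$-module $M$ is a $V_L^{\hat g}$-submodule of some irreducible $(\hat g^p\varphi^q)$-twisted $V_N$-module $W$, with $0\le p,q\le n-1$. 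If $p\equiv 0\pmod n$, then $W$ is $\varphi^{q}$-twisted; as $\varphi^q$ is the identity on $V_L$, the restriction $W|_{V_L}$ is an honest untwisted $V_L$-module, hence a finite direct sum of the $V_{c+L}$ with $c+L\in\mathcal D(L)$, and restricting further to $V_L^{\hat g}$ — each such coset $c+L$ being $\hat g$-stable by \eqref{Eq:Assump} — exhibits $W|_{V_L^{\hat g}}$ as a direct sum of modules of the form $V_{c+L}(\ell)$. So in this case $M$ itself is of untwisted type, and the whole problem is to reduce the general case to $p\equiv 0$.

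For that I would conjugate $M$ by a suitable product of the $h_k$'s. By Proposition \ref{P:extra}(1) each $h_k$ lies in $\Aut(V_N)$ and satisfies $h_k^{-1}\hat g h_k=\hat g^k\varphi$ and $h_k^{-1}\varphi h_k=\hat g^{-1}$; in particular $h_k$ normalizes $\langle\hat g,\varphi\rangle$, hence restricts to an automorphism of $V_N^{\langle\hat g,\varphi\rangle}=V_L^{\hat g}$. Moreover $h_k^{-1}(\hat g^a\varphi^b)h_k=\hat g^{ka-b}\varphi^{a}$, so conjugation by $h_k$ turns an $(\hat g^a\varphi^b)$-twisted $V_N$-module into an $(\hat g^{ka-b}\varphi^{a})$-twisted one; that is, on the exponent vector $(a,b)$ of the twisting element it acts by $\left(\begin{smallmatrix}k&-1\\1&0\end{smallmatrix}\right)\in\SL_2(\Z/n\Z)$. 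As $k$ varies, these matrices include $\left(\begin{smallmatrix}0&-1\\1&0\end{smallmatrix}\right)$ (take $k=0$, so $h_0=\zeta$) and, after multiplying, every $\left(\begin{smallmatrix}1&k\\0&1\end{smallmatrix}\right)$, and hence generate all of $\SL_2(\Z/n\Z)$; by the standard reduction modulo $n$ the vector $(p,q)$ can be carried to one with first coordinate $0$ (indeed to $(0,\gcd(p,q,n))$). Composing the corresponding $h_k$'s yields $h\in\Aut(V_L^{\hat g})$ such that $M\circ h$ embeds in a $\varphi^{e}$-twisted $V_N$-module; by the first paragraph $M\circ h$ is of untwisted type, so $M$ is conjugate to some $V_{\mu+L}(\ell)$.

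The step needing the most care is the first one: reconciling the parametrization of $\irr(V_L^{\hat g})$ by $\hat g^i$-twisted $V_L$-modules used in Section \ref{S:irred} with the parametrization by $\langle\hat g,\varphi\rangle$-twisted $V_N$-modules used here, and in particular checking that an arbitrary irreducible $V_L^{\hat g}$-module really does occur in some $(\hat g^p\varphi^q)$-twisted $V_N$-sector, and that the $\hat g$- and $\varphi$-gradings behave correctly under restriction along $V_L^{\hat g}\subset V_L\subset V_N$ and under conjugation by $h_k$. In contrast to the prime-order situation one cannot just invoke Lemma \ref{phiandgtwisted}, since $M$ may sit in a mixed sector with $q\neq 0$; this is exactly why both $\zeta=h_0$ and the variants $h_k$ with $k\neq 0$ are needed, and why no separate argument for the case $\gcd(i,n)>1$ is required. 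The remaining ingredients — that $h_k\in\Aut(V_L^{\hat g})$, the action on the twist exponent, and the $\SL_2(\Z/n\Z)$-reduction — are then immediate.
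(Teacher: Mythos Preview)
Your argument is correct and is, in fact, a cleaner reorganization of the paper's own proof. Both proofs rest on the same automorphisms $h_k$ of Proposition \ref{P:extra}, but the paper works with the parametrization of $\irr(V_L^{\hat g})$ by $\hat g^i$-twisted $V_L$-modules from Section \ref{S:irred} and proceeds by a three-way case split: (a) if $\lambda\in N^*$, apply $\zeta$ directly via Lemma \ref{phiandgtwisted}; (b) if $\gcd(n,i)=1$, use Lemma \ref{Lem:conjun} to reduce to (a); (c) otherwise write $\lambda=k\chi_\Delta+\lambda'$, interpret $V_{\lambda+N}[\hat g^i]$ as a $\varphi^{-k}\hat g^i$-twisted $V_N$-module via Remark \ref{R:intwist}, choose $s,r$ with $-k=si-r$ and $0\le r<i$, and use $h_s$ to lower the $\hat g$-exponent from $i$ to $r$, iterating until $r=0$. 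In other words, the paper runs a hand-coded Euclidean algorithm on the pair of exponents.

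Your route bypasses all of this by passing at the outset to $V_N$ and the abelian group $\langle\hat g,\varphi\rangle$: invoking \cite{DRX} (together with \cite{CM,Mi} for regularity of $V_N^{\langle\hat g,\varphi\rangle}=V_L^{\hat g}$, which is valid since the group is abelian, hence solvable) places any $M$ inside a $(\hat g^p\varphi^q)$-twisted $V_N$-module, and the computation $h_k^{-1}(\hat g^a\varphi^b)h_k=\hat g^{ka-b}\varphi^a$ shows the $h_k$ act on the exponent pair through the matrices $\left(\begin{smallmatrix}k&-1\\1&0\end{smallmatrix}\right)$, which generate $\SL_2(\Z/n\Z)$. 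The paper's Euclidean step is precisely one iteration of this $\SL_2$-action, so your argument subsumes the paper's case (c) and renders the separate treatment of cases (a) and (b) unnecessary. What your framing buys is uniformity and the elimination of Lemma \ref{Lem:conjun} from the argument; what the paper's framing buys is that it never needs \cite{DRX} for a non-cyclic group, staying entirely within the cyclic-orbifold setup of Section \ref{S:irred}.
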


\begin{proof}
Let $V_{\lambda+L}[\hat{g}^i](q)$ be an irreducible $V_{L}^{\hat{g}}$-module.
Here $\lambda+L\in \mathcal{D}(L)$ and $0\le i,q\le n-1$.
Clearly, if $i=0$, then there is nothing to prove; we assume that $i\neq0$.

First, we assume that $\lambda\in N^*$.
By Lemma \ref{phiandgtwisted}, there exists $\mu\in L^*\setminus N^*$ such that as $\varphi^i$-twisted $V_N$-modules, 
\begin{equation}
V_{\lambda+N}[\hat{g}^i] \circ \zeta \cong V_{\mu+N}.\label{Eq:c1}
\end{equation}
Recall that 
$V_{\lambda+N}[\hat{g}^i] = \bigoplus_{j=0}^{n-1} V_{\lambda+j\gamma +L}[\hat{g}^i]
$
and 
$V_{\mu+N}= \bigoplus_{j=0}^{n-1} V_{\mu+j\gamma+L}  $
are direct sums of irreducible $\hat{g}^i$-twisted $V_{L}$-modules and irreducible $V_{L}$-modules, respectively.
Since $(1-g)L^*\subset L$, the $V_{L}$-module $V_{\mu+j\gamma+L}$ is $\hat{g}$-stable and thus, as $V_{L}^{\hat{g}}$-modules,
\begin{equation}
V_{\lambda+N}[\hat{g}^i] = \bigoplus_{j=0}^{n-1}\bigoplus_{k=0}^{n-1} V_{\lambda+j\gamma +L}[\hat{g}^i](k)\quad \text{and}\quad
V_{\mu +N } = \bigoplus_{j=0}^{n-1} \bigoplus_{k=0}^{n-1}V_{\mu+j\gamma+L} (k).\label{Eq:c2}
\end{equation}
Comparing \eqref{Eq:c1} with \eqref{Eq:c2}, we obtain as $V_{L}^{\hat{g}}$-modules, 
$$V_{\lambda+L}[\hat{g}^i](q)\circ\zeta\cong V_{\mu'+L}(\ell)$$ for some $\mu' \in L^*$ and for some $0 \leq \ell \leq n-1$. 

Next, we assume that $\gcd(n,i)=1$.
By Lemma \ref{Lem:conjun}, the irreducible $\hat{g}^i$-twisted $V_{L}$-module $V_{\lambda+L}[\hat{g}^i]$ is conjugate to $V_{L}[\hat{g}^i]$ by some automorphism of $V_{L}^{\hat{g}}$.
Hence, the assertion also holds by the case $\lambda\in N^*$ above.

Let us discuss the remaining case, namely, $\lambda \in L^*\setminus N^*$ and $\gcd(n,i)\neq 1$. 
By \eqref{Eq:L^*/N^*}, there exist $1 \leq k  \leq n-1$ and $\lambda'\in N^*$ such that $\lambda= k \chi_\Delta +\lambda'$.
By Remark \ref{R:intwist}, the vector space
$V_{k \chi_\Delta +\lambda' +N}[\hat{g}^i]$ forms an irreducible $\varphi^{-k} \hat{g}^i$-twisted  $V_N$-module. 
Take $s\in\Z$ and $0\le r<i$ such that $-k= si -r$.
Since $\hat{g}$ and $\varphi$ are commutative, we have $\varphi^{-k} \hat{g}^i= \varphi^{-r} (\varphi^{s} \hat{g})^i$. 

Now let $h_s\in \Aut(V_N)$ be defined as in Proposition \ref{P:extra}.  
Then $h_s$ and $\varphi$ are commutative, and hence by Proposition \ref{P:extra} (1), we have $h_s^{-1} (\varphi^{-k} \hat{g}^i) h_s =h_s^{-1}(\varphi^{-r}(\varphi^s\hat{g})^i)h_s=\hat{g}^r \varphi^i$.  
Thus, $V_{k \chi_\Delta +\lambda'+N}[\hat{g}^i]\circ h_s$  is an irreducible $\hat{g}^r \varphi^i$-twisted $V_N$-module.
Recall from $\varphi=id$ on $V_{L}$ that $V_{\lambda+L}[\hat{g}^i]$ is an irreducible $\hat{g}^i$-twisted $V_{L}$-submodule of $V_{k \chi_\Delta +\lambda' +N}[\hat{g}^i]$.
Hence $V_{\lambda+L}[\hat{g}^i]\circ h_s$ is an irreducible $\hat{g}^r$-twisted $V_{L}$-module.
Thus $V_{\lambda+L}[\hat{g}^i](q)\circ h_s\cong V_{\lambda'+L}[\hat{g}^r](q')$ as $V_{L}^{\hat{g}}$-modules for some $\lambda'\in L^*$ and $0\le q'\le n-1$.
Note that $r$ is strictly less than $i$. By repeating this process, we obtain the result.
\end{proof}

\section{Coinvarinat lattices of isometries of the Leech lattice}\label{S:coinv}
In this section, we describe five coinvariant lattices of isometries of the Leech lattice and summarize their properties.
All calculations on lattices in this section have been done by MAGMA (\cite{MAGMA}).

Let $\Lambda$ be the Leech lattice and let $g\in O(\Lambda)$ whose conjugacy class is $4C$, $6E$, $6G$, $8E$ or $10F$ (in \cite{ATLAS} notation).
Let $\Lambda_g$ be the coinvariant lattice of $g$ as in \eqref{Eq:coinv}.
Then $g$ acts fixed-point freely on $\Lambda_g$.
The following can be verified by MAGMA.

\begin{lemma}\label{Lem:conjclass0} 
Let $g\in O(\Lambda)$ whose conjugacy class is $4C$, $6E$, $6G$, $8E$ or $10F$.
\begin{enumerate}[{\rm (1)}]
\item $(1-g)\Lambda_g^*=\Lambda_g$.

\item The quotient group $C_{O(\Lambda_g)}(g)/\langle g\rangle$ acts faithfully on $\mathcal{D}(\Lambda_g)$. 
\end{enumerate}
\end{lemma}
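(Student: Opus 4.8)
The statement is purely about the lattice $\Lambda_g$ and its isometry $g$, so the plan is to verify both assertions directly by explicit computation in MAGMA, using the concrete realization of $\Lambda_g$ as the coinvariant lattice of $g$ inside the Leech lattice $\Lambda$. First I would fix, for each of the five conjugacy classes $4C,6E,6G,8E,10F$, a representative isometry $g\in O(\Lambda)$ (for instance from a standard construction of $O(\Lambda)\cong 2.\mathrm{Co}_1$), compute the fixed-point sublattice $\Lambda^g$ as the kernel of $1-g$ on $\Lambda$, and then obtain $\Lambda_g=\{\alpha\in\Lambda\mid(\alpha\mid\Lambda^g)=0\}$ as the orthogonal complement. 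One then has an explicit Gram matrix for $\Lambda_g$ and an explicit matrix for the restriction of $g$; since $g$ is fixed-point free on $\Lambda_g$, the operator $1-g$ is invertible over $\Q$, so its inverse and the dual lattice $\Lambda_g^*$ are computable.

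For part (1), the plan is to compute $(1-g)\Lambda_g^*$ — a finite-index sublattice of $\Q\otimes_\Z\Lambda_g$ — and check directly that it equals $\Lambda_g$. Equivalently, and more robustly numerically, one checks the two inclusions separately: $(1-g)\Lambda_g^*\subseteq\Lambda_g$ (which is exactly the hypothesis \eqref{Eq:Assump}, i.e.\ $g=\id$ on $\mathcal D(\Lambda_g)$, already known to hold for these classes since $\Lambda_g^*/\Lambda_g$ is fixed by $g$) together with the reverse inclusion; by Lemma~\ref{Lem:1-g+}(2), $((1-g)\Lambda_g^*)^*=(1-g)^{-1}\Lambda_g$, so comparing discriminants, $|\mathcal D((1-g)\Lambda_g^*)|=|\det(1-g)|^2\,|\mathcal D(\Lambda_g^*)|$; since $|\det(1-g)|^2=|\mathcal D((1-g)\Lambda_g)|/|\mathcal D(\Lambda_g)|$ ... actually the cleanest route is simply: MAGMA computes the sublattice $M=(1-g)\Lambda_g^*$ together with its Gram matrix, and one verifies $M=\Lambda_g$ by checking $M\subseteq\Lambda_g$ and $|\mathcal D(M)|=|\mathcal D(\Lambda_g)|$. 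This is a routine finite computation for each of the five lattices.

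For part (2), the plan is to compute the centralizer $C_{O(\Lambda_g)}(g)$ inside the (finite, computable) automorphism group $O(\Lambda_g)$, form the quotient by the cyclic subgroup $\langle g\rangle$ (noting $g$ itself is central in its centralizer so this is a genuine group quotient), and compute the induced action on the finite abelian group $\mathcal D(\Lambda_g)=\Lambda_g^*/\Lambda_g$. One then checks that the kernel of the map $C_{O(\Lambda_g)}(g)\to O(\mathcal D(\Lambda_g))$ is exactly $\langle g\rangle$. Since $g=\id$ on $\mathcal D(\Lambda_g)$, we have $\langle g\rangle$ contained in that kernel, so the only thing to verify is that the kernel is no larger; this is again a direct computation of a kernel of a homomorphism between explicitly given finite groups. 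The main obstacle, such as it is, is not mathematical depth but rather the bookkeeping: one must make sure the chosen matrix representatives of the five conjugacy classes are genuinely in the stated classes (this can be confirmed by computing the characteristic polynomial / frame shape and the order, and matching against the ATLAS data for $\mathrm{Co}_0$), and one must ensure MAGMA's $O(\Lambda_g)$ is the full isometry group and not merely a subgroup. Once the representatives are pinned down, both (1) and (2) reduce to finite lattice and finite group computations of the kind already carried out throughout Section~\ref{S:coinv}.
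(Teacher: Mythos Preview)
Your proposal is correct and takes essentially the same approach as the paper: the paper's entire proof of this lemma is the single sentence ``The following can be verified by MAGMA,'' and your plan is a detailed description of exactly that computation. (One minor remark: the lemma you cite as \texttt{Lem:1-g+} is in a commented-out block in the paper, so the reference would be undefined---but since you abandon that route in favor of the direct discriminant comparison in MAGMA, this does not affect the argument.)
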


The following can be found in \cite{HL90}.

\begin{lemma}\label{Lem:conjLeech}
For $g$ in the conjugacy class $4C$, $6E$, $6G$, $8E$ or $10F$ of $O(\Lambda)$, its conjugacy class is uniquely determined by $|g|$, $\rank \Lambda^g$ and $\mathcal{D}(\Lambda^g)(\cong\mathcal{D}(\Lambda_g))$.  
\end{lemma}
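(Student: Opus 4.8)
First note that the identification $\mathcal{D}(\Lambda^g)\cong\mathcal{D}(\Lambda_g)$ costs nothing: $\Lambda$ is unimodular and $\Lambda^g,\Lambda_g$ are mutually orthogonal primitive sublattices of $\Lambda$, so $\Lambda/(\Lambda^g\oplus\Lambda_g)$ is the graph of an anti-isometry $\mathcal{D}(\Lambda^g)\to\mathcal{D}(\Lambda_g)$; in particular $q_{\Lambda^g}=-q_{\Lambda_g}$ and the underlying groups are isomorphic. Hence the claim is equivalent to the following: for $g$ in one of the five classes, no other conjugacy class of $O(\Lambda)$ shares the triple $(|g|,\rank\Lambda_g,\mathcal{D}(\Lambda_g))$ with it.

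The plan is to run through the list of conjugacy classes of $O(\Lambda)\cong 2.\mathrm{Co}_1$. Each class carries a known frame shape $\prod_k k^{a_k}$, i.e.\ the characteristic polynomial $\prod_k (t^k-1)^{a_k}$ of $g$ on $\Lambda\otimes_\Z\R$; since $g$ is semisimple, $\rank\Lambda^g$ equals the multiplicity of the eigenvalue $1$, which is $\sum_k a_k$, and $\rank\Lambda_g=24-\rank\Lambda^g$. Fixing the order $n\in\{4,6,8,10\}$, and then imposing the correct value of $\rank\Lambda_g$ (the one attached to $4C$, $6E$, $6G$, $8E$, $10F$ respectively), leaves only a short list of competing classes. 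For each competitor one computes $\mathcal{D}(\Lambda_g)$ explicitly from a matrix representative of the class---most cheaply with MAGMA---and checks that its isomorphism type differs from that of the target class. Doing this for all five classes yields the lemma; in fact the relevant frame shapes and discriminant groups are already tabulated in \cite{HL90}, so no fresh computation is strictly necessary.

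I expect the only real work here to be organisational rather than conceptual: the list of conjugacy classes of $2.\mathrm{Co}_1$ is long, and one must be sure that none of the (finitely many) coincidences in order together with fixed-lattice rank extends to a coincidence of discriminant groups. For the five classes in the statement this reduces to a short finite verification, which is precisely what \cite{HL90} records.
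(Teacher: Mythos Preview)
Your proposal is correct and matches the paper's approach: the paper gives no independent argument but simply states that the lemma ``can be found in \cite{HL90}'', and your write-up likewise reduces the claim to the tabulation of frame shapes and discriminant groups in \cite{HL90}. The additional explanation you supply (the anti-isometry $\mathcal{D}(\Lambda^g)\cong\mathcal{D}(\Lambda_g)$, reading off $\rank\Lambda^g$ from the frame shape, and the finite check among classes of the same order and rank) is accurate and more informative than the paper's bare citation, but the substance is the same.
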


We now give an alternative construction of $\Lambda_g$.
Let $R$ be the root lattice and let $c\in R^*/R$ be the vector as in the corresponding row of Table \ref{glue}.
Let $C$ be the subgroup of $R^*/R$ generated by $c$.
Let $L_B(C)$ be the lattice given in \eqref{Eq:LBC}.
Then $L_B(C)$ is even.
In addition, one can verify that $\Lambda_g\cong L_B(C)$ by MAGMA.
Then $g$ induces an isometry of $L_B(C)$; we use the same symbol $g$.
On the other hand, $O(L_B(C))$ contains $g_{\Delta,c}$ defined as in Section \ref{S:extra}.
Then Assumptions (i) and (ii) in Section 4 hold and hence $g_{\Delta,c}$ is a fixed-point free isometry of order $n$ and $(1-g_{\Delta,c})L_B(C)^*\subset L_B(C)$.
Thus $g_{\Delta,c}$ lifts to an isometry of the Leech lattice $\Lambda$ which acts trivially 
on $\Lambda^g$.   
Here we view $\Lambda$ as an overlattice of $\Lambda^g\oplus L_B(C)$.  
Clearly, its fixed-point sublattice is also $\Lambda^g$ and by Lemma \ref{Lem:conjLeech}, $g$ is conjugate to $g_{\Delta,c}$ in $O(\Lambda)$.
Thus, we may assume that $g=g_{\Delta,c}$ under 
the identification  
$\Lambda_g\cong L_B(C)$.

\begin{proposition}\label{P:Nc} For $g$ in the conjugacy class $4C,6E,6G,8E$ or $10F$ of $O(\Lambda)$, the coinvariant lattice $\Lambda_g$ can be constructed as $L_B(C)$.
\end{proposition}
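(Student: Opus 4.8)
The proof is essentially a matter of assembling the pieces already laid out in Section \ref{S:coinv}, so the plan is to verify that all hypotheses needed to invoke Theorem \ref{T:extra} are satisfied for each of the five conjugacy classes, and then to record the identification $\Lambda_g \cong L_B(C)$. First I would fix, for each class $g \in \{4C, 6E, 6G, 8E, 10F\}$, the root lattice $R = \bigoplus_i R_i$ (a direct sum of lattices of type $A_{k_i-1}$) and the glue vector $c \in R^*/R$ recorded in Table \ref{glue}, and set $C = \langle c\rangle$. One checks by MAGMA that the lattice $N = L_A(C) = \nu^{-1}(C)$ is even, so that $L_B(C)$ as defined in \eqref{Eq:LBC} makes sense, and that Assumptions (i) and (ii) of Section \ref{S:extra} hold, namely $\chi_\Delta \in (1/n)N^*$ and $\lambda_c \in L_B(C)$ (equivalently $(\lambda_c|\lambda_c) \in 2\Z$, by \cite[Lemma 4.8]{LS21}). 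These are finite checks on small lattices.

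Granting Assumptions (i) and (ii), the discussion preceding Theorem \ref{T:extra} already shows that $g_{\Delta,c} \in O(L_B(C))$ is a fixed-point free isometry of order $n$ with $(1-g_{\Delta,c})L_B(C)^* \subset L_B(C)$. The next step is the lattice-theoretic identification $\Lambda_g \cong L_B(C)$: view the Leech lattice $\Lambda$ as an even unimodular overlattice of $\Lambda^g \oplus L_B(C)$ (this is where one uses that $R$ and $c$ were chosen so that the corresponding overlattice construction yields $\Lambda$, a point one confirms by MAGMA as asserted in the text just before Proposition \ref{P:Nc}). Since $g_{\Delta,c}$ acts trivially on $\Lambda^g$ and as a fixed-point free isometry on $L_B(C)$, it extends to an isometry $\tilde g$ of $\Lambda$ with fixed-point sublattice exactly $\Lambda^{\tilde g} = \Lambda^g$, hence with coinvariant lattice $\Lambda_{\tilde g} = L_B(C)$. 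By construction $|\tilde g| = |g| = n$, and $\mathrm{rank}\, \Lambda^{\tilde g} = \mathrm{rank}\, \Lambda^g$; moreover $\mathcal{D}(\Lambda^{\tilde g}) \cong \mathcal{D}(\Lambda_{\tilde g}) = \mathcal{D}(L_B(C))$, which one computes and compares with $\mathcal{D}(\Lambda^g) \cong \mathcal{D}(\Lambda_g)$. Lemma \ref{Lem:conjLeech} then forces $\tilde g$ to be conjugate to $g$ in $O(\Lambda)$, and consequently $\Lambda_g \cong \Lambda_{\tilde g} = L_B(C)$.

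Finally, under this isometry we transport $g$ to an isometry of $L_B(C)$; since conjugate isometries of $\Lambda$ have conjugate restrictions to their coinvariant lattices, and $g_{\Delta,c}$ is one such restriction, we may simply take $g = g_{\Delta,c}$ on $\Lambda_g = L_B(C)$. This is exactly the normalization recorded in the paragraph preceding the proposition, and it is what makes the extra automorphism $\zeta$ of Theorem \ref{T:extra} available in the later sections.

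The step I expect to be the genuine (if not deep) obstacle is pinning down the correct data $(R, c)$ in Table \ref{glue} so that the resulting overlattice is honestly the Leech lattice and not merely some even unimodular lattice of rank $24$ with the same invariants for $\Lambda^g$ — in other words, producing the table and verifying, class by class, that $L_A(C)$ is even, that Assumptions (i)--(ii) hold, and that the $\Lambda^g \oplus L_B(C)$ gluing reproduces $\Lambda$. All of this is a finite MAGMA computation on low-rank lattices, so there is no conceptual difficulty, but it is where essentially all the content of the proof resides; once it is in place, Lemma \ref{Lem:conjLeech} and Theorem \ref{T:extra} do the rest mechanically.
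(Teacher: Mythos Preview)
Your proposal is correct and follows essentially the same strategy as the paper: build $L_B(C)$ from the data in Table~\ref{glue}, verify Assumptions (i)--(ii), lift $g_{\Delta,c}$ to an isometry of $\Lambda$ acting trivially on $\Lambda^g$, and invoke Lemma~\ref{Lem:conjLeech} to force conjugacy with $g$. The only difference is the order in which the MAGMA check enters. The paper verifies the isomorphism $\Lambda_g\cong L_B(C)$ \emph{directly} by MAGMA first (this is literally the sentence ``one can verify that $\Lambda_g\cong L_B(C)$ by MAGMA'' in the paragraph preceding the proposition), after which viewing $\Lambda$ as an overlattice of $\Lambda^g\oplus L_B(C)$ is automatic; the conjugacy $g\sim g_{\Delta,c}$ then comes as a corollary via Lemma~\ref{Lem:conjLeech}. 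You instead verify the gluing $\Lambda^g\oplus L_B(C)\hookrightarrow\Lambda$ by MAGMA and then \emph{deduce} $\Lambda_g\cong L_B(C)$ from the conjugacy. Both routes are valid and rest on the same two ingredients (a finite lattice computation plus Lemma~\ref{Lem:conjLeech}); the paper's ordering is marginally more direct for the bare statement of Proposition~\ref{P:Nc}, while yours packages the isomorphism and the identification $g=g_{\Delta,c}$ into a single argument.
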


We summarize some properties of $\Lambda_g$ in Table \ref{glue}; the structures of $O(\Lambda_g)$ and $C_{O(\Lambda_g)}(g)$ are computed by MAGMA.
Here for $\Lambda_g^*/\Lambda_g$, the symbol $\prod{a_i}^{b_i}$ means the abelian group structure $\bigoplus(\Z/a_i\Z)^{b_i}$.
For the notations of groups, see \cite{ATLAS}
\begin{small}
\begin{longtable}{|c|c|c|c|c|c|}
\caption{Coinvariant lattices $\Lambda_g$}\label{glue}
\\ \hline 
Class&
$R$ & $c$&$\Lambda_g^*/\Lambda_g$&$O(\Lambda_g)$&$C_{O(\Lambda_g)}(g)$ \\ \hline
$4C$&
 $A_3^4A_1^2$ & $(1 1 1 1|11)$& $2^24^4$&
$2^{10+3}.\Sym_6$&$2^{9+3}.\Sym_6$\\
$6E$&
 $A_5^2 A_2^2 A_1^2$& $(11|11|11)$&
$2^43^4$&$6.(\GO^+_4(2)\times \GO^+_4(3)).2$&$6.(\GO^+_4(2)\times \GO^+_4(3))$\\
$ 6G$&
$A_5^3 A_1^3$& $(111|1 1 1) $&$2^63^3$&
$6.(\GO_3(3)\times\PSO^+_4(3)).2$&$6.(\GO_3(3)\times\PSO^+_4(3))$\\
$8E$&
$A_7^2A_3A_1$ & $(13|1|1)$&$2.4.8^2$&$2^6.(\Dih_8\times \Sym_4)$&$2^5.(4\times\Sym_4)$\\
$10F$&
 $A_9^2A_1^2$& $(13|11)$&$2^45^2$ &$(2\times\AGL_1(5)).\Dih_8^2$&$10.\Dih_8^2$\\
\hline 
\end{longtable}
\end{small}

Next, we discuss the quadratic space $(\mathcal{D}(\Lambda_g),q)$ and the orthogonal group $O(\mathcal{D}(\Lambda_g))$ (see Section \ref{S:lattice}).
For $g\in O(\Lambda)$ and $k\in \Z_{>0}$,  set
\begin{equation}
\mathcal{L}_{g,k}=\{ \lambda+\Lambda_g\in \mathcal{D}(\Lambda_g)\mid q(\lambda+\Lambda_g)=0,\ o(\lambda+\Lambda_g)=k\}, \label{Def:Lg2}
\end{equation}
where $o(\lambda+\Lambda_g)$ is the order of $\lambda+\Lambda_g$ in $\mathcal{D}(\Lambda_g)$.
By using MAGMA, we can also verify the following two lemmas.

\begin{lemma}\label{Lem:conjclass1} Let $g$ be an isometry of $\Lambda$ whose conjugacy class is $4C$, $6E$, or $8E$.
Let $k$ be a divisor of $|g|$.
Then $C_{O(\Lambda_g)}(g)/\langle g\rangle$ acts transitively on $\mathcal{L}_{g,k}$. 
\end{lemma}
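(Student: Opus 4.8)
The statement is a finite verification for each of the three classes, and the plan is to carry it out on an explicit model of $\Lambda_g$. First I would invoke Proposition \ref{P:Nc} to replace $\Lambda_g$ by $L_B(C)$ for the root lattice $R$ and glue vector $c$ listed in Table \ref{glue}; this produces explicit generator matrices for $\Lambda_g$ and $\Lambda_g^*$ inside $R^*$, together with the vector $\chi_\Delta$ and the fixed-point free isometry $g=g_{\Delta,c}$ of order $n=|g|$. From these one reads off the finite quadratic space $(\mathcal{D}(\Lambda_g),q)$ with $q(\lambda+\Lambda_g)=(\lambda|\lambda)/2+\Z$ as in \eqref{Eq:qL}; its underlying abelian group is the one recorded in the column $\Lambda_g^*/\Lambda_g$ of Table \ref{glue}, of order $1024$, $1296$ and $512$ respectively.

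Next I would compute the centralizer $C_{O(\Lambda_g)}(g)=\{h\in O(\Lambda_g)\mid hg=gh\}$, either directly from the lattice or from the shape given in Table \ref{glue}, together with the natural homomorphism $\pi\colon C_{O(\Lambda_g)}(g)\to O(\mathcal{D}(\Lambda_g))$. Since $(1-g)\Lambda_g^*=\Lambda_g$ by Lemma \ref{Lem:conjclass0}(1), the isometry $g$ fixes every coset in $\mathcal{D}(\Lambda_g)$, so $\langle g\rangle\subseteq\ker\pi$; by Lemma \ref{Lem:conjclass0}(2) this inclusion is an equality, so the image $\overline{G}:=\pi(C_{O(\Lambda_g)}(g))$ is isomorphic to $C_{O(\Lambda_g)}(g)/\langle g\rangle$. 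Thus it suffices to show that $\overline{G}$ acts transitively on $\mathcal{L}_{g,k}$ for every divisor $k$ of $n$.

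Finally, for each such $k$ I would enumerate $\mathcal{L}_{g,k}=\{x\in\mathcal{D}(\Lambda_g)\mid q(x)=0,\ o(x)=k\}$, the set of isotropic elements of order exactly $k$, and compute its $\overline{G}$-orbits, checking that there is exactly one. The case $k=1$ is trivial since $\mathcal{L}_{g,1}=\{0\}$, and for $n=6$ one can first split $\mathcal{D}(\Lambda_g)$ into its $2$-part and $3$-part, on each of which $\overline{G}$ acts, and then handle $k=2$, $k=3$ and $k=6$ separately; this keeps every set being searched small, so all of these steps lie well within the range of a routine MAGMA computation.

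There is no conceptual obstacle here: $\overline{G}$ is in general a \emph{proper} subgroup of the full orthogonal group $O(\mathcal{D}(\Lambda_g))$, so one cannot simply appeal to Witt's theorem for finite quadratic forms, and the transitivity genuinely reflects the size of the centralizer; but everything in sight is finite and small. The only real work, were one to want a proof checkable by hand, would be to exhibit an explicit representative of $\mathcal{L}_{g,k}$ for each $k$ and an explicit element of $C_{O(\Lambda_g)}(g)$ carrying it to each other element — precisely the bookkeeping that MAGMA performs, and also why the statement is restricted to $4C$, $6E$ and $8E$, the classes for which this plain transitivity happens to hold.
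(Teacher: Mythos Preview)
Your proposal is correct and matches the paper's approach exactly: the paper states only that ``By using MAGMA, we can also verify the following two lemmas,'' and what you have written is a careful description of precisely that computation, invoking the explicit model $L_B(C)$ from Proposition~\ref{P:Nc} and the faithfulness in Lemma~\ref{Lem:conjclass0}(2) to reduce to an orbit count on the finite discriminant form. Your observation that $\overline{G}$ can be a proper subgroup of $O(\mathcal{D}(\Lambda_g))$ (index~$2$ for $4C$ and $8E$, though index~$1$ for $6E$ by Table~\ref{quad}) correctly explains why a bare Witt-type argument does not suffice and the machine check is genuinely needed.
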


\begin{lemma}\label{Lem:conjclass6G10F}
Let $g$ be an isometry of $\Lambda$ whose conjugacy class is $6G$ or $10F$.
Then, the group $C_{O(\Lambda_g)}(g)/\langle g\rangle$ acts transitively on $\mathcal{L}_{g, \, |g|/2}$.
\end{lemma}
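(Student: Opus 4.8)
\textbf{Proof plan for Lemmas \ref{Lem:conjclass1} and \ref{Lem:conjclass6G10F}.}
The plan is to reduce both statements to a finite computation that MAGMA can carry out, after setting up the data in the explicit model $\Lambda_g \cong L_B(C)$ provided by Proposition \ref{P:Nc} and Table \ref{glue}. First I would fix, for each conjugacy class among $4C, 6E, 6G, 8E, 10F$, the root lattice $R$, the glue vector $c$, and hence the lattices $N = L_A(C)$ and $L = L_B(C) \cong \Lambda_g$, together with the Coxeter-type isometry $g = g_{\Delta,c}$ as in Section \ref{S:extra}. This gives an explicit Gram matrix for $\Lambda_g$ and an explicit integer matrix for $g$ acting fixed-point freely on it. From these one computes the discriminant group $\mathcal{D}(\Lambda_g) = \Lambda_g^*/\Lambda_g$ (whose abelian group structure is recorded in Table \ref{glue}), the quadratic form $q$ of \eqref{Eq:qL}, and the centralizer $C_{O(\Lambda_g)}(g)$ (also recorded in Table \ref{glue}).

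Next I would form, for each relevant divisor $k$ of $|g|$ (all divisors for $4C,6E,8E$; only $k = |g|/2$ for $6G,10F$), the set $\mathcal{L}_{g,k}$ of elements of $\mathcal{D}(\Lambda_g)$ that are isotropic for $q$ and of order exactly $k$, as in \eqref{Def:Lg2}. Since $C_{O(\Lambda_g)}(g)$ acts on $\mathcal{D}(\Lambda_g)$ preserving $q$ and preserving the order of elements, it permutes $\mathcal{L}_{g,k}$; and $\langle g\rangle$ acts trivially on $\mathcal{D}(\Lambda_g)$ because $(1-g)\Lambda_g^* = \Lambda_g$ by Lemma \ref{Lem:conjclass0}(1) (this is the hypothesis \eqref{Eq:Assump}), so the action factors through $C_{O(\Lambda_g)}(g)/\langle g\rangle$. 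It then suffices to check, with MAGMA, that this permutation action has a single orbit on $\mathcal{L}_{g,k}$; equivalently, that $|\mathcal{L}_{g,k}|$ divides $|C_{O(\Lambda_g)}(g)/\langle g\rangle|$ and a single orbit computation exhausts the set. This is a routine but genuinely finite verification, entirely parallel to the transitivity computations in \cite{Sh04,Lam18b,LamCFT} for the prime-order cases.

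The main obstacle — really the only non-routine point — is that transitivity on the \emph{whole} of $\mathcal{L}_{g,k}$ genuinely holds only for the $k$ listed: for $6G$ and $10F$ the group $C_{O(\Lambda_g)}(g)/\langle g\rangle$ does \emph{not} act transitively on $\mathcal{L}_{g,k}$ for every divisor $k$ of $|g|$ (which is why Lemma \ref{Lem:conjclass6G10F} is stated only for $k = |g|/2$), so one must be careful to run the orbit computation exactly for the cases claimed and not overstate it. A secondary point to be careful about is consistency of the identification $\Lambda_g \cong L_B(C)$ with the labelling of $g$ as $g_{\Delta,c}$: by Lemma \ref{Lem:conjLeech} the conjugacy class of $g$ in $O(\Lambda)$ is pinned down by $|g|$, $\rank\Lambda^g$ and $\mathcal{D}(\Lambda^g)\cong\mathcal{D}(\Lambda_g)$, so once the MAGMA model reproduces these invariants the identification is legitimate and the orbit computation on $\mathcal{D}(\Lambda_g)$ is independent of the choice of representative $g$ within its class. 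With the model fixed, each lemma follows by a direct MAGMA orbit computation, as asserted.
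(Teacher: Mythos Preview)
Your proposal is correct and takes essentially the same approach as the paper: the paper simply states that Lemmas \ref{Lem:conjclass1} and \ref{Lem:conjclass6G10F} are verified by MAGMA, and your plan spells out exactly how to set up that computation in the explicit model $\Lambda_g\cong L_B(C)$ of Proposition \ref{P:Nc}. Your cautionary remarks about restricting to $k=|g|/2$ in the $6G$ and $10F$ cases and about the consistency of the identification via Lemma \ref{Lem:conjLeech} are apt and do not diverge from the paper's implicit reasoning.
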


We summarize some properties of $O(\mathcal{D}(\Lambda_g))$ in Table \ref{quad}.

\begin{longtable}{|c|c|c|c|c|}
\caption{Orthogonal groups $O(\mathcal{D}(\Lambda_g))$}\label{quad}
\\ \hline 
Class& $\mathcal{D}(\Lambda_g)$&$O(\mathcal{D}(\Lambda_g))$
&$C_{O(\Lambda_g)}(g)/\langle g\rangle$
& $|O(\mathcal{D}(\Lambda_g)):C_{O(\Lambda_g)}(g)/\langle g\rangle|$ \\ \hline
$4C$& $2^24^4$&$2^{11}.\Sym_6$
&$2^{10}.\Sym_6$&$2$ \\
$6E$&$2^43^4$&$\GO_4^+(2)\times \GO_4^+(3)$
&$\GO_4^+(2)\times \GO_4^+(3)$ &$1$\\ 
$ 6G$&$2^63^3$&$\GO_3(3)\times \PSO^+_4(3).2^2$ & $\GO_3(3)\times\PSO^+_4(3)$&$4$\\
$8E$& $2.4.8^2$&$2^6.\Dih_{12}$&$2^6.\Dih_6$&$2$ \\
$10F$&$2^45^2$ &$\Dih_8\times \Sym_4$&$\Dih_8^2$&3\\
\hline 
\end{longtable}

\section{Automorphism group of the cyclic orbifold VOA $V_{\Lambda_g}^{\hat{g}}$}\label{S:6}
Let $g$ be an isometry of the Leech lattice $\Lambda$ whose conjugacy class is $4C,6E,6G,8E$ or $10F$ and let $\Lambda_g$ be the coinvariant lattice of $g$.
Let $n$ be the order of $g$.
Then $g$ acts on $\Lambda_g$ as a fixed-point free isometry of order $n$.
Let $\hat{g}\in\Aut(V_{\Lambda_g})$ be a standard lift of $g$.
Note that the order of $\hat{g}$ is also $n$.
In this section, we determine the automorphism group of the cyclic orbifold VOA $V_{\Lambda_g}^{\hat{g}}$.

It follows from \eqref{Eq:hom2} and Lemma \ref{Lem:conjclass0} (1) that 
$$\hom(\Lambda_g/(1-g)\Lambda_g, \Z/n\Z)\cong ((1-g)^{-1}\Lambda_g^*)/\Lambda_g^*\cong \Lambda_g^*/(1-g)\Lambda_g^*=\Lambda_g^*/\Lambda_g.$$
Hence by Theorem \ref{normalizer},
\begin{equation}
C_{\aut(V_{\Lambda_g})}(\hat{g})/ \langle \hat{g}\rangle \cong 
(\Lambda_g^*/\Lambda_g).(C_{O(\Lambda_g)}(g)/\langle g\rangle) \label{Eq:exactC}
\end{equation} is a subgroup of $\aut(V_{\Lambda_g}^{\hat{g}})$. 

Let $\mathrm{Irr}(V_{\Lambda_g}^{\hat{g}})$ be the set of (isomorphism classes of) irreducible $V_{\Lambda_g}^{\hat{g}}$-modules.
Then $\aut(V_{\Lambda_g}^{\hat{g}})$ acts on $\mathrm{Irr}(V_{\Lambda_g}^{\hat{g}})$ by conjugation as in \eqref{Eq:conjact}.
Recall from Theorem \ref{Thm:EMS} that the map $q$ in \eqref{Eq:qVOA} gives a quadratic space structure on $\irr(V_{\Lambda_g}^{\hat{g}})$.
By Lemma \ref{Lem:conj} (2), there exists a natural group homomorphism $$\mu: \aut(V_{\Lambda_g}^{\hat{g}}) \to O(\mathrm{Irr}(V_{\Lambda_g}^{\hat{g}})),$$ where $O(\mathrm{Irr}(V_{\Lambda_g}^{\hat{g}}))$ is the orthogonal group of the quadratic space $(\mathrm{Irr}(V_{\Lambda_g}^{\hat{g}}), q)$.

\begin{proposition}\label{conj_mu}
The group homomorphism $\mu$ is injective.
\end{proposition}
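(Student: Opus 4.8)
The plan is to show that the kernel of $\mu$ is trivial. Suppose $\tau\in\Aut(V_{\Lambda_g}^{\hat g})$ acts trivially on $\irr(V_{\Lambda_g}^{\hat g})$; in particular $\tau$ fixes every isomorphism class $V_{\Lambda_g}(i)$, $0\le i\le n-1$. By Proposition \ref{P:stabVL1} (1), the stabilizer of $V_{\Lambda_g}(1)$ in $\Aut(V_{\Lambda_g}^{\hat g})$ is $C_{\Aut(V_{\Lambda_g})}(\hat g)/\langle\hat g\rangle$, so $\tau$ is represented by some $\hat h\in C_{\Aut(V_{\Lambda_g})}(\hat g)$ modulo $\langle\hat g\rangle$. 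Thus it suffices to show that any $\hat h\in C_{\Aut(V_{\Lambda_g})}(\hat g)$ whose image in $\Aut(V_{\Lambda_g}^{\hat g})$ fixes \emph{all} of $\irr(V_{\Lambda_g}^{\hat g})$ already lies in $\langle\hat g\rangle$.

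Next I would use the exact description \eqref{Eq:exactC}, namely $C_{\Aut(V_{\Lambda_g})}(\hat g)/\langle\hat g\rangle\cong(\Lambda_g^*/\Lambda_g).(C_{O(\Lambda_g)}(g)/\langle g\rangle)$, to split the argument along this extension. Write $\bar h\in C_{O(\Lambda_g)}(g)$ for the image of $\hat h$ in $O(\Lambda_g)$. Since $\hat h$ acts trivially on the classes $V_{\lambda+\Lambda_g}(j)$ for all $\lambda+\Lambda_g\in\mathcal D(\Lambda_g)$, Lemma \ref{Lem:conjlift} forces $\bar h$ to fix every $\lambda+\Lambda_g$, i.e. $\bar h$ acts trivially on $\mathcal D(\Lambda_g)$; by Lemma \ref{Lem:conjclass0} (2) the action of $C_{O(\Lambda_g)}(g)/\langle g\rangle$ on $\mathcal D(\Lambda_g)$ is faithful, hence $\bar h\in\langle g\rangle$. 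Replacing $\hat h$ by $\hat h\hat g^{-k}$ for the appropriate $k$ (which does not change its image in $\Aut(V_{\Lambda_g}^{\hat g})$ up to the trivial-action hypothesis), we may assume $\bar h=\id$, so that $\hat h$ lies in the kernel of $-$, i.e. $\hat h$ corresponds to an element of $\hom(\Lambda_g/(1-g)\Lambda_g,\Z/n\Z)\cong\{\sigma_y\mid y\in((1-g)^{-1}\Lambda_g^*)/\Lambda_g^*\}$ via \eqref{Eq:hom}–\eqref{Eq:hom2}.

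Finally, for such a $\sigma_y$ with $y=(1-g)^{-1}\alpha$, $\alpha\in\Lambda_g^*$, I would invoke Lemma \ref{Re:conj}: since $(1-g)\Lambda_g^*=\Lambda_g$ by Lemma \ref{Lem:conjclass0} (1), the hypothesis $(1-g)L^*=L$ of that lemma holds with $L=\Lambda_g$, and it states precisely that $\hom(\Lambda_g/(1-g)\Lambda_g,\Z/n\Z)$ acts faithfully on $\{V_{\lambda+\Lambda_g}(i)\mid\lambda+\Lambda_g\in\mathcal D(\Lambda_g),\ 0\le i\le n-1\}$. As $\tau=\sigma_y$ fixes all these classes, faithfulness gives $\sigma_y=\id$ on $V_{\Lambda_g}^{\hat g}$, hence $\hat h\in\langle\hat g\rangle$ and $\tau=\id$. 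Therefore $\mu$ is injective.

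The only mild subtlety, and the step I would be most careful about, is the bookkeeping in the middle paragraph: one must check that the trivial-action hypothesis on $\tau$ descends correctly through both layers of the extension \eqref{Eq:exactC} — first ruling out a nontrivial $\bar h\in C_{O(\Lambda_g)}(g)/\langle g\rangle$ using Lemmas \ref{Lem:conjlift} and \ref{Lem:conjclass0} (2), and then, \emph{after} normalizing $\bar h$ to the identity, handling the remaining $\sigma_y$ by Lemma \ref{Re:conj}. Everything else is a direct citation of results already established in the excerpt; there is no genuinely hard analytic or combinatorial obstacle, only the need to organize the reduction cleanly.
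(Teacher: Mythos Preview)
Your proof is correct and follows essentially the same approach as the paper's: both arguments first invoke Proposition~\ref{P:stabVL1} to land $\tau$ in $C_{\Aut(V_{\Lambda_g})}(\hat g)/\langle\hat g\rangle$, then use Lemma~\ref{Lem:conjlift} together with Lemma~\ref{Lem:conjclass0}~(2) to kill the $C_{O(\Lambda_g)}(g)/\langle g\rangle$ layer, and finally apply Lemma~\ref{Re:conj} with Lemma~\ref{Lem:conjclass0}~(1) to kill the $\hom$ layer. Your write-up is slightly more explicit about the normalization step $\hat h\mapsto\hat h\hat g^{-k}$, but the underlying argument is identical.
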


\begin{proof}
 Let $\tau\in \ker \mu$. Then $\tau$ fixes $\{V_{\Lambda_g}(j) \mid j=0, 1, \dots, 
|g|-1\}$ pointwisely.
By Proposition \ref{P:stabVL1}, $\tau\in C_{\aut(V_{\Lambda_g})}(\hat{g})/\langle \hat{g} \rangle$.
By Theorem \ref{normalizer}, $\bar{\tau}\in C_{O(\Lambda)}(g)/\langle g\rangle$. 
By Lemmas \ref{Lem:conjlift} and \ref{Lem:conjclass0}, we have $\bar{\tau}=1$.
By Theorem \ref{normalizer} again, $\tau\in \hom(\Lambda_g/(1-g)\Lambda_g,\, \Z/n\Z)$.
By Lemmas \ref{Re:conj} and \ref{Lem:conjclass0} (1), $\tau$ must be the identity map.
\end{proof}

\begin{remark} By the proposition above, we can view $\aut(V_{\Lambda_g}^{\hat{g}})$ as a subgroup of $O(\mathrm{Irr}(V_{\Lambda_g}^{\hat{g}}))$.
\end{remark}

Next we will prove that $\Aut(V_{\Lambda_g}^{\hat{g}})$ acts transitively on
a certain subset $\mathcal{S}_g$  of $\irr(V_{\Lambda_g}^{\hat{g}})$ (cf. \eqref{Eq:Sg} and \eqref{Eq:Sg2} below). 
In Sections \ref{S:non-db} and \ref{S:db} below, we will discuss the case where $|\hat{g}|=|g|$ and $|\hat{g}|=2|g|$ on $V_\Lambda$, respectively.

\subsection{Transitivity for the cases $4C,6E$ and $8E$ }\label{S:non-db}
In this subsection, we assume that $g\in O(\Lambda)$ belongs to the conjugacy classes $4C,6E$ or $8E$.
Then $|\hat{g}|=|g|$ on both $V_\Lambda$ and $V_{\Lambda_g}$; we set $n=|g|$.

We now review the quadratic space structure of $(\irr(V_{\Lambda_g}^{\hat{g}}),q)$ from \cite[Sections 6.1, 6.2 and 6.3]{La19} (see also \cite[Section 5.2.1]{La19} for more general cases).
We choose a labelling of the irreducible $V_{\Lambda_g}^{\hat{g}}$-module $V_{\lambda+\Lambda_g}[\hat{g}^i](j)$ for $\lambda+\Lambda_g\in\mathcal{D}(\Lambda_g)$ and $0\le i,j\le n-1$ as in Section \ref{S:irred} (cf.\ \cite[Theorem 5.3]{La19}).
Then for $0\le i,j\le n-1$,
\begin{equation}
q(V_{\lambda+\Lambda_g}[\hat{g}^i](j))
\equiv \varepsilon(V_{\lambda+\Lambda_g}[\hat{g}^i](j))\equiv \frac{ij}{n}+\frac{(\lambda|\lambda)}{2}\mod \Z.\label{Eq:quad}
\end{equation}
Note that $\varepsilon(\irr(V_{\Lambda_g}^{\hat{g}}))\subset (1/n)\Z$.
In addition, by \cite[Theorem 5.3]{La19}, as groups, $\irr(V_{\Lambda_g}^{\hat{g}})\cong \mathcal{D}(\Lambda_g)\times(\Z/n\Z)^2$; the explicit multiplications are given as follows
\begin{equation}
V_{\lambda_1+\Lambda_g}[\hat{g}^{i_1}](j_1)\boxtimes 
V_{\lambda_2+\Lambda_g}[\hat{g}^{i_2}](j_2)=
V_{\lambda_1+\lambda_2+\Lambda_g}[\hat{g}^{i_1+i_2}](j_1+j_2)\label{Eq:fusion2}.
\end{equation}
Note that the two subgroups $$\{V_{\lambda+\Lambda_g}(j)\mid 0\le j\le |g|-1\}\cong\mathcal{D}(\Lambda_g)\quad \text{and}\quad \{V_{\Lambda_g}[\hat{g}^i](j)\mid 0\le i,j\le |g|-1\}\cong(\Z/n\Z)^2$$
are orthogonal with respect to the bilinear form.
Hence as quadratic spaces
\begin{equation}
(\irr(V_{\Lambda_g}^{\hat{g}}),q)\cong (\mathcal{D}(\Lambda_g),q)\oplus((\Z/n\Z)^2,q),\label{Eq:quado}
\end{equation}
where $q$ on $\mathcal{D}(\Lambda_g)$ is defined as in \eqref{Eq:qL} and $q((i,j))\equiv ij/n\pmod\Z$ on $(\Z/n\Z)^2$.
For $i\in\Z/n\Z$, we denote by $o(i)$ the order of $i$ in $\Z/n\Z$.

Now consider the subset 
\begin{equation}
\mathcal{S}_g= \{ M\in  \mathrm{Irr}(V_{\Lambda_g}^{\hat{g}})\mid q(M)=0,\ o(M)=n\},\label{Eq:Sg}
\end{equation} 
where $o(M)$ denotes the order of $M$ in the abelian group $\mathrm{Irr}(V_{\Lambda_g}^{\hat{g}})$.
Note that $V_{\Lambda_g}(1)\in\mathcal{S}_g$.
By the group structure and the conformal weights of irreducible $V_{\Lambda_g}^{\hat{g}}$-modules in \eqref{Eq:quad}, \eqref{Eq:fusion2} and \eqref{Eq:quado}, we obtain the following:

\begin{lemma}\label{L:untwstS} Let $\lambda+\Lambda_g\in \mathcal{D}(\Lambda_g)$ and $0\le j\le n-1$.
Then, the irreducible $V_{\Lambda_g}^{\hat{g}}$-module $V_{\lambda+\Lambda_g}(j)$ belongs to $\mathcal{S}_g$ if and only if both $\lcm(o(\lambda+\Lambda_g),o(j))=n$ and $q(\lambda+\Lambda_g)=0$ hold.
\end{lemma}

We now discuss the conjugates of elements in $\mathcal{S}_g$ under $\Aut(V_{\Lambda_g}^{\hat{g}})$.
By Proposition \ref{P:Nc}, $\Lambda_g\cong L_B(C)$ and let $N\cong L_A(C)$ be the overlattice of $\Lambda_g$ as in Section \ref{S:coinv}.
Then $N/\Lambda_g\cong\Z/n\Z$;
let $\gamma\in N$ such that $N/\Lambda_g=\langle\gamma+\Lambda_g\rangle$ and $(\chi_\Delta|\gamma)\in (1/n)+\Z$.
Then $o(\gamma+\Lambda_g)=n$ and $q(\gamma+\Lambda_g)=0$.

\begin{lemma}\label{L:gammaij} Any element in $\{V_{s\gamma+\Lambda_g}(j)\mid \lcm(o(s),o(j))=n\}$ is conjugate to an element in $\{V_{s\gamma+\Lambda_g}(j)\mid o(s)=n,\ 0\le j\le n-1\}$ by an element in $\Aut(V_{\Lambda_g}^{\hat{g}})$.
\end{lemma}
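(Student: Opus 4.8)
The plan is to use the extra automorphisms from Proposition~\ref{P:extra} to adjust the "twisted coordinate" $j$ while keeping $s\gamma$ of full order. Recall that in the present setup $L=L_B(C)=\Lambda_g$, and $\varphi=\sigma_{-\chi_\Delta}$ together with $\hat{g}$ both act on $V_N$; the modules $V_{s\gamma+\Lambda_g}(j)$ are the common eigenspaces of $\varphi$ and $\hat{g}$ on $V_N$, with $\varphi$-eigenvalue $\exp(2\pi\sqrt{-1}(-s)/n)$ and $\hat{g}$-eigenvalue $\exp(2\pi\sqrt{-1}j/n)$ (as in the proof of Proposition~\ref{P:extra}~(2)). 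So the pair $(s,j)\bmod n$ is exactly the pair of eigenvalue-exponents, and the automorphisms $h_k=\sigma_{v_k}\zeta\in\Aut(V_{\Lambda_g}^{\hat{g}})$ act on these pairs in a controlled way.

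First I would read off from Proposition~\ref{P:extra}~(2) the action on labels: $V_{j\gamma+L}(i)\cong V_{i\gamma+L}(ik-j)\circ h_k$, i.e.\ $h_k$ sends the module with label-pair $(i,j)$ (here $i$ is the $\gamma$-coefficient and $j$ the twisted index, matching the statement of \ref{P:extra}~(2)) to the one with pair $(j,\ ik-j)$ for any $0\le k\le n-1$. Composing such maps (and also using the $\hat g$-conjugates from Section~\ref{S:un}, or just powers of these $h_k$) realizes on the set of pairs $\{(s,j)\bmod n\}$ a group action; concretely, from $h_k$ we get the transformation $(s,j)\mapsto(j,\ sk-j)$, which together with $k$ ranging over $\Z/n\Z$ generates (a subgroup of) $SL_2(\Z/n\Z)$ acting on the pairs — in particular it contains the "shear" $(s,j)\mapsto(s,\ j+s\ell)$ for any $\ell$ (obtained by a suitable composition), and the swap $(s,j)\mapsto(j,-s)$ up to sign. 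Since this action is by VOA-automorphisms it preserves $q$ and the order $o(M)$ in $\irr(V_{\Lambda_g}^{\hat g})$, so it maps $\mathcal{S}_g$ into $\mathcal{S}_g$; by Lemma~\ref{L:untwstS} the relevant invariant of $V_{s\gamma+\Lambda_g}(j)$ is just $\lcm(o(s),o(j))=n$ together with $q(s\gamma+\Lambda_g)=0$, and $q(s\gamma+\Lambda_g)=q(\gamma+\Lambda_g)s^2=0$ holds automatically.

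The core of the argument is then elementary: given $(s,j)$ with $\lcm(o(s),o(j))=n$, I want to move it, via the $SL_2(\Z/n\Z)$-action above, to some $(s',j')$ with $o(s')=n$, i.e.\ $\gcd(s',n)=1$. This is a standard fact: the $SL_2(\Z/n\Z)$-orbit of a vector $(s,j)\in(\Z/n\Z)^2$ is determined by $d=\gcd(s,j,n)$, and every orbit with $d=1$ contains a vector whose first coordinate is a unit — indeed, using the shear $(s,j)\mapsto(s+cj,\,j)$ one can, via the Chinese Remainder Theorem, choose $c$ so that $s+cj$ is a unit modulo each prime dividing $n$ (at primes dividing $s$ but not $j$ we use that $j$ is then a unit there, since $\lcm(o(s),o(j))=n$; at primes dividing neither, nothing to do; at primes dividing $j$ but not $s$, $s$ is already a unit there), and then adjust by one swap if needed. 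Here the hypothesis $\lcm(o(s),o(j))=n$ is exactly what guarantees $\gcd(s,j,n)=1$, so we are in the $d=1$ case.

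The main obstacle I anticipate is purely bookkeeping: making sure the transformations that Proposition~\ref{P:extra}~(2) supplies (one family $h_k$, plus the obvious $\hat g$-twists and $\sigma$-conjugates from Section~\ref{S:un}) genuinely generate enough of $SL_2(\Z/n\Z)$ to produce the shear needed in the CRT step — a direct matrix computation with the explicit map $(s,j)\mapsto(j,sk-j)$ — and then translating the abstract "orbit has a unit coordinate" statement back into the module language of the lemma. Once the shear and swap are available, the CRT argument finishes it. No delicate VOA input is needed beyond Propositions~\ref{P:extra} and \ref{P:stabVL1} and Lemma~\ref{L:untwstS}; everything else is arithmetic in $(\Z/n\Z)^2$ for $n\in\{4,6,8\}$.
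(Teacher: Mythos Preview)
Your overall strategy is sound and will work, but it differs from the paper's proof. One bookkeeping slip: from Proposition~\ref{P:extra}~(2), the module $V_{s\gamma+L}(j)$ (with $\gamma$-coefficient $s$ and eigenvalue-index $j$) is conjugate to $V_{j\gamma+L}(jk-s)$, so the correct transformation on pairs is $(s,j)\mapsto(j,\,jk-s)$, not $(s,j)\mapsto(j,\,sk-j)$. This is harmless for your purposes: in matrix form (on column vectors) these are $\begin{pmatrix}0&1\\-1&k\end{pmatrix}=S\,T^{-k}$, which for varying $k$ still generate $SL_2(\Z/n\Z)$, and then your CRT/orbit argument ($\gcd(s,j,n)=1$ since $\lcm(o(s),o(j))=n$, hence the orbit contains a vector with unit first coordinate) goes through.

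The paper takes a much more direct route: it simply observes that $n\in\{4,6,8\}$, so if $o(s)<n$ and $\lcm(o(s),o(j))=n$ then for the prime-power cases $n=4,8$ one must have $o(j)=n$, and a single application of $h_0$ (i.e.\ $k=0$, giving $(s,j)\mapsto(j,-s)$) already produces $\gamma$-coefficient $j$ of order $n$. For $n=6$ with $o(s),o(j)\in\{2,3\}$, one application of $h_1$ gives $(s,j)\mapsto(j,\,j-s)$ with $o(j-s)=6$, and then the previous step finishes. Your $SL_2$ argument is more conceptual and would work for arbitrary $n$, whereas the paper's two-line case analysis exploits the very small values of $n$ at hand and avoids having to verify generation of $SL_2(\Z/n\Z)$ or invoke the orbit classification.
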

\begin{proof} Consider $V_{s\gamma+\Lambda_g}(j)$ with $\lcm(o(s),o(j))=n$ and $o(s)<n$.
By the assumption on $g$, we have $n=4,6$ or $8$.
If $n=o(j)$, then we have done by substituting $0$ for $k$ in Proposition \ref{P:extra} (2), which proves the cases where $n=4$ and $n=8$.

Assume that $n=6$ and $o(j)<n$.
Then $(o(s),o(j))=(2,3)$ or $(3,2)$.
Substituting $1$ for $k$ in Proposition \ref{P:extra}, we see that $V_{s\gamma+\Lambda_g}(j)$ is conjugate to $V_{j\gamma+\Lambda_g}(\ell)$ with $o(\ell)=6$.
By the argument above, we obtain the result.
\end{proof}

By the definition of $\mathcal{S}_g$ in \eqref{Eq:Sg}, $\Aut(V_{\Lambda_g}^{\hat{g}})$ preserves $\mathcal{S}_g$.
In addition, we obtain the following, which is crucial for our arguments below.

\begin{proposition}\label{P:tranSg} $\Aut(V_{\Lambda_g}^{\hat{g}})$ acts transitively on $\mathcal{S}_g$.
\end{proposition}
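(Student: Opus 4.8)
The plan is to show that $\mathcal{S}_g$ splits into a small number of orbits under the subgroups already known to lie in $\Aut(V_{\Lambda_g}^{\hat{g}})$, and then to glue these orbits together using the extra automorphisms $\zeta$, $h_k$ from Propositions \ref{P:extra} and \ref{untwisted}. First I would use Proposition \ref{untwisted} to reduce to the ``untwisted'' part: every $M\in\mathcal{S}_g$ is conjugate to some $V_{\mu+\Lambda_g}(\ell)$, and since conjugation preserves $q$ and the order in $\irr(V_{\Lambda_g}^{\hat{g}})$, this $V_{\mu+\Lambda_g}(\ell)$ again lies in $\mathcal{S}_g$; by Lemma \ref{L:untwstS} this means $q(\mu+\Lambda_g)=0$ and $\lcm(o(\mu+\Lambda_g),o(\ell))=n$. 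So it suffices to prove that $\Aut(V_{\Lambda_g}^{\hat{g}})$ acts transitively on the set $\{V_{\mu+\Lambda_g}(\ell) : q(\mu+\Lambda_g)=0,\ \lcm(o(\mu+\Lambda_g),o(\ell))=n\}$.

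The next step is to move everything to the ``coordinate'' form $V_{s\gamma+\Lambda_g}(j)$. Within the subgroup $C_{\Aut(V_{\Lambda_g})}(\hat g)/\langle\hat g\rangle$, the quotient $C_{O(\Lambda_g)}(g)/\langle g\rangle$ acts on $\{V_{\mu+\Lambda_g}(j)\}$ via $\mu\mapsto h^{-1}(\mu)$ (Lemma \ref{Lem:conjlift}), fixing $j$, while $\hom(\Lambda_g/(1-g)\Lambda_g,\Z/n\Z)$ acts via Lemma \ref{Lem:conjhom} shifting $j$ by $s$ when $(\alpha|\mu)\in s/n+\Z$. Using Lemma \ref{Lem:conjclass1} (for $4C,6E,8E$ the group $C_{O(\Lambda_g)}(g)/\langle g\rangle$ is transitive on $\mathcal{L}_{g,k}$ for every divisor $k$ of $n$) I can move any $\mu+\Lambda_g$ with $q(\mu+\Lambda_g)=0$ and $o(\mu+\Lambda_g)=k$ onto a fixed representative, which I may take to be $(n/k)\gamma+\Lambda_g$; so $V_{\mu+\Lambda_g}(\ell)$ becomes conjugate to $V_{(n/k)\gamma+\Lambda_g}(\ell)$ for some new $\ell$ with $\lcm(k,o(\ell))=n$. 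Then Lemma \ref{L:gammaij} lets me replace this by an element of $\{V_{s\gamma+\Lambda_g}(j) : o(s)=n,\ 0\le j\le n-1\}$.

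It then remains to show $\Aut(V_{\Lambda_g}^{\hat g})$ is transitive on $\{V_{s\gamma+\Lambda_g}(j) : o(s)=n,\ 0\le j\le n-1\}$. Applying $\hom(\Lambda_g/(1-g)\Lambda_g,\Z/n\Z)$ again (Lemma \ref{Lem:conjhom}): since $o(s)=n$ we may choose $\alpha\in\Lambda_g^*$ with $(\alpha|s\gamma)$ a generator of $(1/n)\Z/\Z$, hence we can shift $j$ by any element of $\Z/n\Z$ and reduce to $j=0$, i.e. to $V_{s\gamma+\Lambda_g}(0)$ with $o(s)=n$. Finally the Coxeter/extra automorphism $\zeta$ of Theorem \ref{T:extra} satisfies $\{V_{s\gamma+\Lambda_g}(0)\circ\zeta : 0\le s\le n-1\}=\{V_{\Lambda_g}(j) : 0\le j\le n-1\}$; thus every $V_{s\gamma+\Lambda_g}(0)$ with $o(s)=n$ is conjugate to some $V_{\Lambda_g}(j)$ with $o(j)=n$ (orders match because $\zeta$ is an automorphism of $\irr(V_{\Lambda_g}^{\hat g})$), and these in turn are all conjugate to $V_{\Lambda_g}(1)$ by Lemma \ref{Lem:conjhom} once more. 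Tracing the chain backwards, every element of $\mathcal{S}_g$ is conjugate to $V_{\Lambda_g}(1)$, giving transitivity.

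The main obstacle I anticipate is bookkeeping rather than conceptual: one must check at each reduction step that the element stays inside $\mathcal{S}_g$ (i.e. that the order in $\irr(V_{\Lambda_g}^{\hat g})$ and the value of $q$ are genuinely preserved and that the lcm/order conditions propagate correctly), and in the $n=6$ case ($6E$) one must carefully handle the mixed case $(o(s),o(j))=(2,3)$ or $(3,2)$ as in Lemma \ref{L:gammaij}, invoking Proposition \ref{P:extra} with the right value of $k$ to convert a non-generating $\gamma$-component into a generating one. The transitivity of $C_{O(\Lambda_g)}(g)/\langle g\rangle$ on each $\mathcal{L}_{g,k}$ (Lemma \ref{Lem:conjclass1}) is exactly what makes the case analysis finite and is the key external input; everything else is a finite composition of the conjugation lemmas already established.
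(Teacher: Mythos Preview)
Your overall strategy is essentially the same as the paper's: reduce to the untwisted sector via Proposition~\ref{untwisted}, use Lemma~\ref{Lem:conjclass1} plus Lemma~\ref{Lem:conjlift} to normalize the coset $\mu+\Lambda_g$ onto a multiple of $\gamma$, invoke Lemma~\ref{L:gammaij} to force $o(s)=n$, and use Lemma~\ref{Lem:conjhom} to adjust the label~$j$. The paper terminates at $V_{\gamma+\Lambda_g}(0)$ rather than at $V_{\Lambda_g}(1)$, but that is only a cosmetic difference.

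There is, however, a genuine slip in your final step. After arriving at $V_{s\gamma+\Lambda_g}(0)$ with $o(s)=n$ you apply $\zeta$ to land on $V_{\Lambda_g}(j)$ with $o(j)=n$, and then write that ``these in turn are all conjugate to $V_{\Lambda_g}(1)$ by Lemma~\ref{Lem:conjhom} once more.'' This does not work: in Lemma~\ref{Lem:conjhom} the shift $s$ is determined by $(\alpha\mid\lambda)\in s/n+\Z$, and for $\lambda\in\Lambda_g$ one has $(\alpha\mid\lambda)\in\Z$, so the shift is always $0$. In other words, $\sigma_{(1-g)^{-1}\alpha}$ fixes every $V_{\Lambda_g}(j)$, and Lemma~\ref{Lem:conjhom} cannot collapse the $\varphi(n)$ modules $V_{\Lambda_g}(j)$ with $o(j)=n$ into a single orbit.

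The fix is immediate with the tools you already have, and is exactly what the paper does: drop the final application of $\zeta$ and instead observe that all cosets $s\gamma+\Lambda_g$ with $o(s)=n$ lie in $\mathcal{L}_{g,n}$. By Lemma~\ref{Lem:conjclass1} (now with $k=n$) they form a single $C_{O(\Lambda_g)}(g)/\langle g\rangle$-orbit, so by Lemma~\ref{Lem:conjlift} every $V_{s\gamma+\Lambda_g}(0)$ is conjugate to some $V_{\gamma+\Lambda_g}(j'')$; since $(\chi_\Delta\mid\gamma)\in (1/n)+\Z$, one more application of Lemma~\ref{Lem:conjhom} brings $j''$ to $0$. (A minor side remark: Lemma~\ref{Lem:conjlift} only gives equality of the \emph{sets} $\{V_{\lambda+\Lambda_g}(j)\}_j$ and $\{V_{h^{-1}(\lambda)+\Lambda_g}(j)\}_j$, not that the label $j$ is fixed; your argument does not actually need the stronger claim, so this is harmless.)
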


\begin{proof} 
By Lemma \ref{Lem:conjclass0}, $(1-g)\Lambda_g^*=\Lambda_g$.
By Proposition \ref{untwisted} and Lemma \ref{L:untwstS}, under $\Aut(V_{\Lambda_g}^{\hat{g}})$, any element in $\mathcal{S}_g$ is conjugate to $V_{\lambda+\Lambda_g}(j)$  for some $\lambda+\Lambda_g\in \mathcal{D}(\Lambda_g)$ and $0\le j\le n-1$ satisfying $\lcm(o(\lambda+\Lambda_g),o(j))=n$.
It follows from $q(V_{\lambda+\Lambda_g}(j))=0$ that $q(\lambda+\Lambda_g)=0$ by \eqref{Eq:quad}.
Set $k=o(\lambda+\Lambda_g)$.
We may assume that $k\neq 0$; otherwise, by Theorem \ref{T:extra} and Proposition \ref{P:Nc}, there exists $\xi\in\Aut(V_{\Lambda_g}^{\hat{g}})$ such that $V_{\Lambda_g}(j)\circ\xi\cong V_{\mu+\Lambda_g}(j')$ for some $\mu\notin\Lambda_g$.
Then $\lambda+\Lambda_g\in\mathcal{L}_{n,k}$, where $\mathcal{L}_{n,k}$ is defined as in \eqref{Def:Lg2}.
Set $k'=n/k$.
Then $q(k'\gamma+\Lambda_g)=0$ and $o(k'\gamma+\Lambda_g)=k$, and hence $k'\gamma+\Lambda_g\in \mathcal{L}_{g,k}$.
By Proposition \ref{normalizer} and Lemmas \ref{Lem:conjlift} and \ref{Lem:conjclass1}, $V_{\lambda+\Lambda_g}(j)$ is conjugate to $V_{k'\gamma+\Lambda_g}(j')$ for some $j'$.
Since $o(V_{k'\gamma+\Lambda_g}(j'))$ is still $n$, Lemma \ref{L:gammaij} shows $\lcm(o(k'\gamma+\Lambda_g),o(j'))=n$.
By Lemmas \ref{Lem:conjclass1} and \ref{L:gammaij}, $V_{k'\gamma+\Lambda_g}(j')$ is conjugate to $V_{\gamma+\Lambda_g}(j'')$ for some $j''$.
Recall that $(\chi_\Delta|\gamma)\in(1/n)+\Z$.
Then by Lemma \ref{Lem:conjhom}, $V_{\gamma+\Lambda_g}(i'')$ is conjugate to $V_{\gamma+\Lambda_g}(0)$.
Therefore we have proved this proposition.
\end{proof}

\subsection{Transitivity for the cases $6G$ and $10F$.}\label{S:db}
In this subsection, we assume that $g\in O(\Lambda)$ belongs to conjugacy classes $6G$ or $10F$.
Set $n=|g|$ and $p=n/2$.
Then $|\hat{g}|=2|g|=2n$ on $V_\Lambda$, and $|\hat{g}|=|g|=n$ on $V_{\Lambda_g}$.

We now recall the quadratic space structure of $\irr(V_{\Lambda_g}^{\hat{g}})$ from \cite{La19}. 
In order to do it, we have to describe certain vectors $u$ and $h$ in $\Lambda_g^*\cap(1/2)\Lambda_g$ as in \cite[Section 5.2.2]{La19}; in the following, we recall how to choose $u$ and $h$.
By \cite{ATLAS}, $g^p\in O(\Lambda)$ belongs to the conjugacy class $2C$; thus, $\Lambda_{g^p} \cong \sqrt{2}D_{12}^+$ and $2\Lambda_{g^p}^*= \Lambda_{g^p}$. Note that $2\Lambda_{g^p}^*\cong \sqrt{2}D_{12}^+$ is not doubly even since $\sqrt2D_{12}^+$ has a norm $6$ vector.
As in \cite[Section 5.2.2]{La19}, let  
$
E= \{ x\in (1/2) \Lambda_{g^p}\mid (x|x)\in \Z\}. 
$
Then $2E$ is a maximal doubly even sublattice of $\Lambda_{g^p} \cong \sqrt{2}D_{12}^+$ and thus $2E\cong\sqrt{2}D_{12}$. 
Let $h$ be a minimal norm vector of $E^*\cong\sqrt2D_{12}^*$.
Then $(h|h)=2$ and $(h|x)\in \Z$ for all $x\in E$ and $(h|x)\in 1/2+\Z$ for all $x\in (1/2)\Lambda_{g^p}\setminus E$.
Let $u$ be a minimal norm vector in $(1/2)\Lambda_{g^p}\setminus E$.
Then $(u|u)=3/2$.
We may choose $u$ so that $(h|u)=1/2$ by using explicit description of $D_{12}$.
Then $(1/2)\Lambda_{g^p}= E \cup (u+E)$ and $E^*=((1/2)\Lambda_{g^p})^*\cup(h+((1/2)\Lambda_{g^p})^*)$. 
We also may assume $u,h\in\Lambda_g^*\cap(1/2)\Lambda_g$

Let us describe $h$ and $u$ under the identification $\Lambda_g\cong L_B(C)$ as in  Section \ref{S:coinv}.
Set $L=L_B(C)$ and $N=L_A(C)$. 
Recall that $L^*= \bigcup_{i=0}^{n-1}(i\chi_\Delta +N^*)$ and $N=\bigcup_{i=0}^{n-1}(i\gamma+L)$.
Let $h$ be a simple root of the root sublattice of $N$ of type $A_1$ (cf.\ Table \ref{glue}) 
and let $u$ be a minimal norm vector in $(n/2)\chi_\Delta+L$.
Then $h,u\in L^*\cap(1/2)L$ and $(h|h)=2$, $(u|u)=3/2$ and $(u|h)=\pm1/2$; we may assume $(u|h)=1/2$ by replacing $h$ by $-h$ if necessary.
In addition, $h$ and $u$ belong to the $-1$-eigenspace of $g^p$.
Hence, viewing $h,u$ as vector in $\Lambda$ under the isomorphism $\Lambda_g^*\cong L^*$, we have $h,u\in\Lambda_{g^p}^*\cap\Lambda_g^*$.
Note that 
\begin{equation}
h\in (n/2)\gamma+\Lambda_g.\label{Eq:h}
\end{equation}
Thus, these $h$ and $u$ agree with the choice discussed above.

Set $$Y_g=\{v\in \Lambda_g^*\mid (v|u),(v|h)\in\Z\}.$$ 
Then $\Lambda_g^*/Y_g$ is isomorphic to $(\Z/2\Z)^2$ since it is generated by $ u+Y_g$ and $h+Y_g$.
It follows from \cite[Theorem 5.8]{La19} that as groups \begin{align*}
\irr(V_{\Lambda_g}^{\hat{g}})=\{V_{\lambda+pu+qh+\Lambda_g}[{\hat{g}^i}](j)\mid \lambda+\Lambda_g\in Y_g/\Lambda_g,\ 0\le i,j\le n-1,\ p,q\in\{0,1\}\}
\end{align*}
Here we choose labelling of $V_{\lambda+pu+qh+\Lambda_g}[{\hat{g}^i}](j)$ as in \cite[Section 5.2.2]{La19}.
In addition, as groups under the fusion products, 
\begin{align}
\irr(V_{\Lambda_g}^{\hat{g}})=\{V_{\lambda+\Lambda_g}(0)\mid \lambda+\Lambda_g\in Y_g/\Lambda_g\}\times\{V_{pu+qh+\Lambda_g}[{\hat{g}^i}](j)\mid 0\le i,j\le n-1,\ p,q\in\{0,1\}\}\label{Eq:irrg22} 
\end{align}
Note that as groups
\begin{equation}
\{V_{\lambda+\Lambda_g}(0)\mid \lambda+\Lambda_g\in  Y_g/\Lambda_g\}\cong Y_g/\Lambda_g,\label{Eq:irrg20}
\end{equation}
and 
\begin{equation}
\{V_{pu+qh+\Lambda_g}[{\hat{g}^i}](j)\mid 0\le i,j\le n-1,\  p,q\in\{0,1\}\}\cong (\Z/2n\Z)^2.\label{Eq:irrg21}
\end{equation}
The isomorphism 
\begin{equation}
\irr(V_{\Lambda_g}^{\hat{g}})\cong Y_g/\Lambda_g\times(\Z/2n\Z)^2\label{Eq:gp2n}
\end{equation} is explicitly given in \cite[(5.15)]{La19} as follows:
\begin{equation}
(\lambda+\Lambda_g,(i,j))\mapsto \begin{cases}V_{\lambda+u+\Lambda_g}[\hat{g}^i]\left(\lfloor\frac{j}{2}\rfloor\right)&(i+j\in2\Z+1,\ 0\le i\le n-1),\\
V_{\lambda+\Lambda_g}[\hat{g}^i]\left(\lfloor\frac{j}{2}\rfloor\right)&(i+j\in2\Z,\ 0\le i\le n-1),\\
V_{\lambda+h+u+\Lambda_g}[\hat{g}^i]\left(\lfloor\frac{j}{2}\rfloor\right)&(i+j\in2\Z+1,\ n\le i\le 2n-1),\\
V_{\lambda+h+\Lambda_g}[\hat{g}^i]\left(\lfloor\frac{j}{2}\rfloor\right)&(i+j\in2\Z,\ n\le i\le 2n-1).
\end{cases}\label{Eq:ij}
\end{equation}
For example, $(\Lambda_g,(1,0))$ and $(\Lambda_g,(0,1))$ correspond to $V_{u+\Lambda_g}[\hat{g}](0)$ and $V_{u+\Lambda_g}(0)$, respectively.
We often use these notations.
In addition, by \cite[Theorem 5.8]{La19}, the quadratic form $q:\irr(V_{\Lambda_g}^{\hat{g}})\to \Q/\Z$ is given by
\begin{equation}
q((\lambda+\Lambda_g,(i,j)))\equiv \varepsilon((\lambda+\Lambda_g,(i,j)))\equiv \frac{(\lambda|\lambda)}{2}+\frac{ij}{2n}+\frac{3}{4}(\overline{i+j})\pmod\Z,\label{Eq:6Gq}
\end{equation}
where $\overline{i+j}=0$ and $1$ if $i+j\in2\Z$ and $i+j\in2\Z+1$, respectively.
Note that $\varepsilon(\irr(V_{\Lambda_g}^{\hat{g}}))\subset (1/2n)\Z$.
Hence the two subgroups \eqref{Eq:irrg20} and \eqref{Eq:irrg21} are orthogonal with respect to the associated bilinear form.

By Table \ref{glue}, the exponent of $Y_g/\Lambda_g$ is $n$. 
By \eqref{Eq:gp2n}, the prime factors of the order of $\irr(V_{\Lambda_g}^{\hat{g}})$ are $2$ and $p=n/2$.
Let $O_r(A)$ denote the maximal $r$-subgroup of the abelian group $A$ for a prime $r$.
It follows from $\gcd(p,2)=1$ that as quadratic space, 
\begin{equation}\label{Eq:6GO23}
(\irr(V_{\Lambda_g}^{\hat{g}}),q)\cong (O_2(\irr(V_{\Lambda_g}^{\hat{g}})),q)\times (O_{p}(\irr(V_{\Lambda_g}^{\hat{g}})),q);
\end{equation}
note that the subgroups $O_2(\irr(V_{\Lambda_g}^{\hat{g}}))$ and $(O_{p}(\irr(V_{\Lambda_g}^{\hat{g}}))$ are commutative.
Obviously, any element of $O(\irr(V_{\Lambda_g}^{\hat{g}}))$ preserves both $O_2(\irr(V_{\Lambda_g}^{\hat{g}}))$ and $O_{p}(\irr(V_{\Lambda_g}^{\hat{g}}))$.
Hence as groups
\begin{equation}\label{Eq:6G023g}O(\irr(V_{\Lambda_g}^{\hat{g}}))\cong O(O_2(\irr(V_{\Lambda_g}^{\hat{g}})))\times O(O_{p}(\irr(V_{\Lambda_g}^{\hat{g}}))).
\end{equation}

Now, we consider the set 
\begin{equation}
\mathcal{S}_g= \{ 2a  \mid a\in \mathrm{Irr}(V_{\Lambda_g}^{\hat{g}}),\ o(a)=2n,\ q(2a)=0\},\label{Eq:Sg2}
\end{equation} 
where $o(a)$ denotes the order of $a$ in the abelian group $\mathrm{Irr}(V_{\Lambda_g}^{\hat{g}})$.
Note that $V_{\Lambda_g}(1)\in\mathcal{S}_g$ since $V_{\Lambda_g}(1)$ corresponds to $(\Lambda_g,(0,2))=2(\Lambda_g,(0,1))$.
Set 
\begin{align*}
\mathcal{S}_g(2)&=\{2a\mid a\in O_2(\irr(V_{\Lambda_g}^{\hat{g}})),\ 
o(a)=4,\ q(2a)=0\},\\
\mathcal{S}_g(p)&=\{a\in O_p(\irr(V_{\Lambda_g}^{\hat{g}}))\mid o(a)=p,\ q(a)=0\}.
\end{align*}
By the quadratic space structure of $\irr(V_{\Lambda_g}^{\hat{g}})$ in \eqref{Eq:gp2n},  \eqref{Eq:ij} and \eqref{Eq:6Gq}, we obtain
\begin{align}
\mathcal{S}_g(2)&=\{(\Lambda_g,(n,0)),\ (\Lambda_g,(0,n)),\ (\Lambda_g,(n,n))\},\label{Eq:Sg22}\\
\mathcal{S}_g(p)&=\{(\mu+\Lambda_g,(4i,4j))\mid \mu+\Lambda_g\in O_p(Y_g/\Lambda_g),\ \frac{(\mu|\mu)}{2}+\frac{ij}{p}\in\Z\}\setminus\{(\Lambda_g,(0,0))\}.\label{Eq:Sgp}
\end{align}
Note that $O_p(\irr(V_{\Lambda_g}^{\hat{g}}))$ is elementary abelian.
By \eqref{Eq:6Gq}, for $a\in O_2(\irr(V_{\Lambda_g}^{\hat{g}}))$ (resp. $a\in O_p(\irr(V_{\Lambda_g}^{\hat{g}}))$), the value $q(a)$ belongs to $(1/4)\Z$ (resp. $(1/p)\Z$).
Hence, under the isomorphism \eqref{Eq:6GO23}, we obtain $$\mathcal{S}_g=\{(a,b)\mid a\in \mathcal{S}_g(2),\ b\in\mathcal{S}_g(p)\}.$$
We will prove that $\Aut(V_{\Lambda_g}^{\hat{g}})$ acts transitively on $\mathcal{S}_g$.

\begin{lemma}\label{L:transg2} $\aut(V_{\Lambda_g}^{\hat{g}})$ acts transitively on $\mathcal{S}_g(2)$.
\end{lemma}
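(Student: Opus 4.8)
The plan is to exhibit, for each of the three elements listed in \eqref{Eq:Sg22}, an explicit automorphism of $V_{\Lambda_g}^{\hat g}$ carrying it to $(\Lambda_g,(n,0))$ (say). Recall that under \eqref{Eq:gp2n}--\eqref{Eq:ij} the three elements of $\mathcal S_g(2)$ correspond to the order-$2$ elements $V_{u+h+\Lambda_g}[\hat g^{\,n}](0)$, $V_{u+\Lambda_g}(0)$ and $V_{h+\Lambda_g}[\hat g^{\,n}](0)$ inside $O_2(\irr(V_{\Lambda_g}^{\hat g}))$; note $g^p$ lies in class $2C$, so these are exactly the ``twisted of order $2$'' and ``untwisted of order $2$'' pieces. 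The key input is that $V_{\Lambda_g}^{\hat g}$ has an extra automorphism: by Proposition \ref{P:Nc} we have $\Lambda_g\cong L_B(C)$, so Theorem \ref{T:extra} supplies $\zeta\in\Aut(V_{\Lambda_g}^{\hat g})$ with $V_{\Lambda_g}\circ\zeta\cong V_N^{\hat g}$, and Proposition \ref{P:extra} describes the conjugate action of $h_k=\sigma_{v_k}\zeta$ on the untwisted/twisted modules, interchanging the roles of the two $\Z/n\Z$-factors.

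First I would handle the relation between the two ``twisted'' elements and the ``untwisted'' one. Since $h\in (n/2)\gamma+\Lambda_g$ by \eqref{Eq:h}, the module $V_{h+\Lambda_g}(0)=V_{(n/2)\gamma+\Lambda_g}(0)$, and combining Theorem \ref{T:extra} with Proposition \ref{P:extra} (taking $k=0$, so $h_0=\zeta$) shows $V_{(n/2)\gamma+\Lambda_g}(0)$ is conjugate to $V_{(n/2)\gamma+\Lambda_g}(\ell)$ for a suitable $\ell$ of order $2$; tracking this through \eqref{Eq:ij} translates the $\Z/n\Z$-component of $\gamma$ into the ``$[\hat g^{\,n}]$'' twist sector. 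More directly: Proposition \ref{P:extra} (2) with an appropriate $k$ gives $V_{j\gamma+\Lambda_g}(i)\cong V_{i\gamma+\Lambda_g}(ik-j)\circ h_k$, and specialising $i,j\in\{0,n/2\}$ moves $(\Lambda_g,(n,0))$, $(\Lambda_g,(0,n))$ and $(\Lambda_g,(n,n))$ into one another, since $n/2$-multiples of $\gamma$ are precisely what carries the order-$2$ data between the two cyclic factors. This already shows that all three elements of $\mathcal S_g(2)$ lie in a single $\langle\zeta, h_k\rangle$-orbit, provided the shift $\sigma_{v_k}$ does not destroy the $q(2a)=0$ condition — but it cannot, since $\Aut(V_{\Lambda_g}^{\hat g})\subset O(\irr(V_{\Lambda_g}^{\hat g}))$ preserves $q$.

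Alternatively, and perhaps more cleanly, I would observe that $C_{O(\Lambda_g)}(g)/\langle g\rangle$ acts transitively on $\mathcal L_{g,\,n/2}$ by Lemma \ref{Lem:conjclass6G10F}, and that $u,h\in\Lambda_g^*\cap(1/2)\Lambda_g$ both have image of order $2$ in $\mathcal D(\Lambda_g)$ with $q(u+\Lambda_g)=q(h+\Lambda_g)=q(u+h+\Lambda_g)=0\pmod\Z$ (using $(u|u)=3/2$, $(h|h)=2$, $(u|h)=1/2$, so $(u+h|u+h)=4$). Thus $u+\Lambda_g, h+\Lambda_g, u+h+\Lambda_g\in\mathcal L_{g,2}$, and any element of $C_{O(\Lambda_g)}(g)$ realising a permutation among them lifts, via Proposition \ref{normalizer} and Lemma \ref{Lem:conjlift}, to an automorphism of $V_{\Lambda_g}^{\hat g}$ permuting the corresponding untwisted modules $V_{u+\Lambda_g}(0)$, $V_{h+\Lambda_g}(0)$, $V_{u+h+\Lambda_g}(0)$ — and by \eqref{Eq:ij} these are $(\Lambda_g,(0,n))$ up to the twist bookkeeping. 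Combining with the $\zeta$-move of the previous paragraph to relate the untwisted and twisted sectors finishes transitivity. The main obstacle I anticipate is purely organisational: one must keep careful track of the parametrisation \eqref{Eq:ij}, since the element labels split $[\hat g^i]$-sectors with $i<n$ versus $i\ge n$ according to the cosets of $h$, and $\lfloor j/2\rfloor$ versus the parity of $i+j$; verifying that the explicitly constructed automorphisms land on the three stated triples $(\Lambda_g,(n,0)),(\Lambda_g,(0,n)),(\Lambda_g,(n,n))$ rather than on some twisted variant requires matching conformal weights via \eqref{Eq:6Gq}, which should pin down the image uniquely since $q$ is $\Aut$-invariant.
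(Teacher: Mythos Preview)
Your overall strategy matches the paper's: identify the three elements of $\mathcal{S}_g(2)$ concretely and then use the extra automorphisms $h_k$ from Proposition~\ref{P:extra} to connect them. However, there are genuine errors in the execution.

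First, your identification of $\mathcal{S}_g(2)$ is wrong. Under \eqref{Eq:ij}, the elements $(\Lambda_g,(n,0))$, $(\Lambda_g,(0,n))$, $(\Lambda_g,(n,n))$ all have $i+j\equiv 0\pmod 2$, so the vector $u$ does \emph{not} appear in the coset label; the ``$n\le i\le 2n-1$'' clause contributes $h$ only. One obtains
\[
\mathcal{S}_g(2)=\{\,V_{h+\Lambda_g}(0),\ V_{\Lambda_g}(p),\ V_{h+\Lambda_g}(p)\,\},\qquad p=n/2,
\]
not the triple $V_{u+h+\Lambda_g}[\hat g^{\,n}](0)$, $V_{u+\Lambda_g}(0)$, $V_{h+\Lambda_g}[\hat g^{\,n}](0)$ you wrote (recall $|\hat g|=n$ on $V_{\Lambda_g}$, so $[\hat g^n]$ is the untwisted sector anyway). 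Once this is corrected, your use of Proposition~\ref{P:extra}(2) with $i,j\in\{0,p\}$ and $h\in p\gamma+\Lambda_g$ does link $V_{\Lambda_g}(p)$ and $V_{h+\Lambda_g}(0)$, exactly as in the paper. The paper handles the remaining element more cleanly than ``varying $k$'': since $(u|h)=1/2=p/n$, Lemma~\ref{Lem:conjhom} gives directly $V_{h+\Lambda_g}(p)\circ\sigma_{(1-g)^{-1}u}\cong V_{h+\Lambda_g}(0)$.

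Second, your ``alternative'' approach does not work. The arithmetic is off: $(u|u)=3/2$ gives $q(u+\Lambda_g)=3/4\not\equiv 0$, and $(u+h|u+h)=3/2+2\cdot(1/2)+2=9/2$, not $4$, so $q(u+h+\Lambda_g)=1/4\not\equiv 0$. Thus $u+\Lambda_g$ and $u+h+\Lambda_g$ are not in $\mathcal{L}_{g,2}$. Moreover, Lemma~\ref{Lem:conjclass6G10F} concerns $\mathcal{L}_{g,\,|g|/2}=\mathcal{L}_{g,p}$, the set of singular elements of order $p$ (with $p=3$ or $5$ here), not order~$2$; it says nothing about transitivity on order-$2$ singular cosets in $\mathcal{D}(\Lambda_g)$, so this route cannot be salvaged.
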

\begin{proof} 
By \eqref{Eq:ij} and \eqref{Eq:Sg22}, 
\begin{equation}
\mathcal{S}_g(2)=\{V_{\Lambda_g}(p),V_{h+\Lambda_g}(0),V_{h+\Lambda_g}(p)\}.\label{Eq:set2}
\end{equation}
By Proposition \ref{P:extra}, along with $\Lambda_g\cong L_B(C)$ and the choice of $h$ in \eqref{Eq:h}, $V_{\Lambda_g}(p)$ and $V_{h+\Lambda_g}(0)$ are conjugate under $\Aut(V_{\Lambda_g}^{\hat{g}})$.
It follows from $(u|h)=1/2=p/n$ and Lemma \ref{Lem:conjhom} that $V_{h+\Lambda_g}(0)\cong V_{h+\Lambda_g}(p)\circ \sigma_{(1-g)^{-1}u}$.
\end{proof}
\begin{lemma}\label{6GO3}
$\aut(V_{\Lambda_g}^{\hat{g}})$ acts transitively on $\mathcal{S}_g(p)$.
\end{lemma}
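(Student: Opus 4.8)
The plan is to mirror the structure of the proof of Lemma~\ref{L:transg2}, but now working inside the odd part $O_p(\irr(V_{\Lambda_g}^{\hat{g}}))$, where $p=n/2$ is a prime ($p=3$ for $6G$, $p=5$ for $10F$). First I would use the description \eqref{Eq:Sgp} together with the orthogonal decomposition \eqref{Eq:6GO23} and \eqref{Eq:6G023g}; since any automorphism of $\irr(V_{\Lambda_g}^{\hat{g}})$ preserves $O_p(\irr(V_{\Lambda_g}^{\hat{g}}))$, it suffices to produce enough automorphisms of $V_{\Lambda_g}^{\hat{g}}$ whose induced action on the $p$-part moves any given element of $\mathcal{S}_g(p)$ to a fixed reference element. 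A natural reference is the image of $V_{\Lambda_g}(1)$ in the $p$-part, i.e. $(\Lambda_g,(0,4))$ in the coordinates of \eqref{Eq:ij}, which lies in $\mathcal{S}_g(p)$ by \eqref{Eq:6Gq}.

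The key steps, in order, would be: (1) By Proposition~\ref{untwisted}, every irreducible module — in particular every element of $\mathcal{S}_g(p)$, viewed as an element of $\irr(V_{\Lambda_g}^{\hat{g}})$ after multiplying by the appropriate $2$-part component to land back in $\mathcal{S}_g$ — is $\Aut(V_{\Lambda_g}^{\hat{g}})$-conjugate to some untwisted module $V_{\mu+\Lambda_g}(\ell)$. Combined with the orthogonal splitting, this reduces the problem to showing transitivity on the untwisted representatives whose $p$-part has order $p$ and trivial $q$-value. (2) For such a representative $V_{\mu+\Lambda_g}(\ell)$, the $p$-part of $\mu+\Lambda_g$ together with $\ell$ determines an element of $\mathcal{L}_{g,\,|g|/2}$-type data; here I would invoke Lemma~\ref{Lem:conjclass6G10F}, which says $C_{O(\Lambda_g)}(g)/\langle g\rangle$ acts transitively on $\mathcal{L}_{g,\,|g|/2}$, and Lemmas~\ref{Lem:conjlift} and~\ref{normalizer} to realize this as a conjugation by an element of $\Aut(V_{\Lambda_g}^{\hat{g}})$, moving $\mu+\Lambda_g$ (up to $2$-part) to a multiple of $\gamma$. (3) Then, exactly as in Proposition~\ref{P:tranSg}, I would use Proposition~\ref{P:extra} and Theorem~\ref{T:extra} (available since $\Lambda_g\cong L_B(C)$ by Proposition~\ref{P:Nc}) to shuffle the index $\ell$ among the coordinates, and finally use Lemma~\ref{Lem:conjhom} with $(\chi_\Delta|\gamma)\in(1/n)+\Z$ to normalize the second coordinate to the reference element. (4) Since $q$ is preserved throughout and $\mathcal{S}_g(p)$ was defined precisely by $o(a)=p$ and $q(a)=0$, every orbit step stays inside $\mathcal{S}_g(p)$, so transitivity follows.

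I expect the main obstacle to be step~(2): carefully bookkeeping how an element of $\mathcal{S}_g(p)$, which after the isomorphism \eqref{Eq:gp2n} is a pair $(\mu+\Lambda_g,(4i,4j))$ with $\mu\in O_p(Y_g/\Lambda_g)$, translates back into an honest untwisted module $V_{\mu'+\Lambda_g}(\ell)$ (via \eqref{Eq:ij}, one must also carry along a $2$-part component so that the resulting module is genuinely untwisted and the relevant norm $(\mu'|\mu')/2$ has the right value mod $\Z$), and then checking that the order condition $o(\mu'+\Lambda_g)$ restricted to the $p$-part equals $p$ so that Lemma~\ref{Lem:conjclass6G10F} applies. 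This is a finite check about the glue vectors in Table~\ref{glue}, so in the final write-up it may be cleanest to verify it directly by MAGMA as was done for the transitivity lemmas in Section~\ref{S:coinv}, rather than arguing it abstractly. Once that translation is pinned down, the rest of the argument is a direct transcription of the proof of Proposition~\ref{P:tranSg}.
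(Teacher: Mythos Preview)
Your approach is essentially the paper's: reduce to untwisted modules via Proposition~\ref{untwisted}, then combine Lemma~\ref{Lem:conjclass6G10F} (through Lemmas~\ref{Lem:conjlift} and~\ref{Lem:conjhom}) with Proposition~\ref{P:extra}. The obstacle you anticipate in step~(2) does not actually arise: since $O_p(\irr(V_{\Lambda_g}^{\hat{g}}))$ is a characteristic subgroup, $\mathcal{S}_g(p)$ is already an $\Aut(V_{\Lambda_g}^{\hat{g}})$-stable subset of $\irr(V_{\Lambda_g}^{\hat{g}})$, so no lifting to $\mathcal{S}_g$ or carrying of $2$-part components is needed---one applies Proposition~\ref{untwisted} directly to elements of $\mathcal{S}_g(p)$, and via \eqref{Eq:ij} the resulting untwisted members of $\mathcal{S}_g(p)$ are exactly the two families $\{V_{\Lambda_g}(2j)\mid 1\le j\le p-1\}$ and $\{V_{\mu+\Lambda_g}(2j)\mid \mu+\Lambda_g\in\mathcal{L}_{g,p},\ 0\le j\le p-1\}$; the paper shows the second family is a single $C_{\Aut(V_{\Lambda_g})}(\hat{g})/\langle\hat{g}\rangle$-orbit and uses Proposition~\ref{P:extra} to move the first into it.
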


\begin{proof}
Recall from Lemma \ref{Lem:conjclass0} that $(1-g)\Lambda_g^*=\Lambda_g$.
By Proposition \ref{untwisted}, any element of $\mathcal{S}_g(p)$ is conjugate to $V_{\mu+\Lambda_g}(\ell)$ for some $\mu+\Lambda_g\in\mathcal{D}(\Lambda_g)$ and $0\le \ell\le n-1$.
By the group structure of $\irr(V_{\Lambda_g}^{\hat{g}})$ in \eqref{Eq:gp2n} and \eqref{Eq:ij}, $V_{\mu+\Lambda_g}(\ell)\in\mathcal{S}_g(p)$ belongs to one of the following:

\begin{enumerate}[(I)]
\item $\{V_{\Lambda_g}(2j)\mid 1\le j\le p-1\}$;
\item 
$\{V_{\mu+\Lambda_g}(2j)\mid \mu+\Lambda_g\in \mathcal{L}_{g,p}
,\ 0\le j\le p-1\}$.
\end{enumerate}

By Lemmas \ref{Lem:conjlift}, \ref{Lem:conjhom} and \ref{Lem:conjclass6G10F}, any two elements in (II) are conjugate by some element in $C_{\Aut(V_{\Lambda_g})}(\hat{g})/\langle\hat{g}\rangle\subset\Aut(V_{\Lambda_g}^{\hat{g}})$.
By Proposition \ref{P:extra}, any element in (I) is also conjugate to some element in (II).
Therefore we obtain this lemma.
\end{proof}

\begin{lemma}\label{L:sigmau}
The automorphism $\sigma_{(1-g)^{-1}u}$ permutes two elements in $\mathcal{S}_g(2)$, and it fixes every element in $\mathcal{S}_g(p)$.
\end{lemma}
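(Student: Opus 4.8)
The plan is to treat $\mathcal{S}_g(2)$ and $\mathcal{S}_g(p)$ separately. First note that $(1-g)^{-1}u\in(1-g)^{-1}\Lambda_g^*$, so by \eqref{Eq:hom} the automorphism $\sigma_{(1-g)^{-1}u}$ lies in $\hom(\Lambda_g/(1-g)\Lambda_g,\Z/n\Z)\subset\Aut(V_{\Lambda_g}^{\hat g})$ and Lemma \ref{Lem:conjhom} is available with $\alpha=u$. For $\mathcal{S}_g(2)$: by \eqref{Eq:set2} we have $\mathcal{S}_g(2)=\{V_{\Lambda_g}(p),\,V_{h+\Lambda_g}(0),\,V_{h+\Lambda_g}(p)\}$, and all three are $V_{\Lambda_g}^{\hat g}$-submodules of untwisted lattice modules, so Lemma \ref{Lem:conjhom} applies verbatim. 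Since $(u|0)=0$ the module $V_{\Lambda_g}(p)$ is fixed, and since $(u|h)=1/2=p/n$ (recall $n=2p$) the lemma gives $V_{h+\Lambda_g}(i)\circ\sigma_{(1-g)^{-1}u}\cong V_{h+\Lambda_g}(i-p)$; as $-p\equiv p\pmod n$ this interchanges $V_{h+\Lambda_g}(0)$ and $V_{h+\Lambda_g}(p)$. Hence $\sigma_{(1-g)^{-1}u}$ fixes one element of $\mathcal{S}_g(2)$ and transposes the other two, which is the first assertion.

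For $\mathcal{S}_g(p)$ I would reduce to a single twisted module. The conjugation action of $\sigma_{(1-g)^{-1}u}$ is a group automorphism of $(\irr(V_{\Lambda_g}^{\hat g}),\boxtimes)$ by Lemma \ref{Lem:conj}(2); using the group structure \eqref{Eq:gp2n}, \eqref{Eq:ij} together with $O_p(\Z/2n\Z)=\langle 4+2n\Z\rangle$ (as $n=2p$ with $p$ odd), the elements $V_{\mu+\Lambda_g}(0)$ with $\mu+\Lambda_g\in O_p(Y_g/\Lambda_g)$, the element $V_{\Lambda_g}(2)$, and the element $V_{\Lambda_g}[\hat g^{4}](0)$ generate the subgroup $O_p(\irr(V_{\Lambda_g}^{\hat g}))$, which contains $\mathcal{S}_g(p)$. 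For $\mu+\Lambda_g\in Y_g/\Lambda_g$ we have $(u|\mu)\in\Z$ by the very definition of $Y_g$, so $V_{\mu+\Lambda_g}(0)$ is fixed by Lemma \ref{Lem:conjhom}; likewise $V_{\Lambda_g}(2)$ is fixed since $(u|0)=0$. Thus it remains only to prove $V_{\Lambda_g}[\hat g^{4}](0)\circ\sigma_{(1-g)^{-1}u}\cong V_{\Lambda_g}[\hat g^{4}](0)$, after which multiplicativity of the conjugation over $\boxtimes$ shows that $\sigma_{(1-g)^{-1}u}$ fixes all of $\mathcal{S}_g(p)$ (indeed all of $O_p(\irr(V_{\Lambda_g}^{\hat g}))$).

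To handle this twisted module, observe that $u-(n/2)\chi_\Delta\in\Lambda_g$ and $(1-g)^{-1}\Lambda_g=\Lambda_g^*$ by Lemma \ref{Lem:conjclass0}(1), so $\sigma_{(1-g)^{-1}u}$ and $\sigma_{(1-g)^{-1}((n/2)\chi_\Delta)}$ differ, as automorphisms of $V_{\Lambda_g}$, by some $\sigma_z$ with $z\in\Lambda_g^*$, which acts trivially on $V_{\Lambda_g}$; hence they induce the same conjugation on $\hat g^{4}$-twisted $V_{\Lambda_g}$-modules. Writing $f=g^{4}$, whose order is $p=n/2$ so that $f$ is not fixed-point free on $\Lambda_g$, I would use the $\Delta$-operator description of $\hat f$-twisted modules from Section \ref{S:tw} to show that the conjugate of $V_{\Lambda_g}[\hat g^{4}]$ is $V_{\mu'+\Lambda_g}[\hat g^{4}]$ with $\mu'\equiv(1-g^{4})(1-g)^{-1}u=(1+g+g^2+g^3)u\pmod{\Lambda_g}$ — the natural extension to the non-invertible operator $1-g^{4}$ of Lemma \ref{Lem:conjun}. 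One then checks $(1+g+g^2+g^3)u\in\Lambda_g$: since $1+g+g^2+g^3$ annihilates the $(-1)$-eigenspace of $g$ while $u$ lies in the $(-1)$-eigenspace of $g^{n/2}$, and since $2u\in\Lambda_g$, this is a finite check on the remaining eigencomponents of $u$, carried out with the explicit data of Table \ref{glue} in MAGMA. Consequently the conjugate equals $V_{\Lambda_g}[\hat g^{4}](j')$ for some $0\le j'\le n-1$; since conjugation preserves the conformal weight and hence $q$, formula \eqref{Eq:6Gq} forces $4j'/n\in\Z$, so $j'\in\{0,n/2\}$, and since conjugation preserves the order of a module in $\irr(V_{\Lambda_g}^{\hat g})$, the value $j'=n/2$ is excluded (it would give order $2p$ rather than $p$). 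Thus $j'=0$, which finishes the argument.

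The main obstacle is the twisted-module step: one must set up the precise conjugation formula for $\sigma_{(1-g)^{-1}u}$ acting on a $\hat g^{4}$-twisted $V_{\Lambda_g}$-module when $\gcd(4,n)\neq1$ — so that Lemma \ref{Lem:conjun} does not apply directly and one has to track simultaneously the ``untwisted'' directions $\C[v+P_0^f(\cdot)]$ and the ``twisted'' factor $T_\lambda$ in the construction of Section \ref{S:tw} — and then verify the lattice membership $(1+g+g^2+g^3)u\in\Lambda_g$. Everything else is routine bookkeeping with Lemma \ref{Lem:conjhom}, Lemma \ref{Lem:conj} and the explicit quadratic-space structure of $\irr(V_{\Lambda_g}^{\hat g})$ recorded above.
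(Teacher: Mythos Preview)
Your treatment of $\mathcal{S}_g(2)$ matches the paper's exactly. For $\mathcal{S}_g(p)$, however, you take a harder road than necessary, and the ``main obstacle'' you identify is one the paper's proof sidesteps entirely.

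Your plan is to show that $\sigma_{(1-g)^{-1}u}$ fixes the generator $V_{\Lambda_g}[\hat g^{4}](0)$ of $O_p(\irr(V_{\Lambda_g}^{\hat g}))$ directly, which forces you to extend Lemma~\ref{Lem:conjun} to $\hat g^{4}$-twisted modules where $\gcd(4,n)=2$ and $g^4$ is not fixed-point free. As you say, this requires tracking both the $\C[v+P_0^f(\cdot)]$ and $T_\lambda$ factors, and the labeling convention of Remark~\ref{Re:Tlambda} is only stated for $m=n$; the ``natural extension'' you invoke is not proved in the paper and is genuinely delicate. You then also need the lattice check $(1+g+g^2+g^3)u\in\Lambda_g$, which you defer to MAGMA.

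The paper avoids all of this by working instead with the $\hat g^{1}$-twisted module, where Lemma~\ref{Lem:conjun} applies verbatim since $\gcd(1,n)=1$. Concretely: by Lemma~\ref{Lem:conjun} with $i=1$ and $\alpha=u$, the involution $\sigma_{(1-g)^{-1}u}$ swaps $(\Lambda_g,(1,1))=V_{\Lambda_g}[\hat g](0)$ and $(\Lambda_g,(1,0))=V_{u+\Lambda_g}[\hat g](0)$; hence it fixes their sum $(2,1)\in(\Z/2n\Z)^2$, and therefore fixes $4(2,1)=(8,4)$. Together with the easy fixing of $V_{\Lambda_g}(2)$, this gives fixing of $\langle(8,4),(0,4)\rangle=\{(4i,4j)\mid 0\le i,j<p\}$, which is all of $O_p((\Z/2n\Z)^2)$. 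No extension of Lemma~\ref{Lem:conjun}, no lattice computation, no MAGMA --- just Lemma~\ref{Lem:conjun} in the case where it is already available, followed by the group-automorphism property of conjugation. Your reduction to generators of $O_p(\irr)$ is sound; the point is that a smarter choice of witness (one in the $\hat g$-twisted sector rather than the $\hat g^4$-twisted sector) makes the remaining step trivial.
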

\begin{proof} The former assertion follows from $(u|h)=1/2$, Lemma \ref{Lem:conjhom} and \eqref{Eq:set2}.

It follows from $(u|Y_g)\subset\Z$ and Lemma \ref{Lem:conjhom} that $\sigma_{(1-g)^{-1}u}$ fixes $(\mu+\Lambda_g,(0,0))$ for any $\mu+\Lambda_g\in Y_g/\Lambda_g$. 
By \eqref{Eq:Sgp}, it suffices to prove that $\sigma_{(1-g)^{-1}u}$ fixes every element in $\{(4i,4j)\mid 0\le i,j< p\}$, the set of all order $1$ and $p$ elements in $(\Z/2n\Z)^2$.
By Lemma \ref{Lem:conjun}, $\sigma_{(1-g)^{-1}u}$ exchanges $(\Lambda_g,(1,1))=V_{\Lambda_g}[\hat{g}](0)$ and $(\Lambda_g,(1,0))=V_{u+\Lambda_g}[\hat{g}](0)$ (cf. \eqref{Eq:ij}).
Hence $\sigma_{(1-g)^{-1}u}$ fixes $(2,1)\in(\Z/2n\Z)^2$, and $4(2,1)=(8,4)\in(\Z/2n\Z)^2$.
It also fixes $(\Lambda_g,(4,0))=V_{\Lambda_g}(2)$ by Lemma \ref{Lem:conjhom}.
Hence $\sigma_{(1-g)^{-1}u}$ fixes every element in $\langle (8,4),(4,0)\rangle=\{(4i,4j)\mid  0\le i,j< p\}$.
\end{proof}

\begin{proposition}\label{6GS}
$\aut(V_{\Lambda_g}^{\hat{g}})$ acts transitively on the set 
$\mathcal{S}_g$.
\end{proposition}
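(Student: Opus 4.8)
The plan is to exploit the orthogonal decomposition $(\irr(V_{\Lambda_g}^{\hat{g}}),q)\cong(O_2(\irr(V_{\Lambda_g}^{\hat{g}})),q)\times(O_p(\irr(V_{\Lambda_g}^{\hat{g}})),q)$ of \eqref{Eq:6GO23}, under which $\mathcal{S}_g$ is the product set $\{(a,b)\mid a\in\mathcal{S}_g(2),\ b\in\mathcal{S}_g(p)\}$, and then to play off the two partial transitivity statements already established. By \eqref{Eq:6G023g} and Proposition \ref{conj_mu} we may regard $\aut(V_{\Lambda_g}^{\hat{g}})$ as a subgroup of $O(O_2(\irr(V_{\Lambda_g}^{\hat{g}})))\times O(O_p(\irr(V_{\Lambda_g}^{\hat{g}})))$; write $\pi_2,\pi_p$ for the two coordinate projections. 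Since $\mathcal{S}_g(2)$ and $\mathcal{S}_g(p)$ are defined purely in terms of $q$ and element orders, $\aut(V_{\Lambda_g}^{\hat{g}})$ preserves each of them and acts on them through $\pi_2$, resp.\ $\pi_p$, transitively by Lemma \ref{L:transg2}, resp.\ Lemma \ref{6GO3}.

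I would first reduce to a single coordinate. Given $M=(a,b)$ and $M'=(a',b')$ in $\mathcal{S}_g$, Lemma \ref{L:transg2} provides $\tau\in\aut(V_{\Lambda_g}^{\hat{g}})$ with $\pi_2(\tau)(a)=a'$, and replacing $M$ by $\tau\cdot M=(a',\pi_p(\tau)(b))$ we may assume $a=a'$. It then suffices to show that for every $a_0\in\mathcal{S}_g(2)$ the point stabilizer $\stab_{\aut(V_{\Lambda_g}^{\hat{g}})}(a_0)$ maps under $\pi_p$ onto a subgroup acting transitively on $\mathcal{S}_g(p)$. Because $\aut(V_{\Lambda_g}^{\hat{g}})$ is transitive on the three-element set $\mathcal{S}_g(2)$, the three such stabilizers are conjugate in $\aut(V_{\Lambda_g}^{\hat{g}})$, and since $\mathcal{S}_g(p)$ is invariant under all of $O(O_p(\irr(V_{\Lambda_g}^{\hat{g}})))$, transitivity of the $\pi_p$-image of one of them on $\mathcal{S}_g(p)$ forces it for the others; so it is enough to treat $a_0=V_{\Lambda_g}(p)$.

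To verify this I would assemble enough of $\stab_{\aut(V_{\Lambda_g}^{\hat{g}})}(V_{\Lambda_g}(p))$. By Proposition \ref{P:stabVL1}~(1), $C_{\aut(V_{\Lambda_g})}(\hat{g})/\langle\hat{g}\rangle$ is the stabilizer of $V_{\Lambda_g}(1)$, hence fixes every $V_{\Lambda_g}(j)$, in particular $V_{\Lambda_g}(p)$; and by Lemmas \ref{Lem:conjlift}, \ref{Lem:conjhom} and \ref{Lem:conjclass6G10F} its $\pi_p$-image is already transitive on the subset of $\mathcal{S}_g(p)$ consisting of the untwisted modules $V_{\mu+\Lambda_g}(2j)$ with $\mu+\Lambda_g\in\mathcal{L}_{g,p}$ (the family of case (II) in the proof of Lemma \ref{6GO3}). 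The automorphism $\sigma_{(1-g)^{-1}u}$ also fixes $V_{\Lambda_g}(p)$ but, by Lemma \ref{L:sigmau}, acts trivially on $O_p(\irr(V_{\Lambda_g}^{\hat{g}}))$, so it is inert here although it is what makes $\aut(V_{\Lambda_g}^{\hat{g}})$ transitive on $\mathcal{S}_g(2)$. It then remains to reach, from within the stabilizer of $V_{\Lambda_g}(p)$, the remaining members of $\mathcal{S}_g(p)$: the modules $V_{\Lambda_g}(2j)$ of case (I) and the twisted ones. For case (I) I would use the extra automorphisms $h_k$ of Proposition \ref{P:extra} in pairs $h_kh_{k'}$ with $k'$ even: by the formula $h_k(V_{j\gamma+\Lambda_g}(i))=V_{i\gamma+\Lambda_g}(ik-j)$ together with $h\in(n/2)\gamma+\Lambda_g$ from \eqref{Eq:h}, such a product fixes $V_{\Lambda_g}(p)$ while sending $V_{\Lambda_g}(2j)$ into the case-(II) family (note $2j\gamma+\Lambda_g\in\mathcal{L}_{g,p}$ because $\gcd(j,p)=1$ and $(\gamma|\gamma)\in2\Z$). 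For a twisted element I would first apply Proposition \ref{untwisted} to move it to an untwisted module, then compose with one of the preceding automorphisms to bring $V_{\Lambda_g}(p)$ back to itself. Combining these, $\stab_{\aut(V_{\Lambda_g}^{\hat{g}})}(V_{\Lambda_g}(p))$ is transitive on $\mathcal{S}_g(p)$, whence by the reduction above $\aut(V_{\Lambda_g}^{\hat{g}})$ is transitive on $\mathcal{S}_g$; finally $V_{\Lambda_g}(1)$ corresponds to $(\Lambda_g,(0,2))=2(\Lambda_g,(0,1))$, so it lies in $\mathcal{S}_g$ and $\mathcal{S}_g$ is its $\aut(V_{\Lambda_g}^{\hat{g}})$-orbit.

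The main obstacle is exactly this coupling of the two factors: $\aut(V_{\Lambda_g}^{\hat{g}})$ is transitive on $\mathcal{S}_g(2)$ and on $\mathcal{S}_g(p)$ separately, but a priori its image in $O(O_2(\irr(V_{\Lambda_g}^{\hat{g}})))\times O(O_p(\irr(V_{\Lambda_g}^{\hat{g}})))$ is not a direct product, so transitivity on the product set forces one to produce automorphisms moving one factor while fixing a chosen point of the other. The swap $V_{h+\Lambda_g}(0)\leftrightarrow V_{h+\Lambda_g}(p)$ of $\mathcal{S}_g(2)$ comes for free via $\sigma_{(1-g)^{-1}u}$, which is inert on $O_p(\irr(V_{\Lambda_g}^{\hat{g}}))$, but the move $V_{\Lambda_g}(p)\leftrightarrow V_{h+\Lambda_g}(0)$ is only realised by the extra automorphism $\zeta$, which disturbs $O_p(\irr(V_{\Lambda_g}^{\hat{g}}))$; unravelling this, and likewise handling the twisted elements of $\mathcal{S}_g(p)$ without losing control of $V_{\Lambda_g}(p)$, is where the explicit coordinates \eqref{Eq:ij}, \eqref{Eq:6Gq} and the lattice data of Section \ref{S:coinv} (notably $(u|h)=1/2$, $(u|u)=3/2$ and $h\in(n/2)\gamma+\Lambda_g$) have to be used carefully.
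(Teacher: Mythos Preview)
Your reduction is the same as the paper's: show that the stabiliser $G_{a_0}$ of some $a_0\in\mathcal{S}_g(2)$ acts transitively on $\mathcal{S}_g(p)$. Where you diverge is in how you prove this. You attempt a direct construction, and your treatment of the untwisted elements (cases (I) and (II)) is essentially fine: the products $h_kh_{k'}$ with $k'$ even do fix $V_{\Lambda_g}(p)$ and, for $k'\neq 0$, carry $V_{\Lambda_g}(2j)$ into the case-(II) family as you say.

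The gap is your step for the \emph{twisted} elements of $\mathcal{S}_g(p)$. Proposition \ref{untwisted} gives an automorphism $\tau$ carrying a twisted $M$ to an untwisted module, but $\tau$ is built from $\zeta$ and the $h_s$, and these do not fix $V_{\Lambda_g}(p)$. Your proposed cure, ``compose with one of the preceding automorphisms to bring $V_{\Lambda_g}(p)$ back to itself'', is circular: the only automorphisms you have produced that move points of $\mathcal{S}_g(2)$ are again $\zeta$ and the $h_k$, and by Lemma \ref{phiandgtwisted} these exchange twisted and untwisted types, so the correction may re-twist $\tau(M)$. You have not exhibited any element of $G_{V_{\Lambda_g}(p)}$ that reaches a genuinely twisted member of $\mathcal{S}_g(p)$.

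The paper avoids this case analysis entirely by using precisely the ingredient you dismiss as ``inert''. Let $K'$ be the kernel of the $G$-action on $\mathcal{S}_g(p)$. By Lemma \ref{L:sigmau}, $\sigma_{(1-g)^{-1}u}\in K'$ acts as a transposition on $\mathcal{S}_g(2)$. Since $K'\trianglelefteq G$ and the image of $G$ in $\Sym(\mathcal{S}_g(2))$ is transitive and contains a transposition, that image is $\Sym_3$; the image of $K'$ is then a normal subgroup of $\Sym_3$ containing a transposition, hence all of $\Sym_3$. Thus $K'$ is already transitive on $\mathcal{S}_g(2)$, so given $(a,y)$ and $(a',y')$ one first moves $y$ to $y'$ by Lemma \ref{6GO3} and then corrects the $\mathcal{S}_g(2)$-coordinate by an element of $K'$, which by definition leaves $y'$ alone. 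The paper phrases this as a short contradiction argument; either way, the point is that Lemma \ref{L:sigmau} decouples the two factors, and no explicit tracking of twisted modules is needed.
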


\begin{proof}
Recall from \eqref{Eq:set2} that $|\mathcal{S}_g(2)|=3$; let $\mathcal{S}_g(2)=\{a,b,c\}$.
Set $G=\Aut(V_{\Lambda_g}^{\hat{g}})$.
Let $G_a$ be the stabilizer of $a$ in $G$.
By Lemma \ref{L:transg2}, we have $|G:G_a|=3$.
Take $y\in \mathcal{S}_g(p)$ and set $\mathcal{O}=\{gy\mid g\in G_a\}$.
If $\mathcal{O}=\mathcal{S}_g(p)$, then we obtain the result by Lemma \ref{L:transg2}.

Suppose, for a contradiction, that $\mathcal{O}\neq\mathcal{S}_g(p)$.
Then by Lemmas \ref{L:transg2} and \ref{6GO3}, $|\mathcal{S}_g(p)|/|\mathcal{O}|=3$; there exist $G_a$-orbits $\mathcal{O}'$ and $\mathcal{O}''$ such that $\mathcal{S}_g(p)=\mathcal{O}\cup\mathcal{O}'\cup\mathcal{O}''$ and the $G$-orbit of $(a,y)$ must be \begin{equation}\label{Eq:orb}
(\{a\}\times \mathcal{O})\cup(\{b\}\times\mathcal{O}')\cup(\{c\}\times\mathcal{O}'').
\end{equation}
However, this is not a $G$-orbit by Lemma \ref{L:sigmau}, which is a contradiction.
\end{proof}

\subsection{Shapes of the automorphism groups of $V_{\Lambda_g}^{\hat{g}}$.}

In this subsection, we describe the shapes of the automorphism groups of $V_{\Lambda_g}^{\hat{g}}$ for $g\in 4C,6E,6G,8E$, and $10F$.

The quadratic space structure of $(\irr(V_{\Lambda_g}^{\hat{g}}),q))$  and the subset $\mathcal{S}_g$ of $\irr(V_{\Lambda_g}^{\hat{g}})$ are given in \eqref{Eq:quad} and \eqref{Eq:Sg} (resp in \eqref{Eq:quado} and \eqref{Eq:Sg2}) if $g$ is in the conjugacy class $4C,6G$ or $8E$ (resp. $6E$ or $10F$).
By using MAGMA, we obtain the following proposition; note that in Table \ref{T:quadVOA}, $[2^s]$ means a $2$-group of order $2^s$.

\begin{proposition}\label{P:orthg}
Let $g\in O(\Lambda)$ whose conjugacy class is $4C,6E,6G,8E$ or $10F$.
Then the shape of the orthogonal group $O(\irr(V_{\Lambda_g}^{\hat{g}}))$ is given as in Table \ref{T:quadVOA}.
Moreover, 
$O(\irr(V_{\Lambda_g}^{\hat{g}}))$ acts transitively on $\mathcal{S}_g$
and the shape of the stabilizer of an element in $\mathcal{S}_g$
is also given as in Table \ref{T:quadVOA}.
\end{proposition}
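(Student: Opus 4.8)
The proof is essentially a finite computation, so the plan is to set up each of the five quadratic spaces $\irr(V_{\Lambda_g}^{\hat g})$ explicitly and hand the rest to MAGMA. Concretely, for each conjugacy class I would build $\mathcal{D}(\Lambda_g)$ from the explicit gluing data in Table \ref{glue} (using $\Lambda_g\cong L_B(C)$ from Proposition \ref{P:Nc}), then assemble $\irr(V_{\Lambda_g}^{\hat g})$ together with its quadratic form $q$ via the explicit formulas: for $4C,6E,8E$ this is \eqref{Eq:quad} and the direct-sum decomposition \eqref{Eq:quado}, and for $6G,10F$ it is \eqref{Eq:gp2n}, \eqref{Eq:ij} and \eqref{Eq:6Gq}. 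Having realized $(\irr(V_{\Lambda_g}^{\hat g}),q)$ as a concrete finite quadratic module, I would ask MAGMA to compute its orthogonal group $O(\irr(V_{\Lambda_g}^{\hat g}))$ and identify its shape from the ATLAS-style structure; this gives the second column of Table \ref{T:quadVOA}.

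Next I would construct the subset $\mathcal{S}_g$ inside this model: for $4C,6G,8E$ as $\{M: q(M)=0,\ o(M)=n\}$ from \eqref{Eq:Sg}, and for $6E,10F$ as $\{2a: o(a)=2n,\ q(2a)=0\}$ from \eqref{Eq:Sg2} (equivalently, via the splitting $\mathcal{S}_g(2)\times\mathcal{S}_g(p)$ recorded in \eqref{Eq:Sg22}--\eqref{Eq:Sgp}). Then I would have MAGMA verify that $O(\irr(V_{\Lambda_g}^{\hat g}))$ acts transitively on $\mathcal{S}_g$ — equivalently that $|\mathcal{S}_g|$ divides $|O(\irr(V_{\Lambda_g}^{\hat g}))|$ and that a single orbit exhausts $\mathcal{S}_g$ — and compute the stabilizer of a chosen base point $V_{\Lambda_g}(1)\in\mathcal{S}_g$, reading off its shape for the last column of Table \ref{T:quadVOA}. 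The transitivity claim here is about the \emph{full} orthogonal group, so it is strictly weaker than (and implied by) nothing we have proved so far; it must be checked directly by the machine computation, but it is routine once the quadratic space is in hand.

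The only genuine subtlety — and the step I would be most careful about — is making sure the abstract description of $(\irr(V_{\Lambda_g}^{\hat g}),q)$ is transcribed correctly, in particular the mixed term $ij/n$ (or $ij/2n$) and, in the $6G,10F$ cases, the extra $\tfrac34(\overline{i+j})$ contribution in \eqref{Eq:6Gq} coming from the doubly-even defect of $\Lambda_{g^p}\cong\sqrt2 D_{12}^+$; an error there would change the isometry type of the form and hence $O(\irr(V_{\Lambda_g}^{\hat g}))$. A useful consistency check is that $O(\irr(V_{\Lambda_g}^{\hat g}))$ must contain the image of $\Aut(V_{\Lambda_g}^{\hat g})$, whose order is at least $|C_{\Aut(V_{\Lambda_g})}(\hat g)/\langle\hat g\rangle|$ from \eqref{Eq:exactC} times the index coming from the transitivity on $\mathcal{S}_g$ established in Propositions \ref{P:tranSg} and \ref{6GS}; one can also cross-check $|\irr(V_{\Lambda_g}^{\hat g})|=|\mathcal{D}(\Lambda_g)|n^2$ against Table \ref{glue}. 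Once these sanity checks pass, the proposition follows by inspection of the MAGMA output, which is what Table \ref{T:quadVOA} records.
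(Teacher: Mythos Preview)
Your plan is correct and is exactly the paper's approach: the proposition is asserted as a MAGMA computation, and your description of how to set up the finite quadratic modules from \eqref{Eq:quad}--\eqref{Eq:quado} and \eqref{Eq:gp2n}--\eqref{Eq:6Gq} and then compute orthogonal groups, orbits, and stabilizers is precisely what is done. One slip to fix: in your second paragraph you swap $6E$ and $6G$ when assigning which definition of $\mathcal{S}_g$ applies---consistent with your first paragraph (and with Sections \ref{S:non-db}--\ref{S:db}), it should be \eqref{Eq:Sg} for $4C,6E,8E$ and \eqref{Eq:Sg2} for $6G,10F$. Also, the transitivity of $O(\irr(V_{\Lambda_g}^{\hat g}))$ on $\mathcal{S}_g$ is not ``implied by nothing proved so far'': it follows immediately from Propositions \ref{P:tranSg} and \ref{6GS} via the embedding $\mu$ of Proposition \ref{conj_mu}, so verifying it by machine is a redundancy (albeit a harmless one).
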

\begin{longtable}{|c|c|c|}
\caption{Orthogonal groups $O(\irr(V_{\Lambda_g}^{\hat{g}}))$}\label{T:quadVOA}
\\ \hline 
Class& $\irr(V_{\Lambda_g}^{\hat{g}})$&$O(\irr(V_{\Lambda_g}^{\hat{g}}))$ \\ \hline
$4C$& $2^24^6$&$2^{22}.\GO_7(2)$ \\
$6E$&$2^63^6$&$\GO_6^+(2)\times \GO_6^+(3)$\\ 
$6G$&$2^44^23^5$&$2^{5+4}.(\GO_4^+(2)\times \Sym_3)\times \GO_5(3)$\\
$8E$& $2.4.8^4$&$2^{12+9}.\Sym_6$ \\
$10F$&$2^24^25^4$&$(Q_8:2^2).(\Sym_3\times\Sym_3)\times \GO_4^+(5)$\\
\hline \hline
Class& Stabilizer &$C_{\Aut(V_{\Lambda_g})}(\hat{g})/\langle \hat{g}\rangle$\\ \hline
$4C$&$2^{16+5}.\Sym_6$&$[2^{20}].\Sym_6$\\
$6E$&$6^4.(\GO_4^+(2)\times \GO_4^+(3))$&$6^4.(\GO_4^+(2)\times \GO_4^+(3))$\\
$6G$&$2^{5+4}.(GO_4^+(2)\times2)\times 3^3.(2\times\Sym_4)$&$2^3.6^3.(2\times \Sym_4\times\Alt_4^2).2$\\
$8E$&$(2^6\times 4^2).2^6.\Sym_3$&$[2^{15}].\Sym_3$\\
$10F$&$(Q_8:2^2).(\Sym_3\times 2)\times 5^2.\Dih_8$&$2^2.10^2.(\Dih_8^2)$\\
\hline
\end{longtable}

By Proposition \ref{P:tranSg} (resp. Proposition \ref{6GS}), $\Aut(V_{\Lambda_g}^{\hat{g}})$ acts transitively on $\mathcal{S}_g$ if $g\in 4C,6G,8E$ (resp. $g\in6E,10F$).
Hence by Propositions \ref{P:orthg}, the index of $\Aut(V_{\Lambda_g}^{\hat{g}})$ in $O(\irr(V_{\Lambda_g}^{\hat{g}}))$ is equal to that of $C_{\Aut(V_{\Lambda_g})}(\hat{g})/\langle\hat{g}\rangle$ in the stabilizer of an element of $\mathcal{S}_g$ in $O(\irr(V_{\Lambda_g}^{\hat{g}}))$; the index is $2,1,4,2,3$ if $g\in 4C,6E,6G,8E,10F$, respectively;
By Propositions \ref{P:stabVL1} and \ref{P:tranSg}, the stabilizer of an element of $\mathcal{S}_g$ in $\Aut(V_{\Lambda_g}^{\hat{g}})$ is isomorphic to $C_{\Aut(V_{\Lambda_g})}(\hat{g})/\langle \hat{g}\rangle$.

By using MAGMA, we can determine the subgroups $G$ of
$O(\irr(V_{\Lambda_g}^{\hat{g}}) )$ satisfying 
these conditions:
\begin{enumerate}[(1)]
\item the index $|O(\irr(V_{\Lambda_g}^{\hat{g}}) ):G|$ is $2,1,4,2,3$ if $g\in 4C,6E,6G,8E,10F$, respectively;
\item $G$ acts transitively on $\mathcal{S}_g$;
\item the stabilizer of an element in $\mathcal{S}_g$ in $G$ is isomorphic
to that of $C_{\Aut(V_{\Lambda_g})}(\hat{g})/\langle\hat{g}\rangle$ in Table \ref{T:quadVOA}.
\end{enumerate}
It turns out that the subgroups satisfying the above conditions are uniquely determined
up to conjugation in $O(\mathrm{Irr}(V_{\Lambda_g}^{\hat{g}}))$ in each case and  we obtain the following theorem.
\begin{theorem}\label{T:main} Let $g\in O(\Lambda)$.
\begin{enumerate}[{\rm (1)}]
\item If $g$ belongs to the conjugacy class $4C$, then $\aut(V_{\Lambda_g}^{\hat{g}})$ is isomorphic to the index $2$ subgroup of the full orthogonal group 
of $O(\irr(V_{\Lambda_g}^{\hat{g}}))$ and has the shape $$2^{21}.\GO_7(2).$$ 
\item If $g$ belongs to the conjugacy class $6E$, then $\aut(V_{\Lambda_g}^{\hat{g}})$ is isomorphic to the full orthogonal group 
of $O(\irr(V_{\Lambda_g}^{\hat{g}}))$ and has the shape  $$\GO_6^+(2)\times \GO_6^+(3).$$ 
\item If $g$ belongs to the conjugacy class $6G$, then $\aut(V_{\Lambda_g}^{\hat{g}})$ is isomorphic to an index $4$ subgroup of the full orthogonal group 
of $O(\irr(V_{\Lambda_g}^{\hat{g}}))$ and has the shape  
$$ 2^{5+4}.(\Sym_3 \times \Sym_3 \times \Sym_3) \times \Omega_5(3).2.$$
\item If $g$ belongs to the conjugacy class $8E$, then $\aut(V_{\Lambda_g}^{\hat{g}})$ is isomorphic to the index $2$ subgroup of the full orthogonal group of $O(\irr(V_{\Lambda_g}^{\hat{g}}))$ and has the shape $$2^{11+9}.\Sym_6.$$
\item If $g$ belongs to the conjugacy class $10F$, then $\aut(V_{\Lambda_g}^{\hat{g}})$ is isomorphic to an index $3$ subgroup of the full orthogonal group 
of $O(\irr(V_{\Lambda_g}^{\hat{g}}))$ and has the shape $$
(Q_8:2^2).(2\times \Sym_3)\times \GO_4^+(5).$$ 
\end{enumerate}
\end{theorem}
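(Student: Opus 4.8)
The plan is to use the injection $\mu:\aut(V_{\Lambda_g}^{\hat{g}})\hookrightarrow O(\irr(V_{\Lambda_g}^{\hat{g}}))$ of Proposition \ref{conj_mu} and to pin down its image by combining the orbit--stabilizer theorem with the transitivity results already established and a finite subgroup search inside $O(\irr(V_{\Lambda_g}^{\hat{g}}))$.

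First I would assemble the inputs. By Proposition \ref{P:tranSg} (for the classes $4C,6E,8E$) and by Proposition \ref{6GS} (for $6G,10F$), the group $\aut(V_{\Lambda_g}^{\hat{g}})$ acts transitively on $\mathcal{S}_g$, and by Proposition \ref{P:orthg} so does the whole group $O(\irr(V_{\Lambda_g}^{\hat{g}}))$. Since $V_{\Lambda_g}(1)\in\mathcal{S}_g$, the orbit--stabilizer theorem applied at this point to both groups gives
\[
|O(\irr(V_{\Lambda_g}^{\hat{g}})):\aut(V_{\Lambda_g}^{\hat{g}})|
=|\stab_O(V_{\Lambda_g}(1)):C_{\Aut(V_{\Lambda_g})}(\hat{g})/\langle\hat{g}\rangle|,
\]
where $\stab_O(V_{\Lambda_g}(1))$ denotes the stabilizer of $V_{\Lambda_g}(1)$ in $O(\irr(V_{\Lambda_g}^{\hat{g}}))$, and where I have used Proposition \ref{P:stabVL1}(1) together with Proposition \ref{conj_mu} to identify the stabilizer of $V_{\Lambda_g}(1)$ in $\aut(V_{\Lambda_g}^{\hat{g}})$ with $C_{\Aut(V_{\Lambda_g})}(\hat{g})/\langle\hat{g}\rangle$. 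The right-hand side is an index between two groups whose shapes are recorded in the last two columns of Table \ref{T:quadVOA}, and evaluating it gives $2,1,4,2,3$ for $g\in 4C,6E,6G,8E,10F$, respectively. For the class $6E$ the index is $1$, so $\mu$ is already an isomorphism and $\aut(V_{\Lambda_g}^{\hat{g}})\cong O(\irr(V_{\Lambda_g}^{\hat{g}}))\cong\GO_6^+(2)\times\GO_6^+(3)$.

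For the remaining four classes, $\aut(V_{\Lambda_g}^{\hat{g}})$ is a subgroup $G\le O(\irr(V_{\Lambda_g}^{\hat{g}}))$ satisfying: (1) $|O(\irr(V_{\Lambda_g}^{\hat{g}})):G|$ equals the value just computed; (2) $G$ acts transitively on $\mathcal{S}_g$; (3) a point stabilizer of $G$ on $\mathcal{S}_g$ is isomorphic to $C_{\Aut(V_{\Lambda_g})}(\hat{g})/\langle\hat{g}\rangle$ as listed in Table \ref{T:quadVOA}. Moreover, by \eqref{Eq:exactC} the group $G$ contains a fixed copy of $C_{\Aut(V_{\Lambda_g})}(\hat{g})/\langle\hat{g}\rangle$, so the search may be restricted to the finitely many subgroups of $O(\irr(V_{\Lambda_g}^{\hat{g}}))$ lying above that copy. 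I would then run a MAGMA computation: realize $O(\irr(V_{\Lambda_g}^{\hat{g}}))$ together with its action on the explicit set $\mathcal{S}_g$ of \eqref{Eq:Sg} or \eqref{Eq:Sg2}, enumerate these intermediate subgroups of the prescribed index, impose conditions (2) and (3), and verify that exactly one conjugacy class of such subgroups survives. Uniqueness then forces $\aut(V_{\Lambda_g}^{\hat{g}})$ into that class, and reading off its structure gives the shapes $2^{21}.\GO_7(2)$, $2^{5+4}.(\Sym_3\times\Sym_3\times\Sym_3)\times\Omega_5(3).2$, $2^{11+9}.\Sym_6$ and $(Q_8{:}2^2).(2\times\Sym_3)\times\GO_4^+(5)$ claimed in the statement.

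The main obstacle is keeping the subgroup enumeration feasible, since $O(\irr(V_{\Lambda_g}^{\hat{g}}))$ can be large --- for instance $2^{22}.\GO_7(2)$ for $4C$ and $2^{12+9}.\Sym_6$ for $8E$. For $6G$ and $10F$ I would first exploit the decomposition \eqref{Eq:6G023g} of $O(\irr(V_{\Lambda_g}^{\hat{g}}))$ as $O(O_2(\irr(V_{\Lambda_g}^{\hat{g}})))\times O(O_p(\irr(V_{\Lambda_g}^{\hat{g}})))$, so that the search factors through the two coprime pieces while $\mathcal{S}_g$ factors as $\mathcal{S}_g(2)\times\mathcal{S}_g(p)$; and in every case the restriction to subgroups lying above the known copy of $C_{\Aut(V_{\Lambda_g})}(\hat{g})/\langle\hat{g}\rangle$ keeps the candidate list short. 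The residual task --- matching the group-theoretic data returned by MAGMA with the ATLAS names quoted in the theorem --- is routine but has to be carried out with care.
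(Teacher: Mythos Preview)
Your proposal is correct and follows essentially the same approach as the paper: inject $\aut(V_{\Lambda_g}^{\hat{g}})$ into $O(\irr(V_{\Lambda_g}^{\hat{g}}))$ via Proposition \ref{conj_mu}, compute its index by comparing point stabilizers on $\mathcal{S}_g$ (using transitivity from Propositions \ref{P:tranSg} and \ref{6GS} together with Proposition \ref{P:stabVL1}), and then run a MAGMA search for subgroups of the correct index satisfying the transitivity and stabilizer conditions, which yields a unique conjugacy class in each case. Your additional remarks on restricting the search to overgroups of the known copy of $C_{\Aut(V_{\Lambda_g})}(\hat{g})/\langle\hat{g}\rangle$ and exploiting the factorization \eqref{Eq:6G023g} for $6G$ and $10F$ are sensible implementation details that the paper leaves implicit.
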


\begin{remark}\label{R:twist} By \eqref{Eq:Sg}, \eqref{Eq:Sg2} and Propositions \ref{P:tranSg} and \ref{6GS}, if the conjugacy class of $g\in O(\Lambda)$ is $4C$, $6E$, $6G$, $8E$ or $10F$, then $V_\Lambda^{\hat{g}}$ has an automorphism which sends $V_{\Lambda_g}(1)$ to an irreducible $V_{\Lambda_g}^{\hat{g}}$-module of twisted type.
\end{remark}

\end{document}